\documentclass[USenglish,onecolumn]{article}

\usepackage[utf8]{inputenc}
\usepackage[big]{dgruyter}

%
\usepackage{amsfonts}
\usepackage{amsmath}
\usepackage{amsthm}
\usepackage{sidecap}
\usepackage[mathscr]{euscript}
\DeclareMathOperator{\diam}{diam}
\DeclareMathOperator*{\esup}{ess\,sup}
\newcommand{\fA}{{\mathfrak A}}
\newcommand{\sG}{\mathscr{G}}
\newcommand{\sH}{\mathscr{H}}
\newcommand{\sI}{\mathscr{I}}
\newcommand{\sK}{{\mathscr K}}
\newcommand{\sU}{{\mathscr U}}
\newcommand{\K}{{\mathbb K}}
\newcommand{\N}{{\mathbb N}}
\newcommand{\R}{{\mathbb R}}

\renewcommand{\d}{\,{\mathrm d}}
\newcommand{\eps}{\varepsilon}
\newcommand{\tm}{\times}
\newcommand{\intoo}[1]{\left(#1\right)}				
\newcommand{\intcc}[1]{\left[#1\right]}				
\newcommand{\set}[1]{\left\{#1\right\}}				
\newcommand{\abs}[1]{\left|#1\right|}				
\newcommand{\norm}[1]{\left\|#1\right\|}			
\newcommand{\fall}{\quad\text{for all }}

\newtheorem*{hypo}{Hypothesis}
%
\theoremstyle{definition}

%
\theoremstyle{remark}
\newtheorem{rem}{Remark}[section]
\newtheorem{ex}{Example}[section]
\theoremstyle{plain}
\newtheorem{thm}{Theorem}[section]
\newtheorem{cor}[thm]{Corollary}
\newtheorem{lem}[thm]{Lemma}
\newtheorem{prop}[thm]{Proposition}
\newcommand{\cref}[1]{Cor.~\ref{#1}}

\newcommand{\eref}[1]{Ex.~\ref{#1}}

\newcommand{\fref}[1]{Fig.~\ref{#1}}
\newcommand{\lref}[1]{Lemma~\ref{#1}}
\newcommand{\pref}[1]{Prop.~\ref{#1}}
\newcommand{\rref}[1]{Rem.~\ref{#1}}
\newcommand{\sref}[1]{Sect.~\ref{#1}}
\newcommand{\tref}[1]{Thm.~\ref{#1}}
\numberwithin{equation}{section}
\begin{document}
\articletype{Research Article{\hfill}}
\author[1]{Christian P\"otzsche}
\affil[1]{Institut f\"ur Mathematik, Universit\"at Klagenfurt, Universit\"atsstra{\ss}e 65--67, 9020 Klagenfurt, Austria, E-mail: christian.poetzsche@aau.at}
\title{\huge Urysohn and Hammerstein operators\\ on H\"older spaces}
\runningtitle{Urysohn and Hammerstein operators on H\"older spaces}
\begin{abstract}
	{We present an application-oriented approach to Urysohn and Hammerstein integral operators acting between spaces of H\"older continuous functions over compact metric spaces. These nonlinear mappings are formulated by means of an abstract measure theoretical integral involving a finite measure. This flexible setting creates a common framework to tackle both such operators based on the Lebesgue integral like frequently met in applications, as well as e.g.\ their spatial discretization using stable quadrature/cubature rules (Nystr\"om methods). Under suitable Carath{\'e}odory conditions on the kernel functions, properties like well-definedness, boundedness, (complete) continuity and continuous differentiability are established. Furthermore, the special case of Hammerstein operators is understood as composition of Fredholm and Nemytskii operators. While our differentiability results for Urysohn operators appear to be new, the section on Nemytskii operators has a survey character. Finally, an appendix provides a rather comprehensive account summarizing the required preliminaries for H\"older continuous functions defined on metric spaces.}
\end{abstract}
\keywords{Urysohn integral operator, Hammerstein integral operator, Nemytskii operator, nonlinear operator, H\"older continuity, Lipschitz continuity}
\classification[MSC]{Primary: 47H30, Secondary: 45P05; 45G15}

\DOI{DOI}
\startpage{1}
\received{\today}
\revised{..}
\accepted{..}

\journalyear{2021}
\journalvolume{1}
%
\maketitle
\section{Introduction}
This treatise is devoted to Urysohn operators, a class of nonlinear integral operators arising in various contexts of nonlinear analysis \cite{bardaro:etal:03,khatskevich:shoiykhet:94,martin:76,pathak:18,precup:02}, as right-hand sides of certain integrodifferential (Barbashin) equations \cite{appell:kalitvin:zabrejko:00}, as well as in recent applications from control theory \cite{huseyin:guseinov:18}, mathematical biology \cite{amar:jeribi:mnif:08}, economic theory \cite{edmond:08} (integral over an unbounded domain) or system identification \cite{poluektov:polar:20} (sums as integrals). Urysohn operators are traditionally well-studied when acting between spaces of continuous functions over a compact domain \cite[pp.~164ff, Sect.~V.3]{martin:76}, \cite[pp.~35--37, Sect.~3.1]{precup:02} or \cite[App.~B.2]{poetzsche:18a}, spaces of integrable functions \cite{krasnoselskii:etal:76} with possibly different exponents, or in mixed form \cite[pp.~175ff]{martin:76}. In such a set-up, their well-definedness and continuity is addressed e.g.\ in \cite[pp.~172ff]{martin:76}, \cite[p.~85]{pathak:18}, while conditions yielding that they are set contractions w.r.t.\ ambient measures of non-compactness can be found in \cite[pp.~227ff]{akhmerov:etal:92} (for $L^p$-spaces). Both necessary and sufficient conditions for the complete continuity of Urysohn operators between different function spaces are given in \cite{misyurkeev:91}. Furthermore, \cite[pp.~162--298]{vaeth:00} provides an extensive analysis of such mappings between abstract ideal spaces; see also \cite{vaeth:97}. Properties of Urysohn operators over compact intervals having values in a real Banach space are discussed in \cite[pp.~54--90, Sect.~2.1]{guo:lakshmikantham:liu:96} and differentiability conditions were given in \cite{durdil:67} (see also \cite[pp.~41ff]{khatskevich:shoiykhet:94} or \cite[pp.~417ff, Sect.~20]{krasnoselskii:etal:76} in $L^p$-spaces). Finally, we would like to point out the paper \cite{krukowski:przeradzki:16} containing complete continuity results for Urysohn operators on the continuous functions over merely locally compact (and possibly unbounded) domains. 

A highly relevant special case is given in terms of Hammerstein operators \cite{khatskevich:shoiykhet:94,krasnoselskii:etal:76,martin:76,pathak:18}. Our given approach tackles them as composition of (linear) Fredholm integral operators determined by an integral kernel \cite{fenyo:stolle:82,hackbusch:95,kress:14} with (nonlinear) Nemytskii operators \cite{appell:zabrejko:90,chiappinelli:nugari:95,drabek:75,goebel:sachweh:99,matkowska:84,matkowski:09,nugari:93,pathak:18}. Classically Hammerstein operators arise in fixed point problems related to nonlinear boundary value problems \cite[pp.~177ff, Sect.~V.5]{martin:76} or \cite[p.~71, Thm.~5.5]{precup:02}, where the kernel is a corresponding Green's function. A more recent application are integrodifference equations originating in theoretical ecology \cite{kot:schaeffer:86,lutscher:19}, where the kernel models the spatial dispersal of species over a habitat, while the Nemytskii operator describes their growth phase. The classical $L^p$-theory of Hammerstein operators is covered for instance in \cite[p.~84]{pathak:18} or \cite[pp.~68ff, Sect.~5.3]{precup:02}. 

The paper at hand supplements the above contributions. We provide a comprehensive approach to Urysohn operators acting between possibly different spaces of H\"older continuous functions over compact metric spaces. We restrict to H\"older spaces with exponents $\alpha\leq 1$, i.e.\ the functions under consideration are not necessarily differentiable with H\"older continuous derivatives of positive order. This endows us with a wide scale of Banach spaces whose elements range from nowhere differentiable to Lipschitz functions, being differentiable almost everywhere. On the one hand, an early contribution to this area is the note \cite{pachale:59} addressing well-definedness and complete continuity of general Urysohn operators. On the other hand, H\"older spaces are meanwhile widely used when dealing with linear integral operators having singular kernels \cite[pp.~103ff, Ch.~7]{kress:14} or in the field of (quasilinear) elliptic boundary value problems \cite{gilbarg:trudinger:01}; moreover, \cite{saiedinezhad:18} investigates nonlinear integral equations in H\"older spaces. Our motivation, nevertheless, is different. It rather comes from the numerical analysis of integral equations \cite{atkinson:92} and the numerical dynamics of integrodifference equations \cite{kot:schaeffer:86,lutscher:19,poetzsche:18a}. In the latter context one aims to show that such infinite-dimensional dynamical systems given by the iterates of integral operators share the long term dynamics with their spatial discretizations. Certain problems in this area require to establish that Fr{\'e}chet derivatives of Urysohn operators and of their spatial discretization converge to each other in the operator norm. Among the techniques for the numerical solution of integral equations this can be justified for semi-discretizations of projection or degenerate kernel type, cf.\ \cite{atkinson:92,hackbusch:95,kress:14} and \cite{poetzsche:18a}. However, uniform convergence is not feasible when working with full discretizations of Nystr\"om type on the continuous functions (see e.g.\ \cite[p.~225, Thm.~12.8]{kress:14}). In contrast, when working with H\"older spaces appropriate estimates can be established \cite{poetzsche:20}. 

Having applications from theoretical ecology to numerical dynamics in mind, it is advantageous to establish a rather flexible setting we are aiming to provide here: First, we consider vector-valued operators (in finite dimensions though), which arise in ecological models describing various interacting species. Second, we allow general measure theoretical integrals induced by a finite measure such that both integral operators based on the Lebesgue integral, as well as their spatial discretization using e.g.\ Nystr\"om methods fit into a common framework (see \eref{exnyst}). For this reason we content ourselves to provide sufficient conditions guaranteeing that an integral operator is well-defined, bounded, (completely) continuous or differentiable. Necessary conditions for the above properties exist for operators on compact intervals, but are beyond the scope of this paper. 

Our presentation is subdivided into three parts: In \sref{sec2} we provide conditions of Carath{\'e}odory type on the kernel functions such that the associated Urysohn operators are well-defined, bounded, (completely and H\"older) continuous, resp.\ continuously differentiable. We successively study such operators, first having values in the continuous, and second in the H\"older functions. In particular, a subsection is devoted to convolutive Urysohn operators $\tilde\sU$, where H\"older continuity of the arguments $u$ extends to the values $\tilde\sU(u)$. The \sref{sec3} on Hammerstein operators follows a similar scheme. These mappings are compositions of Fredholm and Nemytskii operators. Since Nemytskii operators between H\"older spaces have rather degenerate mapping and differentiability properties \cite[Ch.~7]{appell:zabrejko:90}, we retreat to the case that they map into the continuous functions. H\"older continuity of the images is then guaranteed by appropriate assumptions on the kernel of the subsequent Fredholm operator. Addressing well-studied objects, the \sref{sec32} on Nemytskii operators between H\"older spaces has a survey character. Finally, the App.~\ref{appA} provides a broad perspective over the class of H\"older continuous functions defined on a metric space and having values in a normed space. 

\textbf{Notation and terminology}: Let $\R_+:=[0,\infty)$ and $X,Y$ be nonempty sets. We write $F(X,Y)$ for the set of all functions $f:X\to Y$. In the setting of metric spaces $X,Y$, a subset $\Omega\subseteq X$ is called \emph{bounded}, if it has finite \emph{diameter}
\begin{displaymath}
	\diam\Omega:=\sup_{x,\bar x\in\Omega}d(x,\bar x).
\end{displaymath}
A function $f:X\to Y$ is called \emph{bounded}, it if maps bounded sets into bounded sets, i.e.\ $f(\Omega)\subseteq Y$ is bounded for every bounded $\Omega\subseteq X$, and \emph{globally bounded}, if $f(X)\subseteq Y$ is bounded. A \emph{completely continuous} mapping is continuous and maps bounded sets into relatively compact images. 

If $X,Y$ are normed spaces, then $L_k(X,Y)$, $k\in\N_0$, is the normed space of continuous $k$-linear maps from $X^k$ to $Y$, where $L_0(X,Y):=Y$ and $L(X,Y):=L_1(X,Y)$. We write $B_r(x_0,X):=\set{x\in X:\,\norm{x-x_0}<r}$ for the open and $\bar B_r(x_0,X):=\set{x\in X:\,\norm{x-x_0}\leq r}$ for the closed $r$-ball around $x_0\in X$ in $(X,\norm{\cdot})$. Norms on finite-dimensional spaces are denoted as $\abs{\cdot}$ and $B_r(x_0)$, $\bar B_r(x_0)$ are the corresponding $r$-balls. 

The remaining introduction anticipates notions from App.~\ref{appA} on H\"older spaces: A function $u:\Omega\to\R^n$ on a metric space $(\Omega,d)$ is called \emph{$\alpha$-H\"older} (with \emph{H\"older exponent} $\alpha\in(0,1]$), if it satisfies
\begin{displaymath}
	[u]_\alpha:=\sup_{\substack{x,\bar x\in\Omega\\ x\neq\bar x}}
	\frac{\abs{u(x)-u(\bar x)}}{d(x,\bar x)^\alpha}<\infty;
\end{displaymath}
the finite quantity $[u]_\alpha$ is denoted as \emph{H\"older constant} of $u$. One speaks of a \emph{H\"older continuous} function $u$, if it is $\alpha$-H\"older for some $\alpha\in(0,1)$, in case $\alpha=1$ one denotes $u$ as \emph{Lipschitz continuous} with \emph{Lipschitz constant} $[u]_1$ and for convenience we denote a continuous function also as $0$-H\"older. For the linear space of all bounded and $\alpha$-H\"older functions we write\footnote{note that $C_n^1(\Omega)=C^1(\Omega,\R^n)$ abbreviates the Lipschitz continuous and not the continuously differentiable functions} 
$
	C_n^\alpha(\Omega):=C^\alpha(\Omega,\R^n),
$
supplemented by $C_n^0(\Omega):=C^0(\Omega,\R^n)$ for the bounded, continuous functions and $C^\alpha(\Omega):=C^\alpha(\Omega,\R)$. Note that $C_n^\alpha(\Omega)$ is a Banach space w.r.t.\ the norm (cf.~\tref{thmcomplete})
\begin{displaymath}
	\norm{u}_\alpha:=
	\begin{cases}
		\sup_{x\in\Omega}\abs{u(x)},&\alpha=0,\\
		\max\set{\sup_{x\in\Omega}\abs{u(x)},[u]_\alpha},&\alpha\in(0,1].
	\end{cases}
\end{displaymath}

Throughout the remaining paper, our set-up is as follows: Let $\Omega$ and $\Omega_1$ be metric spaces. Suppose additionally that $\Omega$ is compact and can be interpreted as measure space $(\Omega,\fA,\mu)$ with $\mu(\Omega)<\infty$ whose $\sigma$-algebra $\fA$ contains the Borel sets. The notions of measurability and integrability refer to this measure space from now on. In particular, $\int_\Omega\cdot\d\mu$ stands for the abstract integral associated to the measure $\mu$ (e.g.\ \cite{cohn:80}). 

Moreover, $Z\subseteq\R^n$ denotes a nonempty subset throughout. Given a H\"older exponent $\alpha\in[0,1]$ we write
\begin{displaymath}
	U_\alpha:=\set{u:\Omega\to Z\mid u\in C_n^\alpha(\Omega)}
\end{displaymath}
for the $\alpha$-H\"older functions over $\Omega$ having values in $Z$. If $0\leq\alpha\leq\beta\leq 1$, then \tref{thmembed} guarantees the embedding $U_\beta\subseteq U_\alpha$ between the continuous and the Lipschitz continuous functions. 
\section{Urysohn integral operators}
\label{sec2}

An \emph{Urysohn operator}\footnote{also denoted as \emph{nonlinear Fredholm operator}. Another transcription is \emph{Uryson} operator} is a nonlinear integral operator of the form
\begin{align}
	\sU:U_\alpha&\to F(\Omega_1,\R^d),&
	\sU(u)&:=\int_\Omega f(\cdot,y,u(y))\d\mu(y)
	\label{udef}
\end{align}
determined by a \emph{kernel function} $f:\Omega_1\tm\Omega\tm Z\to\R^d$ and a measure $\mu$ as above. Its overall analysis is based on the following Carath{\'e}odory like conditions: 
\begin{hypo}
	Let $m\in\N_0$. With $0\leq k\leq m$ one assumes: 
	\begin{itemize}
		\item[$(U_0^k)$] The partial derivative $D_3^kf(x,y,\cdot):Z\to L_k(\R^n,\R^d)$ exists and is continuous for all $x\in\Omega_1$ and almost all $y\in\Omega$, 

		\item[$(U_1^k)$] for all $r>0$ there exists a function $h_r^k:\Omega_1^2\tm\Omega\to\R_+$, measurable in the third argument and satisfying 
		\begin{equation}
			\lim_{x\to x_0}\int_\Omega h_r^k(x,x_0,y)\d\mu(y)=0\fall x_0\in\Omega_1, 
			\label{ua00}
		\end{equation}
		so that for almost all $y\in\Omega$ the following holds: 
		\begin{align}
			\abs{D_3^kf(x,y,z)-D_3^kf(x_0,y,z)}&\leq h_r^k(x,x_0,y)
			\fall x,x_0\in\Omega_1,\,z\in Z\cap\bar B_r(0), 
			\label{ua01}
		\end{align}

		\item[$(U_2^k)$] $D_3^kf(x,\cdot,z):\Omega\to L_k(\R^n,\R^d)$ is measurable for all $x\in\Omega_1$, $z\in Z$, and suppose that for every $r>0$ there exists a function $b_r^k:\Omega_1\tm\Omega\to\R_+$ measurable in the second argument and satisfying $\esup_{\xi\in\Omega_1}\int_\Omega b_r^k(\xi,y)\d\mu(y)<\infty$, so that for almost all $y\in\Omega$ the following holds:
		\begin{align}
			\abs{D_3^kf(x,y,z)}&\leq b_r^k(x,y)
			\fall x\in\Omega_1,\,z\in Z\cap\bar B_r(0). 
			\label{ua02}
		\end{align}
	\end{itemize}
\end{hypo}
Because we are working with a general (finite) measure on $\Omega$, both spatially continuous and discrete integral operators fit into our framework: 
\begin{ex}[Lebesgue measure]\label{exleb}
	In most applications, e.g.\ \cite{amar:jeribi:mnif:08,edmond:08,kot:schaeffer:86,lutscher:19}, $\mu$ is the $\kappa$-dimensional Lebesgue measure $\lambda_\kappa$ on compact sets $\Omega\subset\R^\kappa$ yielding the Lebesgue integral in \eqref{udef} and thus
	\begin{equation}
		\sU(u)
		=
		\int_\Omega f(\cdot,y,u(y))\d\lambda_\kappa(y)
		=
		\int_\Omega f(\cdot,y,u(y))\d y:\Omega_1\to\R^d
		\label{nouryleb}
	\end{equation}
	is a spatially continuous integral operator. One clearly has $\mu(\Omega)<\infty$. 
\end{ex}
\begin{ex}[Nystr\"om methods]\label{exnyst}
	Suppose that $\Omega\subset\R^\kappa$ is a countable set $\Omega^{(l)}$, $\eta\in\Omega^{(l)}$ and $w_\eta$ denote non\-negative reals. Then $\mu(\Omega^{(l)}):=\sum_{\eta\in\Omega^{(l)}}w_\eta$ defines a measure on the family of countable subsets of $\R^\kappa$ and precisely the empty set has measure $0$. Moreover, the assumption $\sum_{\eta\in\Omega^{(l)}}w_\eta<\infty$ guarantees that $\mu(\Omega^{(l)})$ is finite. The resulting $\mu$-integral $\int_\Omega u\d\mu=\sum_{\eta\in\Omega^{(l)}}w_\eta u(\eta)$ leads to spatially discrete Urysohn operators
	\begin{displaymath}
		\sU(u)
		=
		\int_{\Omega^{(l)}}f(\cdot,y,u(y))\d\mu(y)
		=
		\sum_{\eta\in\Omega^{(l)}}w_\eta f(\cdot,\eta,u(\eta)):\Omega_1\to\R^d, 
	\end{displaymath}
	which cover \emph{Nystr\"om methods} with \emph{nodes} $\eta$ and \emph{weights} $w_\eta$ as used for numerical approximations of spatially continuous integral operators \eqref{nouryleb}, cf.\ \cite[Sect.~3]{atkinson:92}, \cite[pp.~128ff, Sect.~4.7]{hackbusch:95} or \cite[pp.~219ff, Ch.~12]{kress:14} (the latter two references address linear operators only). Alternatively, such mappings arise in theoretical ecology by means of models for populations spreading between finitely many different patches (\emph{metapopulation models}, see \cite[Example~1]{kot:schaeffer:86}). 
\end{ex}
\begin{ex}[evaluation map]
	In case of singletons $\Omega=\set{\eta}$ and the measure from \eref{exnyst} one obtains that $F(\Omega,\R^n)\cong\R^n$ and the Urysohn operator \eqref{udef} becomes an evaluation map $\sU(u)=w_\eta f(\cdot,\eta,u(\eta))$, which is simply a mapping from $\R^n$ into $F(\Omega_1,\R^d)$. 
\end{ex}

\begin{rem}[differentiability on $Z$]
	We imposed no further conditions of the sets $Z\subseteq\R^n$ and therefore some remarks on the existence of the partial derivative $D_3^kf$ for $k>0$ are due:
	
	(1) For interior points of $Z$ the partial derivatives are understood in the Fr{\'e}chet sense. 

	(2) If $z_0\in Z$ is not an interior point of $Z$, then we assume that there exists a neighborhood $V\subseteq\R^n$ of $z_0$ and an extension $\bar f:\Omega_1\tm\Omega\tm(Z\cup V)\to\R^d$ such that the partial derivatives $D_3\bar f(x,y,\cdot)$ exist in $z_0$ as assumed in $(U_0^k)$. Alternatively, there is the notion of cone differentiability \cite[pp.~225--226]{deimling:85}. 
\end{rem}

Under continuity the above hypothesis can be simplified as follows:
\begin{prop}\label{propcont}
	Let $k\in\N_0$, $\Omega_1$ be compact and $Z\subseteq\R^n$ be closed. If the partial derivative $D_3^kf:\Omega_1\tm\Omega\tm Z\to L_k(\R^n,\R^d)$ exists as continuous function, then $(U_0^k,U_1^k,U_2^k)$ are satisfied and the limit relation \eqref{ua00} holds uniformly in $x_0\in\Omega_1$. 
\end{prop}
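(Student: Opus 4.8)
The plan is to verify the three Carath\'eodory conditions directly, exploiting that every relevant set is compact. Write $g:=D_3^kf$ and fix $r>0$. Since $Z$ is closed and $\bar B_r(0)\subset\R^n$ is closed and bounded, the intersection $Z\cap\bar B_r(0)$ is compact by Heine--Borel; consequently
\[
	K:=\Omega_1\tm\Omega\tm(Z\cap\bar B_r(0))
\]
is a product of compact spaces and hence compact. The continuous function $g$ is therefore both bounded and uniformly continuous on $K$, and these two facts supply all the required estimates. Condition $(U_0^k)$ is then immediate: for each fixed $(x,y)$ the restriction $g(x,y,\cdot)$ of the continuous map $g$ is continuous on $Z$.

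For $(U_2^k)$ I would set $b_r^k(x,y):=\sup_{z\in Z\cap\bar B_r(0)}\abs{g(x,y,z)}$. The measurability of $g(x,\cdot,z)$ follows because it is a continuous slice of $g$ and $\fA$ contains the Borel sets; the same reasoning, together with the standard fact that the supremum of a jointly continuous function over a compact parameter set depends continuously on the remaining variable (here using compactness of $\Omega$ for the underlying uniform continuity), shows that $b_r^k(x,\cdot)$ is continuous and hence measurable. Boundedness of $g$ on $K$ gives $b_r^k\leq M_r:=\sup_K\abs{g}<\infty$, whence $\esup_{\xi\in\Omega_1}\int_\Omega b_r^k(\xi,y)\d\mu(y)\leq M_r\mu(\Omega)<\infty$ since $\mu(\Omega)<\infty$.

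For $(U_1^k)$ I would take $h_r^k(x,x_0,y):=\sup_{z\in Z\cap\bar B_r(0)}\abs{g(x,y,z)-g(x_0,y,z)}$, so that \eqref{ua01} holds by construction and $h_r^k(x,x_0,\cdot)$ is measurable by the same supremum-of-continuous argument as above. The crux --- and the only genuinely substantive step --- is the limit relation \eqref{ua00} together with its uniformity in $x_0$. Here I would invoke uniform continuity of $g$ on $K$: given $\eps>0$ there is a $\delta>0$, depending on $\eps$ alone, such that $d(x,x_0)<\delta$ forces $\abs{g(x,y,z)-g(x_0,y,z)}<\eps$ for every $y\in\Omega$ and every $z\in Z\cap\bar B_r(0)$, since the two triples differ only in their first coordinate. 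Taking the supremum over $z$ yields $h_r^k(x,x_0,y)\leq\eps$ for all $y$ whenever $d(x,x_0)<\delta$, and integrating gives $\int_\Omega h_r^k(x,x_0,y)\d\mu(y)\leq\eps\mu(\Omega)$.

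As $\delta$ is independent of $x_0$, this establishes \eqref{ua00} uniformly in $x_0\in\Omega_1$, completing the argument. The main obstacle is thus conceptual rather than computational: recognizing that closedness of $Z$ forces compactness of $Z\cap\bar B_r(0)$, so that the whole domain $K$ is compact and uniform continuity of $g$ becomes available, transferring the modulus of continuity in the first variable into an integral estimate that is uniform across $\Omega_1$. The measurability requirements and the essential-supremum bound are then routine consequences of continuity and of $\mu(\Omega)<\infty$.
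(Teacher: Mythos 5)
Your proof is correct and follows essentially the same route as the paper: both exploit that $Z\cap\bar B_r(0)$ is compact (so $\Omega_1\tm\Omega\tm Z_r$ is compact), define $h_r^k$ and $b_r^k$ as suprema of $\abs{D_3^kf(x,y,z)-D_3^kf(x_0,y,z)}$ resp.\ $\abs{D_3^kf(x,y,z)}$ over $z\in Z_r$, and derive measurability from continuity together with $\fA\supseteq$ Borel sets and the uniform limit relation \eqref{ua00} from uniform continuity on the compact product. Your write-up merely spells out in more detail the steps the paper leaves implicit.
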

\begin{proof}
	Since $Z$ is assumed to be closed, $Z_r:=Z\cap\bar B_r(0)\subseteq\R^n$ is compact for all $r>0$. Then the continuous function $D_3^kf$ is uniformly continuous and globally bounded on each compact product $\Omega_1\tm\Omega\tm Z_r$. Moreover, since the $\sigma$-algebra $\fA$ contains the Borel sets, continuous functions are measurable. Given this, the assertions hold with the continuous functions
	\begin{align*}
		h_r^k(x,x_0,y)&:=\sup_{z\in Z_r}\abs{D_3^kf(x,y,z)-D_3^kf(x_0,y,z)},&
		b_r^k(x,y)&:=\sup_{z\in Z_r}\abs{D_3^kf(x,y,z)}. 
	\end{align*}
	This concludes the proof. 
\end{proof}
\subsection{Well-definedness and complete continuity}
We begin with basic properties of Urysohn operators \eqref{udef} and assume $\alpha\in(0,1]$: 
\begin{prop}[well-definedness of $\sU$]\label{propuwell}
	Assume that $(U_0^0,U_1^0,U_2^0)$ hold. Then an Urysohn operator $\sU:U_\alpha\to C_d^0(\Omega_1)$ is well-defined, bounded and continuous. 
\end{prop}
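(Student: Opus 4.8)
The plan is to verify the three assertions separately, fixing throughout a radius adapted to the argument. Given $u\in U_\alpha$, put $r:=\norm{u}_\alpha$; since $\sup_{y\in\Omega}\abs{u(y)}\leq\norm{u}_\alpha$ we have $u(\Omega)\subseteq Z\cap\bar B_r(0)$, which is exactly the range on which $(U_0^0,U_1^0,U_2^0)$ (with $D_3^0f=f$) provide control. For \emph{well-definedness} I would first check that, for fixed $x\in\Omega_1$, the integrand $y\mapsto f(x,y,u(y))$ is measurable. This is the classical Carath\'eodory composition argument: $u$ is continuous, hence measurable, so it is the pointwise limit of simple functions $u_m$; each $y\mapsto f(x,y,u_m(y))$ is measurable because piecewise it equals some $f(x,\cdot,z)$, measurable by $(U_2^0)$; and $f(x,y,u_m(y))\to f(x,y,u(y))$ for almost all $y$ by the continuity in the third variable from $(U_0^0)$. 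Integrability and global boundedness of the image then follow at once from $(U_2^0)$ via \eqref{ua02}: because $u(y)\in Z\cap\bar B_r(0)$,
\begin{equation*}
	\abs{\sU(u)(x)}\leq\int_\Omega b_r^0(x,y)\d\mu(y)\leq\esup_{\xi\in\Omega_1}\int_\Omega b_r^0(\xi,y)\d\mu(y)=:M_r<\infty
\end{equation*}
uniformly in $x$. Continuity of $\sU(u):\Omega_1\to\R^d$ at an arbitrary $x_0$ is obtained from $(U_1^0)$ and \eqref{ua01}: the difference $\abs{\sU(u)(x)-\sU(u)(x_0)}$ is bounded by $\int_\Omega h_r^0(x,x_0,y)\d\mu(y)$, which tends to $0$ as $x\to x_0$ by \eqref{ua00}. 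Hence $\sU(u)\in C_d^0(\Omega_1)$.

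\emph{Boundedness} of the operator is then immediate: on a bounded set $B\subseteq U_\alpha$ one may choose a single $r$ with $\norm{u}_\alpha\leq r$ for all $u\in B$, so $u(\Omega)\subseteq Z\cap\bar B_r(0)$ for every $u\in B$ and the displayed estimate gives $\norm{\sU(u)}_0\leq M_r$, i.e.\ $\sU(B)$ is bounded.

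The real work lies in \emph{continuity}, which I would argue sequentially. Let $u_n\to u$ in $U_\alpha$, fix $r$ with $\norm{u_n}_\alpha\leq r$ for all $n$, and set $\Delta_n(x):=\int_\Omega\abs{f(x,y,u_n(y))-f(x,y,u(y))}\d\mu(y)$, so that $\norm{\sU(u_n)-\sU(u)}_0\leq\sup_{x\in\Omega_1}\Delta_n(x)$. For each fixed $x$ the integrand tends to $0$ almost everywhere (continuity from $(U_0^0)$ together with the uniform convergence $u_n\to u$, which is implied by convergence in $\norm{\cdot}_\alpha$) and is dominated by $2b_r^0(x,\cdot)\in L^1(\mu)$; dominated convergence then yields $\Delta_n(x)\to0$ pointwise in $x$. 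The main obstacle is to upgrade this to convergence uniform in $x\in\Omega_1$, which is precisely what the $C_d^0$-norm demands and which a naive pointwise estimate does not deliver.

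Here $(U_1^0)$ is the decisive tool: the reverse triangle inequality, applied inside the integral and combined with \eqref{ua01}, gives the $n$-independent equicontinuity estimate
\begin{equation*}
	\abs{\Delta_n(x)-\Delta_n(x_0)}\leq 2\int_\Omega h_r^0(x,x_0,y)\d\mu(y)\fall x,x_0\in\Omega_1,
\end{equation*}
whose right-hand side tends to $0$ as $x\to x_0$ by \eqref{ua00}. Thus $\set{\Delta_n}$ is equicontinuous and uniformly bounded by $2M_r$, and an equicontinuous family converging pointwise on a compact domain converges uniformly; invoking compactness of $\Omega_1$ then forces $\sup_{x}\Delta_n(x)\to0$ and hence the continuity of $\sU$. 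I expect this last passage --- passing from the pointwise limit to control of the supremum over $x$ --- to be the genuine crux; it is also the point where compactness (or at least a uniform-in-$x_0$ strengthening of \eqref{ua00}) is really needed, the remaining pointwise estimates being routine consequences of $(U_0^0)$ and $(U_2^0)$.
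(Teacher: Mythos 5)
Your treatment of well-definedness and boundedness matches the paper's: measurability of $y\mapsto f(x,y,u(y))$ via the Carath\'eodory composition argument, the uniform bound from \eqref{ua02}, and continuity of the image from \eqref{ua01} and \eqref{ua00}. (One cosmetic point: the paper only needs $\norm{u}_0\leq r$, not $\norm{u}_\alpha\leq r$, consistent with the later remark that $\sU$ maps merely $\norm{\cdot}_0$-bounded sets to bounded sets.)

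The continuity step, however, has a genuine gap. Your strategy is: pointwise convergence $\Delta_n(x)\to0$ by dominated convergence for each fixed $x$, then equicontinuity of $\set{\Delta_n}$ from \eqref{ua01}, then ``equicontinuous $+$ pointwise convergent on a compact domain $\Rightarrow$ uniformly convergent.'' But the proposition does not assume $\Omega_1$ compact --- in the standing set-up only $\Omega$ (the integration domain) is compact, while $\Omega_1$ is an arbitrary metric space; moreover \eqref{ua00} is only assumed pointwise in $x_0$, so your equicontinuity is pointwise with a modulus depending on $x_0$. Both of the crutches you yourself identify as necessary (compactness of $\Omega_1$, or a uniform-in-$x_0$ version of \eqref{ua00}) are exactly the \emph{extra} hypotheses that appear later in \cref{corcc}(ii); they are not available here, so your final passage from pointwise to uniform convergence in $x$ does not go through. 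The paper extracts the uniformity in $x\in\Omega_1$ from the other side of the integral instead: it fixes $\eps>0$, uses absolute continuity of $\int_{\cdot}b_r^0(x,y)\d\mu(y)$ (uniformly in $x$ via the $\esup$ bound in $(U_2^0)$) to make the contribution of a small-measure set $\Omega'\subseteq\Omega$ uniformly small, and applies Egoroff's theorem on the compact, finite-measure space $\Omega$ to get $\abs{f(x,y,u_l(y))-f(x,y,u(y))}\leq\eps/(2\mu(\Omega))$ for all $x\in\Omega_1$, all $y\in\Omega\setminus\Omega'$ and all large $l$ --- the uniformity in $x$ coming from the fact that the estimate \eqref{ua02} and the convergence $u_l\to u$ live on $\Omega$, not on $\Omega_1$. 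Splitting the integral over $\Omega\setminus\Omega'$ and $\Omega'$ then bounds $\sup_{x\in\Omega_1}\Delta_l(x)$ directly, with no topological assumption on $\Omega_1$. To repair your argument you would need to replace the Arzel\`a--Ascoli-type step by such an Egoroff/uniform-integrability argument on $\Omega$.
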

\begin{proof}
	W.l.o.g.\ let $\mu(\Omega)>0$ since otherwise $\sU(u)\equiv 0$ on $U_\alpha$. 

	(I) Claim: \emph{$\sU:U_\alpha\to C_d^0(\Omega_1)$ is well-defined and bounded.}\\
	Choose $u\in U_\alpha$ and $r>0$ such that $\norm{u}_0\leq r$. Given $x,x_0\in\Omega_1$ the Carath{\'e}odory conditions $(U_0^0,U_2^0)$ yield that $f(x,\cdot,u(\cdot)):\Omega\to\R^d$ is measurable (see \cite[p.~62, Lemma~5.1]{precup:02}). From $(U_1^0)$ we conclude
	\begin{displaymath}
		\abs{\sU(u)(x)-\sU(u)(x_0)}
		\stackrel{\eqref{udef}}{\leq}
		\int_\Omega\abs{f(x,y,u(y))-f(x_0,y,u(y))}\d\mu(y)
		\stackrel{\eqref{ua01}}{\leq}
		\int_\Omega h_r^0(x,x_0,y)\d\mu(y)
		\xrightarrow[x\to x_0]{\eqref{ua00}}0
	\end{displaymath}
	for each $x_0\in\Omega_1$, which guarantees that $\sU(u)$ is continuous. Furthermore, because $(U_2^0)$ yields
	\begin{displaymath}
		\abs{\sU(u)(x)}
		\stackrel{\eqref{udef}}{\leq}
		\int_\Omega\abs{f(x,y,u(y))}\d\mu(y)
		\stackrel{\eqref{ua02}}{\leq}
		\esup_{\xi\in\Omega_1}\int_\Omega b_r^0(\xi,y)\d\mu(y)\fall x\in\Omega_1
	\end{displaymath}
	we see that $\sU(u)$ is bounded and thus $\sU(u)\in C_d^0(\Omega_1)$. In addition, $\sU$ maps bounded subsets of $U_0$ into bounded subsets of $C_d^0(\Omega_1)$. 

	(II) Claim: \emph{$\sU:U_\alpha\to C_d^0(\Omega_1)$ is continuous.}\\
	Let $u\in U_\alpha$ and $(u_l)_{l\in\N}$ be a sequence in $U_\alpha$ with $\lim_{l\to\infty}\norm{u_l-u}_0=0$ and $r>0$ sufficiently large so that $u,u_l\in\bar B_r(0,C_n^0(\Omega))$ holds for all $l\in\N$. Using $(U_0^0)$ this gives $\lim_{l\to\infty}f(x,y,u_l(y))=f(x,y,u(y))$ for all $x\in\Omega_1$ and almost all $y\in\Omega$. For each $\eps>0$ there exists a $\delta>0$ such that subsets $\tilde\Omega\subseteq\Omega$ with $\mu(\tilde\Omega)\leq\delta$ fulfill $\int_{\tilde\Omega}b_r^0(x,y)\d\mu(y)\leq\tfrac{\eps}{4}$ and Egoroff's theorem \cite[p.~87, Prop.~3.1.3]{cohn:80} guarantees that there exist a $\Omega'\subseteq\Omega$ with $\mu(\Omega')\leq\delta$ and an $L\in\N$ such that
	$
		\abs{f(x,y,u_l(y))-f(x,y,u(y))}\leq\tfrac{\eps}{2\mu(\Omega)}
	$
	for all $x\in\Omega_1$, $y\in\Omega\setminus\Omega'$ and $l\geq L$. This implies that we have pointwise convergence due to
	\begin{align*}
		\abs{[\sU(u_l)-\sU(u)](x)}
		&\stackrel{\eqref{udef}}{\leq}
		\int_{\Omega\setminus\Omega'}\abs{f(x,y,u_l(y))-f(x,y,u(y))}\d\mu(y)+
		\int_{\Omega'}\abs{f(x,y,u_l(y))-f(x,y,u(y))}\d\mu(y)\\
		&\stackrel{\eqref{ua02}}{\leq}
		\int_{\Omega\setminus\Omega'}\tfrac{\eps}{2\mu(\Omega)}\d\mu(y)+
		2\int_{\Omega'}b_r^0(x,y)\d\mu(y)
		\leq
		\tfrac{\eps}{2}+2\tfrac{\eps}{4}=\eps\fall x\in\Omega_1,\,l\geq L. 
	\end{align*}
	Passing to the supremum over $x\in\Omega_1$ yields the limit relation
	\begin{equation}
		\lim_{l\to\infty}\norm{\sU(u_l)-\sU(u)}_0=0.
		\label{ucont0}
	\end{equation}
	This shows the continuity of $\sU$. 
\end{proof}
\begin{cor}[complete continuity of $\sU$]\label{corcc}
	An Urysohn operator $\sU:U_\alpha\to C_d^0(\Omega_1)$ is completely continuous, provided one of the following holds:
	\begin{itemize}
		\item[(i)] $\alpha\in(0,1]$, 

		\item[(ii)] $\Omega_1$ is compact, $\alpha=0$, the limit relation \eqref{ua00} holds uniformly in $x_0\in\Omega_1$.
	\end{itemize}
\end{cor}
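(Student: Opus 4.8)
The plan is to invoke \pref{propuwell}, which already yields that $\sU:U_\alpha\to C_d^0(\Omega_1)$ is well-defined, bounded and continuous; complete continuity therefore reduces to showing that $\sU$ maps an arbitrary bounded set $B\subseteq U_\alpha$ to a relatively compact subset of the Banach space $C_d^0(\Omega_1)$. Fix $r>0$ with $\norm{u}_0\leq r$ for all $u\in B$. I would treat the two cases by entirely different compactness mechanisms.

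For (ii) I would apply the Arzel\`a--Ascoli theorem on the compact domain $\Omega_1$ to the family $\sU(B)\subseteq C(\Omega_1,\R^d)$. Uniform boundedness is immediate, since \pref{propuwell} shows $\sU$ carries bounded sets to bounded sets. For uniform equicontinuity I would reuse the estimate already appearing in the proof of \pref{propuwell},
\[
	\abs{\sU(u)(x)-\sU(u)(x_0)}\leq\int_\Omega h_r^0(x,x_0,y)\d\mu(y)\fall u\in B,
\]
whose right-hand side is independent of $u$. Here the extra assumption enters decisively: the plain limit \eqref{ua00} would only furnish pointwise equicontinuity, whereas the assumed uniformity of \eqref{ua00} in $x_0$ supplies a single modulus $\delta(\eps)$ valid for all base points, hence uniform equicontinuity of $\sU(B)$. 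Arzel\`a--Ascoli then gives the claim.

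For (i), where $\Omega_1$ need not be compact, I would instead exploit compactness on the domain side $\Omega$. Because $\Omega$ is compact and $\alpha>0$, a set bounded in $C_n^\alpha(\Omega)$ is uniformly bounded and equicontinuous, so Arzel\`a--Ascoli (cf.\ App.~\ref{appA}) makes $B$ relatively compact in $C_n^0(\Omega)$, i.e.\ in the sup-norm. The decisive observation is that the continuity argument in \pref{propuwell} uses only $\norm{\cdot}_0$-convergence of the arguments, so $\sU$ is in fact continuous for the sup-norm topology on its domain. Thus $\sU$ factors as the compact inclusion $B\hookrightarrow C_n^0(\Omega)$ followed by this sup-norm-continuous map, and a sequential argument finishes: any sequence in $\sU(B)$ arises from a sequence in $B$ possessing a $\norm{\cdot}_0$-convergent subsequence, along which the $\sU$-images converge in $C_d^0(\Omega_1)$; completeness of $C_d^0(\Omega_1)$ then yields relative compactness of $\sU(B)$. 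Notably this route needs no compactness of $\Omega_1$.

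The main obstacle I anticipate is the passage to the limit in case (i): the $\norm{\cdot}_0$-limit $u$ of a bounded sequence in $U_\alpha$ is again $\alpha$-H\"older (the H\"older seminorm is lower semicontinuous under pointwise convergence) but a priori takes values only in the closure of $Z$, so it may fall outside the domain $U_\alpha$ on which $\sU$ is defined. When $Z$ is closed this is vacuous and the continuity of \pref{propuwell} applies verbatim; in general I would instead verify directly that the image sequence is Cauchy, re-running the Egoroff-plus-domination estimate of \pref{propuwell} on differences $\sU(u_{l_j})-\sU(u_{l_k})$ rather than against the limit, using that all arguments stay in $Z\cap\bar B_r(0)$ and invoking the absolute continuity of the integral of $b_r^0$.
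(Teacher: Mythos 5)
Your proof is correct and follows essentially the same route as the paper: case (ii) is the Arzel\`a--Ascoli argument on $\Omega_1$ with the uniform limit relation \eqref{ua00} supplying equicontinuity of $\sU(B)$, and case (i) factors $\sU$ through the compact embedding $\sI_\alpha^0$ of \tref{thm10} followed by the $\norm{\cdot}_0$-continuous operator from \pref{propuwell}. Your additional care about the $\norm{\cdot}_0$-limit of a bounded sequence in $U_\alpha$ possibly leaving the domain when $Z$ is not closed (handled via the Cauchy estimate) is a refinement that the paper's bare appeal to the ``continuous after compact'' composition theorem silently skips.
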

\begin{proof}
	We write $\sU_0:U_0\to C_d^0(\Omega_1)$ for the operator defined in \eqref{udef}. 
	
	(I) For exponents $\alpha\in(0,1]$ we observe $\sU=\sU_0\circ\sI_\alpha^0$ with the compact embedding operator $\sI_\alpha^0$ from \eqref{noe} (cf.~\tref{thm10}). Therefore, $\sU$ inherits the claimed properties from the steps (I) and (II) of the proof to \pref{propuwell}. In particular, as composition of the continuous $\sU_0$ with the compact $\sI_\alpha^0$ it is completely continuous due to e.g.\ \cite[pp.~25--26, Thm.~2.1(2)]{precup:02}. 

	(II) Claim: \emph{If (ii) holds, then $\sU_0:U_0\to C_d^0(\Omega_1)$ is completely continuous.}\\
	In \pref{propuwell} it was shown that $\sU_0:U_0\to C_d^0(\Omega_1)$ is bounded and continuous. If $\alpha=0$ and \eqref{ua00} holds uniformly in $x_0\in\Omega_1$, then the first limit relation in the proof of \pref{propuwell}) is true uniformly in $x_0$ as well, and each image $\sU_0\bigl(U_0\cap\bar B_r(0,C_n^0(\Omega))\bigr)\subset C_d^0(\Omega_1)$ is equicontinuous. Therefore, the Arzel{\`a}-Ascoli theorem \cite[p.~31, Thm.~3.2]{martin:76} yield its relative compactness. 
\end{proof}

\begin{cor}
	Let $\Omega_1$ be compact and $Z\subseteq\R^n$ be closed. If $f:\Omega_1\tm\Omega\tm Z\to\R^d$ is continuous, then $\sU:U_\alpha\to C_d^0(\Omega_1)$ is completely continuous and uniformly continuous on each set $U_\alpha\cap\bar B_r(0,C_n^0(\Omega))$, $r>0$. 
\end{cor}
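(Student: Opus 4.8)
The plan is to deduce this corollary directly from the two preceding results, \pref{propcont} and \cref{corcc}, treating the continuity of $f$ as the hypothesis that funnels into both. First I would invoke \pref{propcont} with $k=0$: since $\Omega_1$ is compact, $Z\subseteq\R^n$ is closed, and $f=D_3^0f$ is continuous by assumption, the proposition tells us that the Carath\'eodory conditions $(U_0^0,U_1^0,U_2^0)$ all hold, \emph{and} that the limit relation \eqref{ua00} holds uniformly in $x_0\in\Omega_1$. This is the key observation: continuity of $f$ gives us not merely the hypotheses but the \emph{uniform} version of \eqref{ua00} that distinguishes case (ii) of \cref{corcc}.

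With the hypotheses $(U_0^0,U_1^0,U_2^0)$ in hand, \cref{corcc} applies. For $\alpha\in(0,1]$ complete continuity follows from case (i) without any further input. For $\alpha=0$ complete continuity follows from case (ii), precisely because $\Omega_1$ is compact and \eqref{ua00} holds uniformly in $x_0$ — the two extra demands of (ii) are exactly what \pref{propcont} just supplied. Thus complete continuity of $\sU:U_\alpha\to C_d^0(\Omega_1)$ is established for every $\alpha\in[0,1]$ in one stroke.

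It remains to address the uniform continuity on each ball $U_\alpha\cap\bar B_r(0,C_n^0(\Omega))$. Here I would set $Z_r:=Z\cap\bar B_r(0)$, which is compact since $Z$ is closed, so that $f$ is uniformly continuous on the compact product $\Omega_1\tm\Omega\tm Z_r$. The natural estimate is, for $u,\bar u$ in the ball,
\begin{displaymath}
	\norm{\sU(u)-\sU(\bar u)}_0
	\leq
	\sup_{x\in\Omega_1}\int_\Omega\abs{f(x,y,u(y))-f(x,y,\bar u(y))}\d\mu(y).
\end{displaymath}
Given $\eps>0$, uniform continuity of $f$ in its third argument (uniformly over the compact $\Omega_1\tm\Omega$) furnishes a $\delta>0$ with $\abs{f(x,y,z)-f(x,y,\bar z)}\leq\tfrac{\eps}{\mu(\Omega)}$ whenever $z,\bar z\in Z_r$ satisfy $\abs{z-\bar z}\leq\delta$; then $\norm{u-\bar u}_0\leq\delta$ forces $\abs{u(y)-\bar u(y)}\leq\delta$ for all $y$, and integrating yields $\norm{\sU(u)-\sU(\bar u)}_0\leq\eps$. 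Since $\delta$ depends only on $\eps$ and $r$, this is uniform continuity on the ball.

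The main obstacle is a bookkeeping one rather than a conceptual one: I must make sure the modulus of continuity of $f$ is uniform in $x$ and $y$ simultaneously, which is exactly guaranteed by the compactness of $\Omega_1\tm\Omega\tm Z_r$ and the resulting uniform continuity of $f$ thereon. The only subtlety worth flagging is the degenerate case $\mu(\Omega)=0$, where $\sU\equiv 0$ and every claim is trivial; otherwise the division by $\mu(\Omega)$ above is legitimate. No deeper measure-theoretic argument is needed, since continuity of $f$ has already collapsed the abstract Carath\'eodory machinery into the clean compact-space estimates above.
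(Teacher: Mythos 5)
Your proposal is correct and follows essentially the same route as the paper: \pref{propcont} supplies the Carath\'eodory conditions (together with the uniform version of \eqref{ua00}), \cref{corcc} then yields complete continuity, and uniform continuity on each ball is obtained exactly as in the paper from the uniform continuity of $f$ on the compact product $\Omega_1\tm\Omega\tm Z_r$ with $Z_r:=Z\cap\bar B_r(0)$. The only differences are cosmetic (your constant $\tfrac{\eps}{\mu(\Omega)}$ versus the paper's $\tfrac{\eps}{2\mu(\Omega)}$, and your explicit handling of the trivial case $\mu(\Omega)=0$).
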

\begin{proof}
	Due to \pref{propcont} the assumptions of \pref{propuwell} and \cref{corcc} are fulfilled. Therefore, $\sU$ is completely continuous. Hence, it remains to show the uniform continuity on bounded sets. For this purpose, let $\eps>0$ and $r>0$. Because $Z$ is closed, $Z_r:=Z\cap\bar B_r(0)\subseteq\R^n$ is compact and since the continuous function $f$ is uniformly continuous on the compact $\Omega_1\tm\Omega\tm Z_r$, there exists a $\delta>0$ such that for all $x\in\Omega_1$, $y\in\Omega$ and $z,\bar z\in Z_r$ one has $\abs{z-\bar z}<\delta\Rightarrow\abs{f(x,y,z)-f(x,y,\bar z)}<\tfrac{\eps}{2\mu(\Omega)}$. Let $u,\bar u\in U_\alpha\cap\bar B_r(0,C_n^0(\Omega))$ with $\norm{u-\bar u}_0<\delta$. Then the inclusions $u(y),\bar u(y)\in Z_r$ and the estimate $\abs{u(y)-\bar u(y)}<\delta$ hold for all $y\in\Omega$. They yield
	$$
		\abs{\sU(u)(x)-\sU(u_0)(x)}
		\stackrel{\eqref{udef}}{\leq}
		\int_\Omega\abs{f(x,y,u(y))-f(x,y,u_0(y))}\d\mu(y)
		\leq
		\tfrac{\eps}{2}\fall x\in\Omega_1
	$$
	and passing to the least upper bound over $x\in\Omega_1$ results in $\norm{\sU(u)-\sU(u_0)}_0<\eps$, i.e.\ $\sU$ is uniformly continuous on each set $U_\alpha\cap\bar B_r(0,C_n^0(\Omega))$. 
\end{proof}

The subsequent assumption allows us to infer H\"older continuity of $\sU$. 
\begin{hypo}
	Let $\vartheta\in(0,1]$. 
	\begin{itemize}
		\item[$(U_0')$] For every $r>0$ there exists a function $l_r:\Omega_1\tm\Omega\to\R_+$, measurable in the second argument and satisfying $\esup_{\xi\in\Omega_1}\int_\Omega l_r(\xi,y)\d\mu(y)<\infty$, so that for almost all $y\in\Omega$ the following holds: 
		\begin{align}
			\abs{f(x,y,z)-f(x,y,\bar z)}&\leq l_r(x,y)\abs{z-\bar z}^\vartheta
			\fall x\in\Omega_1,\,z,\bar z\in Z\cap\bar B_r(0). 
			\label{ua04}
		\end{align}
	\end{itemize}
\end{hypo}
Obviously, the condition $(U_0')$ is sufficient for $(U_0^0)$. 

\begin{cor}\label{corulip}
	If additionally $(U_0')$ holds, then $\sU:U_\alpha\to C_d^0(\Omega_1)$ is H\"older on bounded sets, that is 
	\begin{equation}
		\intcc{\sU|_{U_\alpha\cap\bar B_r(0,C_n^0(\Omega))}}_\vartheta
		\leq
		\esup_{\xi\in\Omega_1}\int_\Omega l_r(\xi,y)\d\mu(y)\fall r>0. 
		\label{corulip1}
	\end{equation}
\end{cor}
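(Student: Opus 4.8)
The plan is to bound the sup-norm distance $\norm{\sU(u)-\sU(\bar u)}_0$ pointwise in $x\in\Omega_1$ and then read off the H\"older constant. Since $(U_0')$ is stronger than $(U_0^0)$ and the conditions $(U_1^0,U_2^0)$ underlying \pref{propuwell} remain in force, that proposition already supplies a well-defined operator $\sU:U_\alpha\to C_d^0(\Omega_1)$, so all quantities below are meaningful. Note also that, in accordance with the ball being measured in $C_n^0(\Omega)$, the constant $\intcc{\cdot}_\vartheta$ is understood with respect to the sup-norm distance on the domain.

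First I would fix $r>0$ and take arbitrary $u,\bar u\in U_\alpha\cap\bar B_r(0,C_n^0(\Omega))$. Membership in the $r$-ball gives $\abs{u(y)}\leq\norm{u}_0\leq r$, and likewise for $\bar u$, so that $u(y),\bar u(y)\in Z\cap\bar B_r(0)$ for every $y\in\Omega$; this is exactly the range where \eqref{ua04} applies. For fixed $x\in\Omega_1$ I would start from the same pointwise estimate used in \pref{propuwell}, insert the H\"older bound \eqref{ua04} with $z=u(y)$ and $\bar z=\bar u(y)$, and majorise $\abs{u(y)-\bar u(y)}^\vartheta\leq\norm{u-\bar u}_0^\vartheta$ uniformly in $y$:
\begin{displaymath}
	\abs{\sU(u)(x)-\sU(\bar u)(x)}
	\leq
	\int_\Omega l_r(x,y)\abs{u(y)-\bar u(y)}^\vartheta\d\mu(y)
	\leq
	\norm{u-\bar u}_0^\vartheta\,\esup_{\xi\in\Omega_1}\int_\Omega l_r(\xi,y)\d\mu(y).
\end{displaymath}
Passing to the supremum over $x\in\Omega_1$ turns the left-hand side into $\norm{\sU(u)-\sU(\bar u)}_0$; dividing by $\norm{u-\bar u}_0^\vartheta$ for $u\neq\bar u$ and taking the supremum over all such pairs then yields precisely \eqref{corulip1}.

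The computation is routine, and the one point requiring attention is the final inequality above, where the $x$-dependent integral $\int_\Omega l_r(x,y)\d\mu(y)$ is replaced by its essential supremum over $\Omega_1$. This is the same passage from a pointwise-in-$x$ estimate to a global constant that already occurs in the boundedness part of \pref{propuwell}, and it rests on the $\esup$-convention fixed in $(U_0')$. Apart from this, the only thing to verify is that restricting to the $r$-ball keeps all arguments inside $Z\cap\bar B_r(0)$, which justifies the use of \eqref{ua04} with the uniform majorant $\norm{u-\bar u}_0^\vartheta$.
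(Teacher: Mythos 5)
Your proposal is correct and follows essentially the same route as the paper's own proof: invoke \pref{propuwell} for well-definedness via $(U_0')\Rightarrow(U_0^0)$, apply \eqref{ua04} pointwise in $x$, majorise $\abs{u(y)-\bar u(y)}^\vartheta$ by $\norm{u-\bar u}_0^\vartheta$, pass to the essential supremum of the kernel integral, and finally take the supremum over $x\in\Omega_1$. No gaps.
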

\begin{proof}
	Since $(U_0')$ implies $(U_0^0)$, we obtain from \pref{propuwell} that $\sU:U_\alpha\to C_d^0(\Omega_1)$ is well-defined. Given $r>0$, for $u,\bar u\in U_\alpha\cap\bar B_r(0,C_n^0(\Omega))$ we derive from $(U_0')$ that
	\begin{align*}
		\abs{[\sU(u)-\sU(\bar u)](x)}
		&\stackrel{\eqref{udef}}{\leq}
		\int_\Omega\abs{f(x,y,u(y))-f(x,y,\bar u(y))}\d\mu(y)
		\stackrel{\eqref{ua04}}{\leq}
		\int_\Omega l_r(x,y)\d\mu(y)\norm{u-\bar u}_0^\vartheta\\
		&\leq
		\esup_{\xi\in\Omega_1}\int_\Omega l_r(\xi,y)\d\mu(y)\norm{u-\bar u}_0^\vartheta
		\fall x\in\Omega_1
	\end{align*}
	and consequently $\norm{\sU(u)-\sU(\bar u)}_0\leq\esup_{\xi\in\Omega_1}\int_\Omega l_r(\xi,y)\d\mu(y)\norm{u-\bar u}_0^\vartheta$ after passing to the least upper bound over all $x\in\Omega_1$.
\end{proof}

Let us proceed to Urysohn operators having values in H\"older spaces with positive exponent $\beta$ rather than in $C_d^0(\Omega_1)$. This requires to sharpen our above assumptions beyond $(U_0^k)$: 
\begin{hypo}
	Let $m\in\N_0$ and $\beta\in(0,1]$. With $0\leq k\leq m$ one assumes that for every $r>0$ there exists
	\begin{itemize}
		\item[$(\bar U_1^k)$] an integrable function $\bar h_r^k:\Omega\to\R_+$, so that for almost all $y\in\Omega$ the following holds: 
		\begin{align}
			\abs{D_3^kf(x,y,z)-D_3^kf(\bar x,y,z)}&\leq\bar h_r^k(y)d(x,\bar x)^\beta
			\text{ for all }x,\bar x\in\Omega_1,\,z\in Z\cap\bar B_r(0), 
			\label{ub01}
		\end{align}

		\item[$(\bar U_3^k)$] a function $c_r^k:\R_+\tm\Omega\to\R_+$ measurable in the second argument with $\lim_{\delta\searrow 0}\int_\Omega c_r^k(\delta,y)\d\mu(y)=0$, so that for almost all $y\in\Omega$ and all $\delta>0$ the following holds:
		\begin{equation}
			\hspace*{-5mm}
			\abs{z-\bar z}\leq\delta
			\Rightarrow
			\abs{D_3^kf(x,y,z)-D_3^kf(x,y,\bar z)-\intcc{D_3^kf(\bar x,y,z)-D_3^kf(\bar x,y,\bar z)}}
			\leq
			c_r^k(\delta,y)d(x,\bar x)^\beta
			\label{ub03s}
		\end{equation}
		for all $x,\bar x\in\Omega_1$, $z,\bar z\in Z\cap\bar B_r(0)$. 
	\end{itemize}
\end{hypo}
\begin{rem}\label{rem22}
	(1) Note that $(\bar U_1^k)$ implies $(U_1^k)$ with the function $h_r^k(x,x_0,y):=\bar h_r^k(y)d(x,x_0)^\beta$ and in particular the limit relation \eqref{ua00} holds uniformly in $x_0\in\Omega_1$. 

	(2) Since it might be tedious to verify the implication \eqref{ub03s}, we note some sufficient conditions:
	\begin{itemize}
		\item If $Z\subseteq\R^n$ is convex, then $(\bar U_1^{k+1})$ implies $(\bar U_3^k)$ with $c_r^k(\delta,y):=\delta\bar h_r^{k+1}(y)$. Indeed, the Mean Value Theorem \cite[p.~341, Thm.~4.2]{lang:93} yields for almost all $y\in\Omega$ that
		\begin{align*}
			&
			\abs{D_3^kf(x,y,z)-D_3^kf(x,y,\bar z)-\intcc{D_3^kf(\bar x,y,z)-D_3^kf(\bar x,y,\bar z)}}\\
			=&
			\abs{\int_0^1D_3^{k+1}f(x,y,\bar z+\theta(z-\bar z))-D_3^{k+1}f(\bar x,y,\bar z+\theta(z-\bar z))\d\theta\intcc{z-\bar z}}\\
			\stackrel{\eqref{ub01}}{\leq}&
			\,\bar h_r^{k+1}(y)\abs{z-\bar z} d(x,\bar x)^\beta
			\fall x,\bar x\in\Omega_1,\,z,\bar z\in Z\cap\bar B_r(0). 
		\end{align*}

		\item Let $\Omega_1\subset\R^{\nu}$ be bounded and convex. Assume for all $r>0$ there is a function $\gamma_r^k:\R_+\tm\Omega\to\R_+$ measurable in the second argument satisfying $\lim_{\delta\searrow 0}\int_\Omega\gamma_r^k(\delta,y)\d\mu(y)=0$, so that for almost all $y\in\Omega$ and all $\delta>0$ the following holds: $D_1D_3f(\cdot,y,z):\Omega_1\to L_(\R^\nu,L_k(\R^n,\R^d))$ exists and is continuous for all $z\in Z$, almost all $y\in\Omega$, 
		$$
			\abs{z-\bar z}<\delta
			\quad\Rightarrow\quad
			\abs{D_1D_3^kf(x,y,z)-D_1D_3^kf(x,y,\bar z)}
			\leq
			\gamma_r^k(\delta,y)
			\fall x,\bar x\in\Omega_1
		$$
		and $z,\bar z\in Z\cap\bar B_r(0)$. If $\abs{z-\bar z}<\delta$, then the Mean Value Theorem \cite[p.~341, Thm.~4.2]{lang:93} yields
		\begin{align*}
			&
			\abs{D_3^kf(x,y,z)-D_3^kf(x,y,\bar z)-\intcc{D_3^kf(\bar x,y,z)-D_3^kf(\bar x,y,\bar z)}}\\
			=&
			\abs{
			\int_0^1D_1D_3^kf(\bar x+\theta(x-\bar x),y,z)-D_1D_3^kf(\bar x+\theta(x-\bar x),y,\bar z)\d\theta
			}\abs{x-\bar x}\\
			\leq&
			(\diam\Omega_1)^{1-\beta}
			\gamma_r^k(\delta,y)\abs{x-\bar x}^\beta
			\fall x,\bar x\in\Omega_1, 
		\end{align*}
		which allows us to choose $c_r^k(\delta,y):=(\diam\Omega_1)^{1-\beta}\gamma_r^k(\delta,y)$. 
	\end{itemize}
	However, this requires one higher order of continuous partial differentiability for the kernel function $f$. 

	(3) Replacing the function $c_r^k:\R_+\tm\Omega\to\R_+$ in $(\bar U_3^k)$ with
	\begin{align}
		\bar c_r^k:\R_+\tm\Omega&\to\R_+,&
		\bar c_r^k(\delta,y)&:=\sup_{\rho\leq\delta}c_r^k(\rho,y)
		\label{cmod}
	\end{align}
	yields a nondecreasing function $c_r^k(\delta,y)\leq\bar c_r^k(\delta,y)$ inheriting the other relevant properties from $c_r^k$. 
\end{rem}

\begin{thm}[well-definedness of $\sU$]\label{thmuwellb}
	Assume that $(U_0^0,\bar U_1^0,U_2^0)$ hold. Then an Urysohn operator $\sU:U_\alpha\to C_d^\beta(\Omega_1)$ is well-defined and bounded. 
	If additionally $(\bar U_3^0)$ holds, then $\sU$ is continuous. 
\end{thm}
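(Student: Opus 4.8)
The plan is to split the argument along the two constituents of the $\beta$-norm, handling the supremum part and the H\"older-seminorm part separately, and to outsource the former to the results already available for $C_d^0(\Omega_1)$. First I would note, via \rref{rem22}(1), that $(\bar U_1^0)$ entails $(U_1^0)$, so that $(U_0^0,U_1^0,U_2^0)$ are in force and \pref{propuwell} applies; this immediately delivers $\sU(u)\in C_d^0(\Omega_1)$ for every $u\in U_\alpha$, the $\norm{\cdot}_0$-boundedness of $\sU$ on bounded sets, and $\norm{\cdot}_0$-continuity. Everything the two assertions require beyond \pref{propuwell} is therefore control of the seminorm $[\sU(u)]_\beta$ and of the seminorm of increments $\sU(u_l)-\sU(u)$.

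For well-definedness and boundedness I would fix $u\in U_\alpha$ with $\norm{u}_0\leq r$ and estimate directly, for $x,\bar x\in\Omega_1$,
\begin{displaymath}
	\abs{\sU(u)(x)-\sU(u)(\bar x)}
	\leq
	\int_\Omega\abs{f(x,y,u(y))-f(\bar x,y,u(y))}\d\mu(y)
	\leq
	d(x,\bar x)^\beta\int_\Omega\bar h_r^0(y)\d\mu(y)
\end{displaymath}
by $(\bar U_1^0)$ with $z=u(y)\in Z\cap\bar B_r(0)$. Since $\bar h_r^0$ is integrable, this gives $[\sU(u)]_\beta\leq\int_\Omega\bar h_r^0\d\mu<\infty$, hence $\sU(u)\in C_d^\beta(\Omega_1)$; combining this bound with the $\norm{\cdot}_0$-bound from \pref{propuwell} (both depending only on $r$) shows $\sU$ maps bounded sets to bounded sets.

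The continuity claim is the substantive part, since convergence in $C_d^\beta(\Omega_1)$ additionally forces the H\"older seminorm of the difference to vanish. Given $u_l\to u$ in $U_\alpha$, I would choose $r$ with $u,u_l\in\bar B_r(0,C_n^0(\Omega))$ and set $\delta_l:=\norm{u_l-u}_0\to 0$; the part $\norm{\sU(u_l)-\sU(u)}_0\to 0$ is again furnished by \pref{propuwell}. For the seminorm I would rewrite the second difference as
\begin{displaymath}
	[\sU(u_l)-\sU(u)](x)-[\sU(u_l)-\sU(u)](\bar x)
	=
	\int_\Omega\bigl\{[f(x,y,u_l(y))-f(x,y,u(y))]-[f(\bar x,y,u_l(y))-f(\bar x,y,u(y))]\bigr\}\d\mu(y),
\end{displaymath}
whose integrand is exactly the quantity controlled by $(\bar U_3^0)$ with $z=u_l(y)$, $\bar z=u(y)$ and $\abs{z-\bar z}\leq\delta_l$. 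This yields $[\sU(u_l)-\sU(u)]_\beta\leq\int_\Omega c_r^0(\delta_l,y)\d\mu(y)$, which tends to $0$ as $\delta_l\searrow 0$ by the defining property of $c_r^0$ (if convenient, replacing $c_r^0$ by the nondecreasing $\bar c_r^0$ of \eqref{cmod} removes any concern about $\delta_l$ not being monotone). Together with the sup-norm convergence this gives $\norm{\sU(u_l)-\sU(u)}_\beta\to 0$.

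The main obstacle I anticipate is precisely this seminorm estimate: in contrast to the well-definedness bound it does not suffice to control each $f(x,y,\cdot)$ individually, one needs the \emph{mixed} difference in $x$ and $z$ to be uniformly small, which is exactly what the purpose-built hypothesis $(\bar U_3^0)$ supplies. Recognizing that the H\"older quotient of $\sU(u_l)-\sU(u)$ reproduces the left-hand side of \eqref{ub03s} verbatim is the crux; after that the whole matter reduces to the limit $\lim_{\delta\searrow 0}\int_\Omega c_r^0(\delta,y)\d\mu(y)=0$.
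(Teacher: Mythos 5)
Your proposal is correct and follows essentially the same route as the paper's proof: reduce the $\norm{\cdot}_0$-part to \pref{propuwell} via \rref{rem22}(1), obtain the seminorm bound $[\sU(u)]_\beta\leq\int_\Omega\bar h_r^0\d\mu$ from $(\bar U_1^0)$, and control $[\sU(u_l)-\sU(u)]_\beta$ by applying $(\bar U_3^0)$ to the mixed second difference. Your direct use of $\delta_l=\norm{u_l-u}_0$ in \eqref{ub03s} is a legitimate minor shortcut past the paper's detour through $\bar c_r^0$ from \eqref{cmod}.
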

\begin{proof}
	Choose $u\in U_\alpha$ and $r>0$ so large that $\norm{u}_0\leq r$ holds. Referring to \rref{rem22}(1) we can apply \pref{propuwell}, which guarantees that $\sU:U_\alpha\to C_d^0(\Omega_1)$ is well-defined, bounded and continuous. 

	(I) Claim: \emph{$\sU:U_\alpha\to C_d^\beta(\Omega_1)$ is well-defined and bounded.}\\
	Given arbitrary $x,\bar x\in\Omega_1$, using $(\bar U_1^0)$ the estimate
	\begin{displaymath}
		\abs{\sU(u)(x)-\sU(u)(\bar x)}
		\stackrel{\eqref{udef}}{\leq}
		\int_\Omega\abs{f(x,y,u(y))-f(\bar x,y,u(y))}\d\mu(y)
		\stackrel{\eqref{ub01}}{\leq}
		\int_\Omega\bar h_r^0(y)\d\mu(y)d(x,\bar x)^\beta
	\end{displaymath}
	implies $\sU(u)\in C_d^\beta(\Omega_1)$ ($\sU$ is well-defined) and $\sup_{\norm{u}\leq r}[\sU(u)]_\beta<\infty$ ($\sU$ is bounded). 

	(II) Claim: \emph{If $(\bar U_3^0)$ holds, then $\sU_0:U_\alpha\to C_d^\beta(\Omega_1)$ is continuous.}\\
	Let $(u_l)_{l\in\N}$ be a sequence in $U_\alpha$ satisfying $\lim_{l\to\infty}\norm{u_l-u}_0=0$. If $r>0$ is chosen sufficiently large that $u,u_l\in\bar B_r(0,C_n^0(\Omega))$ holds for all $l\in\N$, then $(\bar U_3^0)$ yields
	\begin{align*}
		&
		\abs{[\sU(u_l)-\sU(u)](x)-[\sU(u_l)-\sU(u)](\bar x)}\\
		\stackrel{\eqref{udef}}{\leq}&
		\int_\Omega\abs{f(x,y,u_l(y))-f(x,y,u(y))-\intcc{f(\bar x,y,u_l(y))-f(\bar x,y,u(y))}}\d\mu(y)\\
		\stackrel{\eqref{ub03s}}{\leq}&
		\int_\Omega c_r^0(\abs{u_l(y)-u(y)},y)\d\mu(y)d(x,\bar x)^\beta\\
		\stackrel{\eqref{cmod}}{\leq}&
		\int_\Omega\bar c_r^0(\norm{u_l-u}_0,y)\d\mu(y)d(x,\bar x)^\beta
		\fall x,\bar x\in\Omega_1, 
	\end{align*}
	hence, $\intcc{\sU(u_l)-\sU(u)}_\beta\leq\int_\Omega c_r^0(\norm{u_l-u}_0,y)\d\mu(y)$. This shows $\lim_{l\to\infty}\intcc{\sU(u_l)-\sU(u)}_\beta=0$ and combined with \eqref{ucont0} the claim results. 
\end{proof}

Completely continuity of $\sU$ can be achieved by e.g.\ slightly increasing the image space:
\begin{cor}[complete continuity of $\sU$]
	If additionally $(\bar U_3^0)$ holds, then $\sU:U_\alpha\to C_d^\gamma(\Omega_1)$ is completely continuous, provided one of the following holds:
	\begin{itemize}
		\item[(i)] $\alpha\in(0,1]$, $\gamma=\beta$ and $Z$ is closed, 

		\item[(ii)] $\Omega_1$ is bounded, $\alpha\in(0,1]$, $\gamma\in[0,\beta)$ and $Z$ is closed, 

		\item[(iii)] $\Omega_1$ is compact, $\gamma\in[0,\beta]$, $(U_1^0)$ holds with $\lim_{x\to x_0}\frac{\int_\Omega h_r^0(x,x_0,y)\d\mu(y)}{d(x,x_0)^\beta}=0$ uniformly in $x_0\in\Omega_1$,

		\item[(iv)] $\Omega_1$ is compact and $\gamma\in[0,\beta)$.
	\end{itemize}
\end{cor}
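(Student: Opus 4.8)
The plan is to deduce every case from \tref{thmuwellb}, which (together with the assumed $(\bar U_3^0)$) provides that $\sU:U_\alpha\to C_d^\beta(\Omega_1)$ is well-defined, bounded and continuous; since the proof of that theorem only invokes the sup-norm $\norm{\cdot}_0$, the analogous map $\sU_0:U_0\to C_d^\beta(\Omega_1)$ shares these properties. Complete continuity is then obtained by inserting one of two compactness mechanisms: a compact embedding on the \emph{domain} side (the operator $\sI_\alpha^0:U_\alpha\to U_0$ of \tref{thm10}, available for $\alpha\in(0,1]$ and $Z$ closed) or a compact embedding on the \emph{image} side ($C_d^\beta(\Omega_1)\hookrightarrow C_d^\gamma(\Omega_1)$, compact for compact $\Omega_1$ and $\gamma<\beta$). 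I shall repeatedly use that the composition of a continuous operator with a completely continuous one is again completely continuous (cf.~\cite[pp.~25--26, Thm.~2.1(2)]{precup:02}) and that $C_d^\beta(\Omega_1)\hookrightarrow C_d^\gamma(\Omega_1)$ is continuous for $\gamma\leq\beta$ on bounded $\Omega_1$ (\tref{thmembed}).

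For case (i) I would factor $\sU=\sU_0\circ\sI_\alpha^0$: here $\sI_\alpha^0$ is compact (this is exactly where $\alpha\in(0,1]$ and the closedness of $Z$ enter, the latter ensuring that the $C_n^0(\Omega)$-closures of bounded sets produced by Arzel\`a--Ascoli remain inside $U_0$), while $\sU_0:U_0\to C_d^\beta(\Omega_1)$ is continuous, so $\sU:U_\alpha\to C_d^\beta(\Omega_1)=C_d^\gamma(\Omega_1)$ is completely continuous. Case (ii) then follows at once, since its hypotheses still give complete continuity of $\sU$ into $C_d^\beta(\Omega_1)$ exactly as in (i), and postcomposing with the continuous embedding $C_d^\beta(\Omega_1)\hookrightarrow C_d^\gamma(\Omega_1)$ (valid because $\Omega_1$ is bounded and $\gamma<\beta$) preserves complete continuity.

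For case (iv) I would instead put the compactness on the image side: by \tref{thmuwellb} the map $\sU:U_\alpha\to C_d^\beta(\Omega_1)$ is bounded and continuous, and for compact $\Omega_1$ and $\gamma<\beta$ the embedding $C_d^\beta(\Omega_1)\hookrightarrow C_d^\gamma(\Omega_1)$ is compact; composing the two yields a completely continuous map into $C_d^\gamma(\Omega_1)$ with no restriction on $Z$.

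The main obstacle is case (iii), where $\gamma=\beta$ is allowed and $\alpha$ may vanish, so neither compact embedding is available and relative compactness must be shown directly in $C_d^\beta(\Omega_1)$. Fixing $r>0$, for every $u\in U_\alpha$ with $\norm{u}_0\leq r$ and all $x\neq x_0$ in $\Omega_1$ the estimate
\begin{displaymath}
	\frac{\abs{\sU(u)(x)-\sU(u)(x_0)}}{d(x,x_0)^\beta}
	\stackrel{\eqref{udef}}{\leq}
	\frac{\int_\Omega\abs{f(x,y,u(y))-f(x_0,y,u(y))}\d\mu(y)}{d(x,x_0)^\beta}
	\stackrel{\eqref{ua01}}{\leq}
	\frac{\int_\Omega h_r^0(x,x_0,y)\d\mu(y)}{d(x,x_0)^\beta}
\end{displaymath}
holds, and the refined limit assumed in (iii) makes the right-hand side tend to $0$ as $x\to x_0$, uniformly in $x_0\in\Omega_1$ and (since the bound is independent of $u$) uniformly over the family $\sU\bigl(U_\alpha\cap\bar B_r(0,C_n^0(\Omega))\bigr)$. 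This uniform vanishing of the H\"older quotients, combined with the $C_d^\beta$-boundedness from \tref{thmuwellb} and the compactness of $\Omega_1$, should give relative compactness of that image in $C_d^\beta(\Omega_1)$ through a H\"older analogue of the Arzel\`a--Ascoli theorem; the continuous embedding into $C_d^\gamma(\Omega_1)$ then extends this to every $\gamma\in[0,\beta]$, and the continuity of $\sU$ finishes the argument. The delicate points are to phrase this uniform little-H\"older estimate cleanly and to have the corresponding precompactness criterion in the H\"older space at hand.
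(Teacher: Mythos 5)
Your proposal is correct and follows essentially the same route as the paper: cases (i) and (ii) via Arzel\`a--Ascoli on the domain (with $Z$ closed to keep the $\norm{\cdot}_0$-limit in $U_0$) together with the strong continuity of $\sU$ from \tref{thmuwellb}, case (iv) via the compact embedding $\sI_\beta^\gamma$, and case (iii) via the uniform vanishing of the H\"older quotients of $\sU(u)$. The precompactness criterion in $C_d^\beta(\Omega_1)$ you ask for at the end is exactly \tref{thmaa} in the appendix, which is what the paper invokes at that point.
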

\begin{proof}
	We write $\sU_0:U_\alpha\to C_d^\beta(\Omega_1)$ and $\sU_\alpha^\gamma:U_\alpha\to C_d^\gamma(\Omega_1)$ for the operator given in \eqref{udef}. 

	(I) Claim: \emph{If $\alpha\in(0,1]$, then $\sU_0:U_\alpha\to C_d^\beta(\Omega_1)$ is completely continuous.}\\
	Let $(u_l)_{l\in\N}$ be a bounded sequence in $U_\alpha$, i.e.\ there exists a $r>0$ such that 
	\begin{align}
		\sup_{x\in\Omega}\abs{u_l(x)}&\leq r,&
		\sup_{\substack{x,\bar x\in\Omega,\\ x\neq \bar x}}
		\frac{\abs{u_l(x)-u_l(\bar x)}}{d(x,\bar x)^\alpha}\leq r\fall l\in\N. 
		\label{nopachale}
	\end{align}
	Thus, the subset $\set{u_l}_{l\in\N}\subset C_n^0(\Omega)$ is bounded and equicontinuous. By the Arzel{\`a}-Ascoli theorem \cite[p.~31, Thm.~3.2]{martin:76} there exists a subsequence $(u_{k_l})_{l\in\N}$ and a $u\in C_n^0(\Omega)$ with $\lim_{l\to\infty}\|u_{k_l}-u\|_0=0$. Because $Z$ is closed, we have $u(x)\in Z$ for all $x\in\Omega$, i.e.\ $u\in U_0$. Since \eqref{nopachale} also holds for each $u_{k_l}$, passing to the limit $l\to\infty$ shows $u\in U_\alpha$. The continuity shown in \tref{thmuwellb} implies $\lim_{l\to\infty}\|\sU_0(u_{k_l})-\sU_0(u)\|_\beta=0$. This establishes that every bounded sequence in $\sU_0(U_\alpha)\subset C_d^\beta(\Omega_1)$ has a convergent subsequence, i.e.\ the image $\sU_0\bigl(U_\alpha\cap\bar B_r(0,C_n^\alpha(\Omega))\bigr)$ is relatively compact. Therefore, $\sU_0$ maps bounded subsets of $U_\alpha$ into relatively compact sets. This shows that $\sU_0$ is completely continuous. 

	(II) Claim: \emph{If (iii) holds, then $\sU_0^\beta:U_0\to C_d^\beta(\Omega_1)$ is completely continuous.}\\
	Let $B\subset C_n^0(\Omega)$ be bounded and choose $r>0$ so large that $\norm{u}_0\leq r$ holds for all $u\in B$. We establish that $\sU_0^\beta(B)\subset C_d^\beta(\Omega_1)$ fulfills the assumptions of \tref{thmaa}. First, $\sU_0^\beta(B)$ is bounded due to \tref{thmuwellb}. Second, given $\eps>0$ by assumption (iii) there exists a $\delta>0$ such that $d(x,\bar x)\leq\delta$ implies the estimate
	$
		\frac{\int_\Omega h_r(x,\bar x,y)\d\mu(y)}{d(x,\bar x)^\beta}\leq\eps
	$
	for all $x,\bar x\in\Omega_1$, $x\neq\bar x$. Consequently, we obtain that
	$$
		\abs{\sU(u)(x)-\sU(u)(\bar x)}
		\stackrel{\eqref{udef}}{\leq}
		\int_\Omega\abs{f(x,y,u(y))-f(\bar x,y,u(y))}\d\mu(y)
		\stackrel{\eqref{ua01}}{\leq}
		\int_\Omega h_r^0(x,x_0,y)\d\mu(y)
		\leq
		\eps d(x,\bar x)^\beta. 
	$$
	Thus, the bounded $\sU_0^\beta(B)\subset C_d^\beta(\Omega_1)$ is relatively compact. Hence, $\sU_0^\beta$ is completely continuous.
	
	(III) Under 
	(i) the mapping $\sU=\sU_0$ is completely continuous due to step (I), 
	under (ii) the map $\sU=\sI_\beta^\gamma\circ\sU_0$ is a composition with a continuous embedding $\sI_\beta^\gamma$ (cf.~\tref{thmembed}) with the completely continuous $\sU_0$, under assumption (iii) the operator $\sU=\sI_\beta^\gamma\sU_0^\beta\sU_\alpha^0$ is a composition of bounded embeddings (see \tref{thmembed}) with the due to step (II) completely continuous $\sU_0^\beta$ and finally under (iv) the embedding $\sI_\beta^\gamma$ in $\sU=\sI_\beta^\gamma\sU_0$ is compact thanks to \tref{thm10}. In conclusion, at least one function in the above compositions is completely continuous and the claim results from \cite[pp.~25--26, Thm.~2.1(2)]{precup:02}. 
\end{proof}

For a Lipschitz condition we have to invest continuous differentiability of the kernel function:
\begin{cor}\label{cor25}
	If additionally $(\bar U_1^1)$, $(U_0')$ with $\vartheta=1$ hold on a convex set $Z\subseteq\R^n$, then $\sU:U_\alpha\to C_d^\beta(\Omega_1)$ is Lipschitz on bounded sets, that is 
	\begin{displaymath}
		\intcc{\sU|_{U_\alpha\cap\bar B_r(0,C_n^0(\Omega))}}_1
		\leq
		\max\set{\esup_{\xi\in\Omega_1}\int_\Omega l_r(\xi,y)\d\mu(y),\int_\Omega\bar h_r^1(y)\d\mu(y)}
		\fall r>0.
	\end{displaymath}
\end{cor}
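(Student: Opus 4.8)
The plan is to exploit the structure of the $\beta$-H\"older norm, $\norm{v}_\beta=\max\set{\sup_{x}\abs{v(x)},[v]_\beta}$, and to bound the two constituents of $\norm{\sU(u)-\sU(\bar u)}_\beta$ separately, each by one of the two integrals inside the maximum on the right-hand side. Throughout I fix $r>0$ and take $u,\bar u\in U_\alpha\cap\bar B_r(0,C_n^0(\Omega))$.

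First I would dispose of the supremum part. Since $(U_0')$ is assumed with $\vartheta=1$, \cref{corulip} (read with $\vartheta=1$) applies verbatim and yields
$$
	\norm{\sU(u)-\sU(\bar u)}_0
	\leq
	\esup_{\xi\in\Omega_1}\int_\Omega l_r(\xi,y)\d\mu(y)\,\norm{u-\bar u}_0,
$$
so that $\sup_{x\in\Omega_1}\abs{\sU(u)(x)-\sU(\bar u)(x)}$ is controlled by the first entry of the maximum.

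The substantive step is the H\"older seminorm. Here I would start from the mixed second difference
$$
	[\sU(u)-\sU(\bar u)](x)-[\sU(u)-\sU(\bar u)](\bar x)
	=
	\int_\Omega\bigl\{[f(x,y,u(y))-f(x,y,\bar u(y))]-[f(\bar x,y,u(y))-f(\bar x,y,\bar u(y))]\bigr\}\d\mu(y)
$$
and estimate its integrand pointwise. Because $Z$ is convex and $(\bar U_1^1)$ holds, the Mean Value Theorem computation recorded in \rref{rem22}(2) (the convex case with $k=0$, which produces $c_r^0(\delta,y)=\delta\bar h_r^1(y)$) gives
$$
	\bigl|[f(x,y,u(y))-f(x,y,\bar u(y))]-[f(\bar x,y,u(y))-f(\bar x,y,\bar u(y))]\bigr|
	\leq
	\bar h_r^1(y)\,\abs{u(y)-\bar u(y)}\,d(x,\bar x)^\beta.
$$
Using $\abs{u(y)-\bar u(y)}\leq\norm{u-\bar u}_0$, integrating over $\Omega$, dividing by $d(x,\bar x)^\beta$ and passing to the supremum over $x\neq\bar x$ then produces
$$
	[\sU(u)-\sU(\bar u)]_\beta
	\leq
	\int_\Omega\bar h_r^1(y)\d\mu(y)\,\norm{u-\bar u}_0,
$$
the second entry of the maximum.

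Combining the two bounds through the definition of $\norm{\cdot}_\beta$ yields $\norm{\sU(u)-\sU(\bar u)}_\beta\leq\max\set{\dots}\norm{u-\bar u}_0$ with the two integrals just computed, which is the claimed Lipschitz estimate. I expect the only delicate point to be the legitimacy of the pointwise integrand estimate, namely that the convex combinations $\bar u(y)+\theta(u(y)-\bar u(y))$ remain in $Z\cap\bar B_r(0)$ so that $(\bar U_1^1)$ may be applied along the whole segment; this is exactly where convexity of $Z$ (together with that of the ball) enters, and it is precisely the hypothesis feeding \rref{rem22}(2). The remaining manipulations are routine applications of results already established.
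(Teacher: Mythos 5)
Your proposal is correct and follows essentially the same route as the paper: the sup-norm part via \cref{corulip} with $\vartheta=1$, and the seminorm part via the Mean Value Theorem along the segment $\bar u(y)+\theta(u(y)-\bar u(y))$ combined with $(\bar U_1^1)$ — the paper simply carries out that MVT computation inline rather than citing \rref{rem22}(2). Your closing observation that the segment must stay in $Z\cap\bar B_r(0)$ (using convexity of the ball as well as of $Z$) is exactly the point the paper's proof relies on.
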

\begin{proof}
	From \rref{rem22}(1) and \cref{corulip} we obtain that $\sU:U_\alpha\to C_d^0(\Omega_1)$ is Lipschitz on bounded sets. Let $r>0$ and choose $u,\bar u\in U_\alpha\cap\bar B_r(0,C_n^0(\Omega))$. Since $Z$ is convex, the inclusion $u(y)+\theta(u_l(y)-u(y))\in Z$ holds for all $y\in\Omega$ and $\theta\in[0,1]$. Hence, \cite[p.~341, Thm.~4.2]{lang:93} applies and implies
	\begin{align*}
		&
		[\sU(u)-\sU(\bar u)](x)-\intcc{\sU(u)-\sU(\bar u)}(\bar x)\\
		\stackrel{\eqref{udef}}{=}&
		\int_\Omega f(x,y,u(y))-f(x,y,\bar u(y))-\intcc{f(\bar x,y,u(y))-f(\bar x,y,\bar u(y))}\d\mu(y)\\
		=&
		\int_\Omega
		\int_0^1
		D_3f\bigl(x,y,\bar u(y)+\theta(u(y)-\bar u(y))\bigr)-D_3f\bigl(\bar x,y,\bar u(y)+\theta(u(y)-\bar u(y))\bigr)\d\theta\intcc{u(y)-\bar u(y)}\d\mu(y)
	\end{align*}
	and consequently $(\bar U_1^1)$ leads to
	\begin{align*}
		&
		\abs{[\sU(u)-\sU(\bar u)](x)-\intcc{\sU(u)-\sU(\bar u)}(\bar x)}\\
		\leq&
		\int_\Omega\!
		\int_0^1\!
		\abs{D_3f\bigl(x,y,\bar u(y)+\theta(u(y)-\bar u(y))\bigr)\!-\!D_3f\bigl(\bar x,y,\bar u(y)+\theta(u(y)-\bar u(y))\bigr)}\d\theta\abs{u(y)-\bar u(y)}\d\mu(y)\\
		\stackrel{\eqref{ub01}}{\leq}&
		\int_\Omega\bar h_r^1(y)\d\mu(y)d(x,\bar x)^\beta\norm{u-\bar u}_0\fall x,\bar x\in\Omega_1. 
	\end{align*}
	This guarantees the estimate $[\sU(u)-\sU(\bar u)]_\beta\leq\int_\Omega\bar h_r^1(y)\d\mu(y)\norm{u-\bar u}_0$ and combined with \eqref{corulip1} it results that $\sU$ is Lipschitz on bounded sets. 
\end{proof}
\subsection{Continuous differentiability}
In the following, we investigate the smoothness of Urysohn operators \eqref{udef}:
\begin{lem}\label{lemuk}
	Assume that $(U_0^k,U_1^k,U_2^k)$ hold for some $k\in\N$. 
	Then $\sU^k:U_\alpha\to L_k(C_n^\alpha(\Omega),C_d^0(\Omega_1))$ given by
	\begin{equation}
		\sU^k(u)v_1\cdots v_k
		:=
		\int_\Omega D_3^kf\bigl(\cdot,y,u(y)\bigr)v_1(y)\cdots v_k(y)\d\mu(y)
		\fall v_1,\ldots, v_k\in C_n^\alpha(\Omega)
		\label{ukdef}
	\end{equation}
	is well-defined and continuous.
\end{lem}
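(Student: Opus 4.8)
The plan is to mirror the structure of \pref{propuwell}, reading the statement for the $k$-th order multilinear operator $\sU^k$ as the statement for $\sU$ with the kernel $f$ replaced by its partial derivative $D_3^kf$ and the multilinear factor $v_1(y)\cdots v_k(y)$ carried along as a harmless bounded weight. Concretely, I first fix $u\in U_\alpha$ together with $r>0$ such that $\norm{u}_0\leq r$, so that $u(y)\in Z\cap\bar B_r(0)$ for almost all $y$, and verify that $\sU^k(u)$ is a bounded $k$-linear map into $C_d^0(\Omega_1)$. For arbitrary $v_1,\dots,v_k\in C_n^\alpha(\Omega)$ the integrand $y\mapsto D_3^kf(x,y,u(y))v_1(y)\cdots v_k(y)$ is measurable, since $y\mapsto D_3^kf(x,y,u(y))$ is measurable by the Carath\'eodory composition lemma \cite[p.~62, Lemma~5.1]{precup:02} (using continuity in the last argument from $(U_0^k)$ and measurability in $y$ from $(U_2^k)$) and the continuous $v_i$ are measurable. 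Continuity of $x\mapsto[\sU^k(u)v_1\cdots v_k](x)$ then follows as in step (I) of \pref{propuwell}: the difference is bounded via $(U_1^k)$ and \eqref{ua01} by $(\prod_{i=1}^{k}\norm{v_i}_0)\int_\Omega h_r^k(x,x_0,y)\d\mu(y)$, which tends to $0$ as $x\to x_0$ by \eqref{ua00}. Boundedness in $x$, through $(U_2^k)$ and \eqref{ua02}, gives
\[
	\abs{[\sU^k(u)v_1\cdots v_k](x)}\leq(\textstyle\prod_{i=1}^{k}\norm{v_i}_0)\,\esup_{\xi\in\Omega_1}\int_\Omega b_r^k(\xi,y)\d\mu(y),
\]
so $\sU^k(u)v_1\cdots v_k\in C_d^0(\Omega_1)$. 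Since $D_3^kf(\cdot,y,u(y))\in L_k(\R^n,\R^d)$ and integration is linear, the assignment $(v_1,\dots,v_k)\mapsto\sU^k(u)v_1\cdots v_k$ is $k$-linear, and the displayed estimate together with $\norm{v_i}_0\leq\norm{v_i}_\alpha$ shows $\norm{\sU^k(u)}\leq\esup_{\xi\in\Omega_1}\int_\Omega b_r^k(\xi,y)\d\mu(y)<\infty$; hence $\sU^k(u)\in L_k(C_n^\alpha(\Omega),C_d^0(\Omega_1))$.

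For the continuity of $u\mapsto\sU^k(u)$ I would take a sequence $u_l\to u$ in $\norm{\cdot}_0$ inside $U_\alpha$, fix $r$ large enough that $u,u_l\in\bar B_r(0,C_n^0(\Omega))$ for all $l$, and test against $v_1,\dots,v_k$ with $\norm{v_i}_\alpha\leq1$. Then $\abs{v_i(y)}\leq1$ absorbs the multilinear factor, reducing the operator norm to
\[
	\norm{\sU^k(u_l)-\sU^k(u)}\leq\sup_{x\in\Omega_1}\int_\Omega\abs{D_3^kf(x,y,u_l(y))-D_3^kf(x,y,u(y))}\d\mu(y).
\]
I would show this supremum tends to $0$ by transcribing step (II) of \pref{propuwell} with $D_3^kf$ in place of $f$ and $b_r^k$ in place of $b_r^0$: $(U_0^k)$ furnishes the pointwise convergence $D_3^kf(x,y,u_l(y))\to D_3^kf(x,y,u(y))$ for all $x$ and almost all $y$, the bound $2b_r^k(x,y)$ from $(U_2^k)$ provides domination and the uniform absolute continuity of the integrals, and Egoroff's theorem \cite[p.~87, Prop.~3.1.3]{cohn:80} splits $\Omega$ into a small exceptional set $\Omega'$ plus a set of uniform convergence, yielding the $\eps/2+2\cdot\eps/4=\eps$ estimate uniformly in $x$ for $l$ large.

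The main obstacle is precisely this uniform-in-$x$ control required by the $C^0$-operator norm on $L_k(C_n^\alpha(\Omega),C_d^0(\Omega_1))$: one must arrange the Egoroff splitting and the threshold index $L$ so that they do not depend on $x\in\Omega_1$. This is exactly what the $x$-uniform domination by $b_r^k$ in $(U_2^k)$ and the resulting uniform absolute continuity of $\int_\Omega b_r^k(x,\cdot)\d\mu$ secure, and it is the only point where the argument goes beyond a routine relabelling of the $k=0$ case. Everything else — measurability, continuity in $x$, the multilinearity, and the operator-norm bound — is a direct consequence of the $k$-th order Carath\'eodory conditions $(U_0^k,U_1^k,U_2^k)$ together with the estimate $\norm{v_i}_0\leq\norm{v_i}_\alpha$.
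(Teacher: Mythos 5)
Your proposal is correct and follows essentially the same route as the paper: well-definedness and the operator-norm bound are obtained by transcribing step (I) of \pref{propuwell} with $D_3^kf$ in place of $f$ and the factor $\prod_i\norm{v_i}_0\leq\prod_i\norm{v_i}_\alpha$ carried along, and continuity of $u\mapsto\sU^k(u)$ is proved by the same Egoroff/absolute-continuity splitting as in step (II), followed by taking suprema over $x\in\Omega_1$ and over $v_1,\dots,v_k$ in the unit ball. No substantive differences from the paper's argument.
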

\begin{proof}
	The well-definedness of $\sU^k$ is shown verbatim to the step (I) of the proof for \pref{propuwell} and we hence focus on continuity. Given $u\in U_\alpha$ let $(u_l)_{l\in\N}$ be a sequence in $U_\alpha$ with $\lim_{l\to\infty}\norm{u_l-u}_0=0$. Choose $r>0$ so large that $u,u_l\in B_r(0,C_n^0(\Omega))$ for all $l\in\N$. Using $(U_0^k)$ this leads to 
	\begin{displaymath}
		\lim_{l\to\infty}D_3^kf(x,y,u_l(y))=D_3^kf(x,y,u(y))\fall x\in\Omega_1\text{ and almost all }y\in\Omega.
	\end{displaymath}
	For $\eps>0$ there exists a $\delta>0$ such that subsets $\tilde\Omega\subseteq\Omega$ satisfying $\mu(\tilde\Omega)\leq\delta$ fulfill $\int_{\tilde\Omega}b_r^k(x,y)\d\mu(y)\leq\tfrac{\eps}{4}$ and Egoroff's theorem \cite[p.~87, Prop.~3.1.3]{cohn:80} yields a $\Omega'\subseteq\Omega$ with $\mu(\Omega')\leq\delta$ and an $L_1\in\N$ such that $\abs{D_3^kf(x,y,u_l(y))-D_3^kf(x,y,u(y))}\leq\tfrac{\eps}{2\mu(\Omega)}$ for all $x\in\Omega_1$, $y\in\Omega\setminus\Omega'$, $l\geq L_1$. From this, for any $x\in\Omega_1$ and integers $l\geq L_1$ we arrive at
	\begin{align}
		\abs{[\sU^k(u_l)-\sU^k(u)]v_1\cdots v_k(x)}
		\stackrel{\eqref{ukdef}}{\leq}&
		\int_{\Omega\setminus\Omega'}\abs{D_3^kf(x,y,u_l(y))-D_3^kf(x,y,u(y))}\d\mu(y)
		\notag\\
		&\quad+
		\int_{\Omega'}\abs{D_3^kf(x,y,u_l(y))-D_3^kf(x,y,u(y))}\d\mu(y)
		\label{ukdefs}\\
		\leq&
		\int_{\Omega\setminus\Omega'}\tfrac{\eps}{2\mu(\Omega)}\d\mu(y)+
		2\int_{\Omega'}b_r^k(x,y)\d\mu(y)
		\leq
		\tfrac{\eps}{2}+2\tfrac{\eps}{4}=\eps.
		\notag
	\end{align}
	Passing first to the supremum over $x\in\Omega_1$ therefore implies
	\begin{equation}
		\norm{[\sU^k(u_l)-\sU^k(u)]v_1\cdots v_k}_0
		\leq
		\eps\fall l\geq L_1,\,v_1,\ldots,v_k\in\bar B_1(0,C_n^\alpha(\Omega))
		\label{nolimits}
	\end{equation}
	and second over the vectors $v_1,\ldots,v_k$ yields $\bigl\|\sU^k(u_l)-\sU^k(u)\bigr\|_{L_k(C_n^\alpha(\Omega),C_d^0(\Omega_1))}\leq\eps$ for all $l\geq L_1$. Hence, since $u$ was arbitrary, $\sU^k$ is continuous. 
\end{proof}

\begin{prop}[continuous differentiability of $\sU$]\label{propuder}
	Let $m\in\N$. Assume that $(U_0^k,U_1^k,U_2^k)$ hold for all $0\leq k\leq m$ on a convex set $Z\subseteq\R^n$. 
	Then an Urysohn operator $\sU:U_\alpha\to C_d^0(\Omega_1)$ is $m$-times continuously differentiable with $D^k\sU=\sU^k$ for every $1\leq k\leq m$. 
\end{prop}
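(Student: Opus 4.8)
The plan is to proceed by induction, showing for each $1\le k\le m$ that the $(k-1)$-linear-form-valued map $\sU^{k-1}$ from \lref{lemuk} is Fr\'echet differentiable on $U_\alpha$ with $D\sU^{k-1}=\sU^k$. Since $\sU^0=\sU$ by definition ($D_3^0f=f$, $L_0=C_d^0(\Omega_1)$) and every $\sU^k$ is continuous by \lref{lemuk} (resp.\ \pref{propuwell} for $k=0$), chaining these differentiations yields $D^k\sU=\sU^k$ for $1\le k\le m$ together with continuity of each $D^k\sU$, which is precisely the claimed $m$-fold continuous differentiability. Throughout I use the canonical isometric identification $L_k(C_n^\alpha(\Omega),C_d^0(\Omega_1))\cong L\bigl(C_n^\alpha(\Omega),L_{k-1}(C_n^\alpha(\Omega),C_d^0(\Omega_1))\bigr)$, so that the derivative of the $(k-1)$-linear object $\sU^{k-1}$ may be read as the $k$-linear object $\sU^k$.

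For the differentiability step I would fix $u\in U_\alpha$ and an admissible increment $v\in C_n^\alpha(\Omega)$ with $u+v\in U_\alpha$, choosing $r>0$ with $\norm{u}_0,\norm{u+v}_0\le r$. Here convexity of $Z$ enters decisively: since $u(y),u(y)+v(y)\in Z\cap\bar B_r(0)$, the whole segment $u(y)+\theta v(y)$, $\theta\in[0,1]$, stays in $Z\cap\bar B_r(0)$, so the Mean Value Theorem \cite[p.~341, Thm.~4.2]{lang:93} applies to $z\mapsto D_3^{k-1}f(x,y,z)$ and gives, for all $x\in\Omega_1$ and almost all $y\in\Omega$,
\[
	D_3^{k-1}f(x,y,u(y)+v(y))-D_3^{k-1}f(x,y,u(y))=\int_0^1 D_3^kf\bigl(x,y,u(y)+\theta v(y)\bigr)v(y)\d\theta.
\]
Subtracting $D_3^kf(x,y,u(y))v(y)$, pairing with arbitrary $w_1,\dots,w_{k-1}\in C_n^\alpha(\Omega)$, integrating over $\Omega$, and interchanging the $\theta$- and $y$-integrations (legitimate because $(U_2^k)$ dominates the integrand by $2b_r^k(x,\cdot)$, which is $\mu$-integrable, on the finite product $[0,1]\tm\Omega$) then represents the remainder as
\[
	R(v):=\sU^{k-1}(u+v)-\sU^{k-1}(u)-\sU^k(u)v=\int_0^1\bigl[\sU^k(u+\theta v)-\sU^k(u)\bigr]v\d\theta.
\]

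It remains to verify $\norm{R(v)}=o(\norm{v}_\alpha)$. Estimating the $(k-1)$-linear form by its operator norm and using $\norm{(u+\theta v)-u}_\alpha=\theta\norm{v}_\alpha\le\norm{v}_\alpha$ yields
\[
	\frac{\norm{R(v)}_{L_{k-1}(C_n^\alpha(\Omega),C_d^0(\Omega_1))}}{\norm{v}_\alpha}\le\sup_{\theta\in[0,1]}\bigl\|\sU^k(u+\theta v)-\sU^k(u)\bigr\|_{L_k(C_n^\alpha(\Omega),C_d^0(\Omega_1))}.
\]
Since $\sU^k$ is continuous at $u$ by \lref{lemuk} and $u+\theta v$ lies in the $\norm{v}_\alpha$-ball around $u$ uniformly in $\theta$, the right-hand side tends to $0$ as $\norm{v}_\alpha\to 0$, giving $D\sU^{k-1}(u)=\sU^k(u)$ and completing the induction.

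I expect the genuinely delicate input to be already packaged in \lref{lemuk}: the uniformity in $x\in\Omega_1$ of the remainder estimate is not re-derived here but inherited from the \emph{operator-norm} continuity of $\sU^k$, which already encapsulates the Egoroff/domination machinery; the only new ingredients are the convexity of $Z$ (to run the Mean Value Theorem along admissible segments) and the Fubini interchange. A secondary, more conceptual point is the meaning of the derivative on the possibly non-open domain $U_\alpha$: differentiability is to be understood relative to admissible increments (using the boundary extension $\bar f$ introduced for points $z_0\in\partial Z$), which is consistent with the formula since $\sU^k(u)$ only evaluates $D_3^kf$ at the values $u(y)\in Z$.
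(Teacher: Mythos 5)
Your proposal is correct and follows essentially the same route as the paper's own proof: induction on the order of differentiation, the Mean Value Theorem along segments $u+\theta v$ (made admissible by the convexity of $Z$), a Fubini interchange of the $\theta$- and $y$-integrations, and a remainder bounded by $\sup_{\theta\in[0,1]}\norm{\sU^{k}(u+\theta v)-\sU^{k}(u)}\,\norm{v}_\alpha$, which tends to zero by the operator-norm continuity established in \lref{lemuk}. The only differences are cosmetic (an index shift $k-1\to k$ versus $k\to k+1$) plus your explicit remarks on the identification $L_k\cong L(\cdot,L_{k-1})$ and on differentiability relative to the possibly non-open set $U_\alpha$, which the paper leaves implicit.
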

\begin{proof}
	(I) Thanks to \lref{lemuk} the mappings $\sU^k:U_\alpha\to L_k(C_n^\alpha(\Omega),C_d^0(\Omega_1))$ are well-defined and continuous for $0\leq k\leq m$. Let $u\in U_\alpha$ and $h\in C_d^\alpha(\Omega)$ such that $u+h\in U_\alpha$. Due to the convexity of $Z$, the inclusion $u(y)+\theta h(y)\in Z$ holds for all $y\in\Omega$ and $\theta\in[0,1]$. Then the remainder functions
	\begin{displaymath}
		r_k(h):=\sup_{\theta\in[0,1]}\norm{\sU^{k+1}(u+\theta h)-\sU^{k+1}(u)}_{L_{k+1}(C_n^\alpha(\Omega),C_d^0(\Omega_1))}
	\end{displaymath}
	satisfy $\lim_{h\to 0}r_k(h)=0$ for all $0\leq k<m$. Now we obtain from \cite[p.~341, Thm.~4.2]{lang:93} that
	\begin{align*}
		&
		[\sU^k(u+h)-\sU^k(u)-\sU^{k+1}(u)h](x)\\
		\stackrel{\eqref{ukdef}}{=}&
		\int_\Omega D_3^kf(x,y,u(y)+h(y))-D_3^kf(x,y,u(y))-D_3^{k+1}f(x,y,u(y))h(y)\d\mu(y)\\
		=&
		\int_\Omega\int_0^1\intcc{D_3^{k+1}f(x,y,u(y)+\theta h(y))-D_3^{k+1}f(x,y,u(y))}h(y)\d\theta\d\mu(y)\\
		\stackrel{\eqref{ukdef}}{=}&
		\int_0^1\intcc{(\sU^{k+1}(u+\theta h)-\sU^k(u))h}(x)\d\theta
	\end{align*}
	by Fubini's theorem \cite[p.~155, Thm.~14.1]{dibenedetto:16}. Consequently, 
	\begin{align*}
		\abs{[\sU^k(u+h)-\sU^k(u)-\sU^{k+1}(u)h](x)}
		&\leq
		\int_0^1\norm{\sU^{k+1}(u+\theta h)-\sU^{k+1}(u)}_{L_{k+1}(C_n^\alpha(\Omega),C_d^0(\Omega_1))}\d\theta\norm{h}_0\\
		&\leq
		r_{k+1}(h)\norm{h}_\alpha\fall x\in\Omega_1
	\end{align*}
	and after passing to the least upper bound over $x\in\Omega_1$ it results
	\begin{equation}
		\norm{\sU^k(u+h)-\sU^k(u)-\sU^{k+1}(u)h}_{L_k(C_n^\alpha(\Omega),C_d^0(\Omega_1))}
		\leq 
		r_k(h)\norm{h}_\alpha.
		\label{nor1}
	\end{equation}
	This establishes that $\sU^k:U_\alpha\to L_k(C_n^\alpha(\Omega),C_d^0(\Omega_1))$ is differentiable in $u$ with the derivative $\sU^{k+1}(u)$. 

	(II) Applying step (I) in case $k=0$ shows that $\sU$ is differentiable on $U_\alpha$ with the derivative $\sU^1$. Given this, mathematical induction yields that $\sU:U_\alpha\to C_d^0(\Omega_1)$ is actually $m$-times differentiable with the derivatives $D^k\sU=\sU^k$ for all $1\leq k\leq m$, which in turn are continuous due to \lref{lemuk}. 
\end{proof}

We proceed to Urysohn operators having values in a H\"older space. 
\begin{lem}\label{lemukb}
	Assume that $(U_0^k,\bar U_1^k,U_2^k)$ hold for some $k\in\N$. Then $\sU^k:U_\alpha\to L_k(C_n^\alpha(\Omega),C_d^\beta(\Omega_1))$ given by \eqref{ukdef} is well-defined. If additionally $(\bar U_3^k)$ holds, then $\sU^k$ is continuous. 
\end{lem}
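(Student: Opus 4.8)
The plan is to piggy-back on \lref{lemuk}, exploiting that the target norm splits. Since $\norm{w}_\beta=\max\set{\norm{w}_0,[w]_\beta}$ on $C_d^\beta(\Omega_1)$, the operator norm of a $k$-linear map $T$ into $C_d^\beta(\Omega_1)$ satisfies $\norm{T}=\max\set{\sup_{\norm{v_i}_\alpha\leq 1}\norm{Tv_1\cdots v_k}_0,\,\sup_{\norm{v_i}_\alpha\leq 1}[Tv_1\cdots v_k]_\beta}$ because $\sup\max=\max\sup$. By \rref{rem22}(1) the hypothesis $(\bar U_1^k)$ entails $(U_1^k)$, so \lref{lemuk} already supplies the full $C_d^0(\Omega_1)$-information: $\sU^k(u)$ is a bounded $k$-linear form into $C_d^0(\Omega_1)$ and the sup-norm part of the operator norm is continuous in $u$. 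Consequently only the $\beta$-seminorm part is new, and the whole argument parallels the $k=0$ case \tref{thmuwellb}, with $\abs{v_1(y)}\cdots\abs{v_k(y)}\leq\norm{v_1}_\alpha\cdots\norm{v_k}_\alpha$ playing the role of the extra factor carried along.

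For well-definedness I would fix $u\in U_\alpha$ with $\norm{u}_0\leq r$, pick $v_1,\ldots,v_k\in C_n^\alpha(\Omega)$ and arbitrary $x,\bar x\in\Omega_1$, and bound inside the integral via $(\bar U_1^k)$:
\begin{align*}
	\abs{[\sU^k(u)v_1\cdots v_k](x)-[\sU^k(u)v_1\cdots v_k](\bar x)}
	&\stackrel{\eqref{ukdef}}{\leq}
	\int_\Omega\abs{D_3^kf(x,y,u(y))-D_3^kf(\bar x,y,u(y))}\abs{v_1(y)}\cdots\abs{v_k(y)}\d\mu(y)\\
	&\stackrel{\eqref{ub01}}{\leq}
	\int_\Omega\bar h_r^k(y)\d\mu(y)\,d(x,\bar x)^\beta\norm{v_1}_\alpha\cdots\norm{v_k}_\alpha.
\end{align*}
This gives $[\sU^k(u)v_1\cdots v_k]_\beta\leq\int_\Omega\bar h_r^k(y)\d\mu(y)\prod_{i=1}^k\norm{v_i}_\alpha<\infty$, so each value lies in $C_d^\beta(\Omega_1)$, and combined with the $C_d^0$-bound from \lref{lemuk} the maximum shows $\sU^k(u)\in L_k(C_n^\alpha(\Omega),C_d^\beta(\Omega_1))$.

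For continuity under $(\bar U_3^k)$ I would take $u\in U_\alpha$ and a sequence $(u_l)_{l\in\N}$ with $\lim_{l\to\infty}\norm{u_l-u}_0=0$, fix $r>0$ with $u,u_l\in\bar B_r(0,C_n^0(\Omega))$, and again treat the two parts of the norm separately. The sup-norm part tends to zero directly by \lref{lemuk}. For the seminorm part I would estimate the second difference by $(\bar U_3^k)$ and the nondecreasing modification $\bar c_r^k$ from \rref{rem22}(3):
\begin{align*}
	&\abs{[(\sU^k(u_l)-\sU^k(u))v_1\cdots v_k](x)-[(\sU^k(u_l)-\sU^k(u))v_1\cdots v_k](\bar x)}\\
	\stackrel{\eqref{ub03s}}{\leq}\;&
	\int_\Omega c_r^k(\abs{u_l(y)-u(y)},y)\d\mu(y)\,d(x,\bar x)^\beta\prod_{i=1}^k\norm{v_i}_\alpha
	\stackrel{\eqref{cmod}}{\leq}
	\int_\Omega\bar c_r^k(\norm{u_l-u}_0,y)\d\mu(y)\,d(x,\bar x)^\beta\prod_{i=1}^k\norm{v_i}_\alpha,
\end{align*}
whence $\sup_{\norm{v_i}_\alpha\leq 1}[(\sU^k(u_l)-\sU^k(u))v_1\cdots v_k]_\beta\leq\int_\Omega\bar c_r^k(\norm{u_l-u}_0,y)\d\mu(y)$, which vanishes as $l\to\infty$ by the limit relation $\lim_{\delta\searrow 0}\int_\Omega c_r^k(\delta,y)\d\mu(y)=0$ in $(\bar U_3^k)$. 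Taking the maximum of the two parts yields $\lim_{l\to\infty}\norm{\sU^k(u_l)-\sU^k(u)}_{L_k(C_n^\alpha(\Omega),C_d^\beta(\Omega_1))}=0$, and since $u$ was arbitrary, $\sU^k$ is continuous.

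The main obstacle is conceptual rather than computational: recognizing that the $C_d^\beta$-norm decomposes into a sup-norm piece that is \emph{for free} from \lref{lemuk} (after invoking $(\bar U_1^k)\Rightarrow(U_1^k)$) and a genuinely new $\beta$-seminorm piece controlled by the barred hypotheses, and keeping the $k$-linear factor $\prod_{i=1}^k\norm{v_i}_\alpha$ correctly factored out so that passage to the operator norm via $\sup\max=\max\sup$ is legitimate. Measurability and integrability of the integrands require no new work, being handled exactly as in the step~(I) of \pref{propuwell} and in \lref{lemuk}.
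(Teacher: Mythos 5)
Your proposal is correct and follows essentially the same route as the paper: well-definedness via the $(\bar U_1^k)$-estimate on the $\beta$-seminorm combined with the $C_d^0$-part from \lref{lemuk}, and continuity by bounding the second difference with $(\bar U_3^k)$ and the nondecreasing modification $\bar c_r^k$ from \rref{rem22}(3), then taking the maximum of the two norm components. The only cosmetic difference is that you carry the factor $\prod_{i=1}^k\norm{v_i}_\alpha$ explicitly, whereas the paper restricts at once to $v_1,\ldots,v_k\in\bar B_1(0,C_n^\alpha(\Omega))$.
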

\begin{proof}
	The well-definedness of $\sU^k$ follows as in step (I) from the proof of \tref{thmuwellb}. Let $u\in U_\alpha$ and $(u_l)_{l\in\N}$ denote a sequence in $U_\alpha$ fulfilling the limit relation $\lim_{l\to\infty}\norm{u_l-u}_0=0$. In addition, choose $r>0$ sufficiently large so that the inclusion $u,u_l\in\bar B_r(0,C_n^0(\Omega))$ for each $l\in\N$ holds. Therefore, 
	\begin{align*}
		&
		[(\sU^k(u_l)-\sU^k(u))v_1\cdots v_k](x)-[(\sU^k(u_l)-\sU^k(u))v_1\cdots v_k](\bar x)\\
		\stackrel{\eqref{ukdef}}{=}&
		\int_\Omega\intcc{D_3^kf(x,y,u_l(y))-D_3^kf(x,y,u(y))-[D_3^kf(\bar x,y,u_l(y))-D_3^kf(\bar x,y,u(y))]}\\
		&\qquad\cdot v_1(y)\cdots v_k(y)\d\mu(y)
	\end{align*}
	and after passing to the norm our assumption $(\bar U_3^k)$ results in
	\begin{align*}
		&
		\abs{[(\sU^k(u_l)-\sU^k(u))v_1\cdots v_k](x)-[(\sU^k(u_l)-\sU^k(u))v_1\cdots v_k](\bar x)}\\
		\stackrel{\eqref{ub03s}}{\leq}&
		\int_\Omega c_r^k(\abs{u_l(y)-u(y)},y)\d\mu(y)d(x,\bar x)^\beta
		\stackrel{\eqref{cmod}}{\leq}
		\int_\Omega\bar c_r^k(\norm{u_l-u}_0,y)\d\mu(y)d(x,\bar x)^\beta
		\fall x,\bar x\in\Omega_1.
	\end{align*}
	Consequently, for $v_1,\ldots,v_k\in\bar B_1(0,C_n^\alpha(\Omega))$ we derive the estimate
	\begin{equation}
		[(\sU^k(u_l)-\sU^k(u))v_1\cdots v_k]_\beta
		\leq
		\int_\Omega\bar c_r^k(\norm{u_l-u}_0,y)\d\mu(y). 
		\label{lemukb3}
	\end{equation}
	In particular, given $\eps>0$ there exists a $L_2\in\N$ such that
	\begin{displaymath}
		[(\sU^k(u_l)-\sU^k(u))v_1\cdots v_k]_\beta
		\leq
		\eps
		\fall l\geq L_2,\,v_1,\ldots,v_k\in\bar B_1(0,C_n^\alpha(\Omega)).
	\end{displaymath}
	In conclusion, with \eqref{nolimits} this implies
	\begin{align*}
		\norm{(\sU^k(u_l)-\sU^k(u))v_1\cdots v_k}_\beta
		&=
		\max\set{\norm{(\sU^k(u_l)-\sU^k(u))v_1\cdots v_k}_0,\intcc{(\sU^k(u_l)-\sU^k(u))v_1\cdots v_k}_\beta}\\
		&\leq
		\eps
		\fall v_1,\ldots,v_k\in\bar B_1(0,C_n^\alpha(\Omega))
	\end{align*}
	and thus $\bigl\|\sU^k(u_l)-\sU^k(u)\bigr\|_{L_k(C_n^\alpha(\Omega),C_d^\beta(\Omega_1))}\leq\eps$ for all $l\geq\max\set{L_1,L_2}$. Therefore, because the function $u$ was arbitrarily chosen, $\sU^k$ is continuous on $U_\alpha$. 
\end{proof}

In contrast to \pref{propuder}, establishing continuous differentiability now requires to invest one additional order of differentiability on the kernel function:
\begin{thm}[continuous differentiability of $\sU$]\label{thmuderb}
	Let $m\in\N$. Assume that $(U_0^k,\bar U_1^k,U_2^k,\bar U_3^k)$ hold for all $0\leq k\leq m$ on a convex set $Z\subseteq\R^n$. Then an Urysohn operator $\sU:U_\alpha\to C_d^\beta(\Omega_1)$ is $m$-times continuously differentiable with $D^k\sU=\sU^k$ for every $1\leq k\leq m$. 
\end{thm}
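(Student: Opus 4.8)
The plan is to run the same induction as in the proof of \pref{propuder}, but carrying the target norm of $C_d^\beta(\Omega_1)$ instead of $C_d^0(\Omega_1)$ and invoking \lref{lemukb} in place of \lref{lemuk}. First I would record that the hypotheses $(U_0^k,\bar U_1^k,U_2^k,\bar U_3^k)$ for all $0\leq k\leq m$ guarantee, through \lref{lemukb}, that each multilinear map $\sU^k:U_\alpha\to L_k(C_n^\alpha(\Omega),C_d^\beta(\Omega_1))$ given by \eqref{ukdef} is well-defined and continuous. This is the exact analogue of the role \lref{lemuk} plays in the $C_d^0$-setting, and it is where $(\bar U_3^k)$ is consumed.

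The differentiability step then proceeds verbatim as in step~(I) of \pref{propuder}. Fixing $u\in U_\alpha$ and $h\in C_d^\alpha(\Omega)$ with $u+h\in U_\alpha$, the convexity of $Z$ keeps the segment $u(y)+\theta h(y)$ inside $Z$ for all $y\in\Omega$, $\theta\in[0,1]$, so the Mean Value Theorem \cite[p.~341, Thm.~4.2]{lang:93} together with Fubini's theorem \cite[p.~155, Thm.~14.1]{dibenedetto:16} yields the same integral identity
\begin{displaymath}
	\sU^k(u+h)-\sU^k(u)-\sU^{k+1}(u)h
	=
	\int_0^1\intcc{\sU^{k+1}(u+\theta h)-\sU^{k+1}(u)}h\d\theta,
\end{displaymath}
now read as an equation in $L_k(C_n^\alpha(\Omega),C_d^\beta(\Omega_1))$. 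Setting $r_k(h):=\sup_{\theta\in[0,1]}\norm{\sU^{k+1}(u+\theta h)-\sU^{k+1}(u)}_{L_{k+1}(C_n^\alpha(\Omega),C_d^\beta(\Omega_1))}$, the continuity of $\sU^{k+1}$ from \lref{lemukb} gives $\lim_{h\to 0}r_k(h)=0$ for $0\leq k<m$.

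The only genuinely new ingredient beyond \pref{propuder} is estimating the remainder in the $\beta$-norm rather than the sup-norm. The sup-norm bound $\norm{\cdots}_0\leq r_k(h)\norm{h}_\alpha$ carries over unchanged from \pref{propuder}. For the H\"older seminorm I would fix test vectors $v_1,\ldots,v_k\in\bar B_1(0,C_n^\alpha(\Omega))$, evaluate the identity on them, take the difference at two points $x,\bar x\in\Omega_1$, and move $\abs{\cdot}$ and $d(x,\bar x)^{-\beta}$ inside the $\theta$-integral; each integrand is then bounded by $\norm{\sU^{k+1}(u+\theta h)-\sU^{k+1}(u)}_{L_{k+1}(C_n^\alpha(\Omega),C_d^\beta(\Omega_1))}\norm{h}_\alpha$, yielding the seminorm estimate $r_k(h)\norm{h}_\alpha$. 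Combining the two bounds gives $\norm{\sU^k(u+h)-\sU^k(u)-\sU^{k+1}(u)h}_{L_k(C_n^\alpha(\Omega),C_d^\beta(\Omega_1))}\leq r_k(h)\norm{h}_\alpha$, so $\sU^k$ is differentiable in $u$ with derivative $\sU^{k+1}(u)$.

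Finally, applying this with $k=0$ shows $\sU:U_\alpha\to C_d^\beta(\Omega_1)$ is differentiable with $D\sU=\sU^1$, and induction over $0\leq k<m$ together with the continuity of each $\sU^k$ from \lref{lemukb} upgrades this to $m$-times continuous differentiability with $D^k\sU=\sU^k$. The main obstacle I anticipate is purely bookkeeping: justifying that the Taylor identity genuinely holds as an equation in the Banach space $C_d^\beta(\Omega_1)$, so that the $\beta$-seminorm may legitimately be moved under the $\theta$-integral. This is precisely what the continuity supplied by $(\bar U_3^k)$ secures, since it makes $\theta\mapsto\sU^{k+1}(u+\theta h)$ a continuous, hence Bochner-integrable, $L_{k+1}(C_n^\alpha(\Omega),C_d^\beta(\Omega_1))$-valued map.
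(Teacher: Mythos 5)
Your proof is correct and shares the paper's overall architecture: \lref{lemukb} supplies well-definedness and continuity of the maps $\sU^k$ into $L_k(C_n^\alpha(\Omega),C_d^\beta(\Omega_1))$, the Mean Value Theorem/Fubini identity is taken on the convex set $Z$, a remainder estimate is derived, and induction over $k$ finishes the argument. The one place you genuinely diverge is the bound on the H\"older seminorm of the remainder: you define $r_k(h)=\sup_{\theta\in[0,1]}\norm{\sU^{k+1}(u+\theta h)-\sU^{k+1}(u)}_{L_{k+1}(C_n^\alpha(\Omega),C_d^\beta(\Omega_1))}$ with the \emph{$\beta$-target} operator norm and push it through the integral identity, so that smallness of the full $\norm{\cdot}_\beta$-remainder is inherited entirely from the continuity statement of \lref{lemukb}. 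The paper instead returns to the kernel and applies $(\bar U_3^{k+1})$ a second time, producing the explicit modulus $\rho_k(h)=\int_0^1\int_\Omega\bar c_r^{k+1}(\theta\norm{h}_0,y)\d\mu(y)\d\theta$ and the combined remainder $R_k(h)=\max\set{r_k(h),\rho_k(h)}$ with $r_k$ taken in the $C_d^0$-target norm as in \pref{propuder}. Both routes consume the same hypothesis --- $(\bar U_3^{k+1})$ enters your argument through the proof of \lref{lemukb} rather than directly --- and both are valid; yours is slightly shorter and avoids duplicating the $(\bar U_3^{k+1})$ computation, while the paper's yields a concrete quantitative remainder modulus. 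Your closing concern about moving $[\cdot]_\beta$ under the $\theta$-integral is not a real obstacle: as in the paper one can work pointwise in $x,\bar x\in\Omega_1$ with scalar integrals, apply the triangle inequality, and only afterwards take suprema over $x\neq\bar x$ and over the test vectors, so no Bochner-integrability argument is required.
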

\begin{proof}
	We establish the assertion for $\sU:U_\alpha\to C_d^\beta(\Omega_1)$ first. 
	Let $u\in U_\alpha$ and $h\in C_n^\alpha(\Omega)$ such that $u+h\in U_\alpha$. Moreover, choose $r>0$ so large that $u,u+h\in\bar B_r(0,C_n^0(\Omega))$ holds. 

	(I) Let $0\leq k<m$. Above all, with the function $\bar c_r^k:\R_+
\tm\Omega_1\to\R_+$ in \eqref{cmod} we observe that 
	$$
		\rho_k(h)
		:=
		\int_0^1\int_\Omega
		\bar c_r^{k+1}(\theta\norm{h}_0,y)\d\mu(y)\d\theta
	$$
	satisfies $\lim_{h\to 0}\rho_k(h)=0$. Given arbitrary $x,\bar x\in\Omega_1$, again \cite[p.~341, Thm.~4.2]{lang:93} yields 
	\begin{align*}
		&
		[\sU^k(u+h)-\sU^k(u)-\sU^{k+1}(u)](x)-
		[\sU^k(u+h)-\sU^k(u)-\sU^{k+1}(u)](\bar x)\\
		\stackrel{\eqref{ukdef}}{=}&
		\int_\Omega\int_0^1
		\bigl[D_3^{k+1}f(x,y,u(y)+\theta h(y))-D_3^{k+1}f(x,y,u(y))\\
		&
		\quad-(D_3^{k+1}f(\bar x,y,u(y)+\theta h(y))-D_3^{k+1}f\bigl(\bar x,y,u(y))\bigr)\bigr]\d\theta h(y)\d\mu(y)\\
		=&
		\int_0^1\int_\Omega
		\bigl[D_3^{k+1}f(x,y,u(y)+\theta h(y))-D_3^{k+1}f(x,y,u(y))\\
		&
		\quad-(D_3^{k+1}f\bigl(\bar x,y,u(y)+\theta h(y))-D_3^{k+1}f(\bar x,y,u(y))\bigr)\bigr]h(y)\d\mu(y)\d\theta
	\end{align*}
	due to Fubini's theorem \cite[p.~155, Thm.~14.1]{dibenedetto:16} and using the assumption $(\bar U_3^{k+1})$ we obtain
	\begin{align*}
		&
		\abs{[\sU^k(u+h)-\sU^k(u)-\sU^{k+1}(u)h](x)-[\sU^k(u+h)-\sU^k(u)-\sU^{k+1}(u)h](\bar x)}\\
		\stackrel{\eqref{ub03s}}{\leq}&
		\int_0^1\int_\Omega
		c_r^{k+1}(\theta\abs{h(y)},y)\abs{h(y)}\d\mu(y)\d\theta d(x,\bar x)^\beta\\
		\stackrel{\eqref{cmod}}{\leq}&
		\int_0^1\int_\Omega
		\bar c_r^{k+1}(\theta\norm{h}_0,y)\d\mu(y)\d\theta d(x,\bar x)^\beta\norm{h}_\alpha,
	\end{align*}
	which in turn implies
	\begin{equation}
		\intcc{\sU^k(u+h)-\sU^k(u)-\sU^{k+1}(u)h}_\beta
		\leq
		\rho_k(h)\norm{h}_\alpha.
		\label{nor2}
	\end{equation}
	If we combine this with the inequalities \eqref{nor1}, then $\|\sU^k(u+h)-\sU^k(u)-\sU^{k+1}(u)h\|_\beta
\leq R_k(h)\norm{h}_\alpha$ with the remainder term $R_k(h):=\max\set{r_k(h),\rho_k(h)}$ satisfying the desired limit relation $\lim_{h\to 0}R_k(h)=0$. 
	
	(II) Applying step (I) in case $k=0$ shows that $\sU$ is differentiable on $U_\alpha$ with the derivative $\sU^1$. Given this, mathematical induction yields that $\sU:U_\alpha\to C_d^\beta(\Omega_1)$ is actually $m$-times differentiable with the derivatives $D^k\sU=\sU^k$ for all $0\leq k\leq m$. Their continuity is guaranteed by \lref{lemukb}. 
\end{proof}

We close our general analysis of Urysohn operators with several remarks: 
\begin{rem}[boundedness and continuity of $\sU$]
	(1) The boundedness of Urysohn operators $\sU$ stated in \pref{propuwell} and \tref{thmuwellb} actually means that the $\sU$-images of merely $\norm{\cdot}_0$-bounded subsets $B\subset U_\alpha$ are bounded. This means that the functions in $B$ need not to have uniformly bounded H\"older constants. 

	(2) The continuity statements for the Urysohn operator $\sU$ in \pref{propuwell} and \tref{thmuwellb}, as well as for its derivatives $D^k\sU$ in \pref{propuder} and \tref{thmuderb} are to be understood in the following strong form: Already convergence in the domain $U_\alpha$ w.r.t.\ the norm $\norm{\cdot}_0$ is sufficient for convergence of the $\sU$-values in the norm $\norm{\cdot}_0$ resp.\ $\norm{\cdot}_\beta$. A corresponding statement applies to both \cref{corulip} and \ref{cor25}. 
\end{rem}

\begin{rem}[Urysohn operators $\sU:U_\alpha\to C_d^\gamma(\Omega_1)$]
	The above statements extend to Urysohn operators mapping into the $\gamma$-H\"older functions over bounded metric spaces $\Omega_1$. This is due to the corresponding representation $\sI_\beta^\gamma\sU:U_\alpha\to C_d^\gamma(\Omega_1)$, where the embedding $\sI_\beta^\gamma$ from \tref{thmembed} is continuous.
\end{rem}

\begin{rem}[Nystr\"om methods]
	Let $\Omega^{(l)}$ be a discrete subset of a compact set $\Omega_1\subset\R^\kappa$ and suppose $w_\eta\geq 0$ are nonnegative reals, $\eta\in\Omega^{(l)}$. In \eref{exleb} resp.\ \ref{exnyst} we pointed out that both Urysohn operators
	\begin{align*}
		\sU:U_\alpha&\to C_d^\beta(\Omega_1),&
		\sU_l:U_\alpha^{(l)}&\to C_d^\beta(\Omega_1),\\
		\sU(u)&:=\int_{\Omega_1}f(\cdot,y,u(y))\d y,&
		\sU_l(u)&:=\sum_{\eta\in\Omega^{(l)}}w_\eta f(\cdot,\eta,u(\eta))
	\end{align*}
	fit well into our abstract setting, where we abbreviated $U_\alpha^{(l)}:=\set{u:\Omega^{(l)}\to Z\mid u\in C_n^\alpha(\Omega^{(l)})}$. However, when dealing with Nystr\"om methods or for iterating integral operators $\sU_l$ it is desirable to work with Urysohn operators defined on $U_\alpha$ rather than $U_\alpha^{(l)}$. For this purpose, let us introduce the linear operator $E_l:C_n^\alpha(\Omega_1)\to C_n^\alpha(\Omega^{(l)})$ given by $E_lu:=u|_{\Omega^{(l)}}$. It satisfies $E_lU_\alpha\subseteq U_\alpha^{(l)}$ and is easily seen to be bounded with $\norm{E_l}_{C_n^\alpha(\Omega_1),C_n^\alpha(\Omega^{(l)})}\leq 1$. Hence, rather than $\sU_l$ we consider the composition
	\begin{align*}
		\sU_l':U_\alpha&\to C_d^\beta(\Omega_1),&
		\sU_l'(u):=\sU_l(E_lu)=\sum_{\eta\in\Omega^{(l)}}w_\eta f(\cdot,\eta,u(\eta)),
	\end{align*}
	which, under appropriate assumptions on the kernel function $f$, inherits its properties from $\sU_l$. 
\end{rem}
\subsection{Convolutive operators}
\label{sec23}
In our above analysis the H\"older continuity of an image $\sU(u):\Omega_1\to\R^d$ of a general Urysohn operator \eqref{udef} was guaranteed and prescribed by the exponent of the kernel function $f$ in its first variable from assumption $(\bar U_1^0)$. A higher degree of smoothness cannot be expected, as simple examples like the kernel function $f(x,y,z):=f_1(x)$ illustrate, where $\sU(u)(x)=\int_\Omega f_1(x)\d\mu(y)=\mu(\Omega)f_1(x)$ inherits its smoothness from $f_1$. This situation changes for kernel functions of convolution type. Here the smoothness (H\"older continuity, differentiability) of the arguments $u$ transfers to the images $\sU(u)$, i.e.\ such integral operators possess a smoothing property we are about to analyze over the course of this section. Our results generalize those of \cite[pp.~52ff, Sect.~3.4.2]{hackbusch:95} obtained for linear operators. 

To be more precise, let us restrict to a compact interval $\Omega=\Omega_1=[a,b]$, $a<b$, equipped with the $1$-dimensional Lebesgue measure $\mu=\lambda_1$ in \eqref{udef}. Moreover, the kernel function is of the form
$$
	f(x,y,z)=\tilde f(x-y,z)
$$
with a function $\tilde f:[a-b,b-a]\tm Z\to\R^d$. This yields a \emph{convolutive Urysohn operator}
\begin{align}
	\tilde\sU:U_\alpha&\to F([a,b],\R^d),&
	\tilde\sU(u)(x):=\int_a^b\tilde f(x-y,u(y))\d y\fall x\in[a,b]. 
	\label{deftU}
\end{align}
\begin{hypo}
	Let $m\in\N_0$ and $\tilde\Omega:=[a-b,b-a]$. With $0\leq k\leq m$ one assumes: 
	\begin{itemize}
		\item[$(C_0^k)$] The partial derivative $D_2^k\tilde f(y,\cdot):Z\to\R^d$ exists and is continuous for Lebesgue-almost all $y\in\tilde\Omega$, 

		\item[$(C_1^k)$] $D_2^k\tilde f(\cdot,z):\tilde\Omega\to\R^d$ is measurable for all $z\in Z$ and for every $r>0$ there exists an integrable function $\tilde b_r^k:\tilde\Omega\to\R_+$ so that for Lebesgue-almost all $y\in\tilde\Omega$ the following holds: 
		\begin{equation}
			\abs{D_2^k\tilde f(y,z)}\leq\tilde b_r^k(y)\fall z\in Z\cap\bar B_r(0), 
			\label{no16}
		\end{equation}

		\item[$(C_2)$] for all $r>0$ there exists an integrable function $\tilde l_r:\tilde\Omega\to\R_+$, so that for Lebesgue-almost all $y\in\tilde\Omega$ the following holds: 
		\begin{equation}
			\abs{\tilde f(y,z)-\tilde f(y,\bar z)}\leq \tilde l_r(y)\abs{z-\bar z}\fall z,\bar z\in Z\cap\bar B_r(0).
			\label{tildelip}
		\end{equation}
	\end{itemize}
\end{hypo}
Note that the Lipschitz condition $(C_2)$ implies $(C_0^0)$, but also \eqref{no16} for $k=1$ with $\tilde b_r^1=\tilde l_r$. 

Under these assumptions the H\"older continuity of $u\in U_\alpha$ carries over to the values $\tilde\sU(u)$: 
\begin{thm}[H\"older continuity of $\tilde\sU(u)$]\label{thmconv1}
	Assume that $(C_1^0,C_2)$ hold and $\alpha\in(0,1]$. 
	If $u\in U_\alpha$, $r>\norm{u}_0$ and there exists a real $C\geq 0$ satisfying 
	\begin{align}
		\int_x^{\bar x}\tilde b_r^0(y)\d y&\leq C(\bar x-x)^\alpha\fall a-b\leq x\leq\bar x\leq b-a,
		\label{coass}
	\end{align}
	then the image satisfies $\tilde\sU(u)\in C_d^\alpha[a,b]$. 
\end{thm}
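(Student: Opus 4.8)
The plan is to verify the two defining properties of membership in $C_d^\alpha[a,b]$ separately: global boundedness of $\tilde\sU(u)$ and finiteness of its H\"older constant $[\tilde\sU(u)]_\alpha$. Boundedness is immediate from $(C_1^0)$: since $r>\norm{u}_0$ forces $u(y)\in Z\cap\bar B_r(0)$ for every $y$, we obtain $\abs{\tilde\sU(u)(x)}\leq\int_a^b\tilde b_r^0(x-y)\d y$, and after the substitution $s=x-y$ this is at most the finite quantity $\int_{\tilde\Omega}\tilde b_r^0(s)\d s$. The substance of the proof is therefore the H\"older estimate, and the whole point is to exploit the convolution structure $f(x,y,z)=\tilde f(x-y,z)$, because $\tilde f$ is assumed to carry no regularity whatsoever in its first argument.

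For the H\"older bound I would fix $a\leq x<\bar x\leq b$, set $\delta:=\bar x-x\in(0,b-a]$, and align the two convolution integrals by a shift. Concretely, in the integral defining $\tilde\sU(u)(\bar x)$ I substitute $y\mapsto y+\delta$, which turns $\tilde f(\bar x-y,u(y))$ into $\tilde f(x-y,u(y+\delta))$ and moves the domain from $[a,b]$ to $[a-\delta,b-\delta]$. After this alignment both integrands share the same first argument $x-y$, and the difference splits into a bulk part over the overlap $[a,b-\delta]$ together with two boundary strips of length $\delta$, namely $\int_{b-\delta}^b\tilde f(x-y,u(y))\d y$ and $\int_{a-\delta}^a\tilde f(x-y,u(y+\delta))\d y$. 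On the bulk part the first arguments coincide, so $(C_2)$ and the H\"older continuity of $u$ give the pointwise bound $\abs{\tilde f(x-y,u(y))-\tilde f(x-y,u(y+\delta))}\leq\tilde l_r(x-y)[u]_\alpha\delta^\alpha$; integrating and invoking the integrability of $\tilde l_r$ bounds the bulk contribution by $[u]_\alpha\delta^\alpha\int_{\tilde\Omega}\tilde l_r(s)\d s$. Each boundary strip is estimated by $(C_1^0)$ and, after the substitution $s=x-y$, becomes the integral of $\tilde b_r^0$ over an interval of length exactly $\delta$ (namely $[x-b,\bar x-b]$ resp.\ $[x-a,\bar x-a]$), which the hypothesis \eqref{coass} controls by $C\delta^\alpha$. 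Summing the three pieces yields $\abs{\tilde\sU(u)(x)-\tilde\sU(u)(\bar x)}\leq\bigl([u]_\alpha\int_{\tilde\Omega}\tilde l_r(s)\d s+2C\bigr)\delta^\alpha$, hence a finite H\"older constant and the desired inclusion $\tilde\sU(u)\in C_d^\alpha[a,b]$.

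The main obstacle --- and the reason the convolution hypothesis is indispensable --- is precisely that no modulus of continuity is available for $\tilde f$ in its first slot, so a naive difference of the integrands cannot be estimated at all; the shift $y\mapsto y+\delta$ is exactly what converts the inaccessible regularity of $\tilde f$ in $x$ into two tractable quantities, namely the H\"older modulus of $u$ on the bulk and the concentration bound \eqref{coass} of $\tilde b_r^0$ on the two boundary strips. The rest is bookkeeping that I would carry out with some care: one must check that every first argument $x-y$ occurring after the substitutions lies in $\tilde\Omega=[a-b,b-a]$ and that $u(y+\delta)$ is evaluated only for $y+\delta\in[a,b]$. Both follow from $x,\bar x\in[a,b]$ together with $\delta\leq b-a$; in particular the overlap $[a,b-\delta]$ is nonempty (only degenerating to a point when $x=a$, $\bar x=b$, which causes no difficulty).
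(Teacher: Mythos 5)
Your proposal is correct and follows essentially the same route as the paper's proof: the shift substitution $y\mapsto y+\delta$ aligning the first arguments, the decomposition into a bulk term over the overlap plus two boundary strips of length $\delta$, the $(C_2)$-estimate of the bulk via $[u]_\alpha\delta^\alpha$, and the control of the strips by \eqref{coass}. Nothing to add.
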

\begin{proof}
	Let $u\in U_\alpha$ and $r>\norm{u}_0$. 
	
	(I) Let $x\in[a,b]$ be given. Above all, $(C_2)$ implies that $\tilde f(y,\cdot):Z\to\R^d$ is continuous for Lebesgue-almost all $y\in\tilde\Omega$ (i.e.\ the assumption $(C_0^0)$ holds). Combined with the measurability assumed in $(C_1^0)$ we conclude from \cite[p.~62, Lemma~5.1]{precup:02} that $y\mapsto\tilde f(x-y,u(y))$ is measurable. Moreover, due to \eqref{no16} one has the estimate $\abs{\tilde f(x-y,u(y))}\leq\tilde b_r^0(x-y)$, where
	$
		\int_a^b\tilde b_r^0(x-y)\d y
		\leq
		\int_{\tilde\Omega}\tilde b_r^0(\eta)\d\eta
	$
	for all $x\in[a,b]$. Consequently the function $y\mapsto\tilde f(x-y,u(y))$ is integrable. Hence $\tilde\sU(u):[a,b]\to\R^d$ is well-defined. 

	(II) For $x,\bar x\in[a,b]$ with $\bar x=x+\Delta>x$ it results
	\begin{align*}
		\tilde\sU(u)(\bar x)
		&\stackrel{\eqref{deftU}}{=}
		\int_a^b\tilde f(\bar x-y,u(y))\d y
		=
		\int_a^b \tilde f(x+\Delta-y,u(y))\d y\\
		&=
		\int_{a-\Delta}^a\tilde f(x-\eta,u(\eta+\Delta))\d\eta+
		\int_a^{b-\Delta}\tilde f(x-\eta,u(\eta+\Delta))\d\eta
	\end{align*}
	via the substitution $\eta:=y-\Delta$ and analogously 
	$$
		\tilde\sU(u)(x)
		\stackrel{\eqref{deftU}}{=}
		\int_a^{b-\Delta}\tilde f(x-\eta,u(\eta))\d\eta+
		\int_{b-\Delta}^b\tilde f(x-\eta,u(\eta))\d\eta.
	$$
	Whence, the difference $\tilde\sU(u)(\bar x)-\tilde\sU(u)(x)=I_0+I_1+I_2$ can be written as sum of the terms
	\begin{align*}
		I_0&:=\int_a^{b-\Delta}\tilde f(x-\eta,u(\eta+\Delta))-\tilde f(x-\eta,u(\eta))\d\eta,&
		I_1&:=\int_{a-\Delta}^a\tilde f(\eta-\Delta,u(\eta+\Delta))\d\eta,\\
		I_2&:=-\int_{b-\Delta}^b\tilde f(x-\eta,u(\eta))\d\eta
		=
		\int_{x-b}^{x-b+\Delta}\tilde f(y,u(x-y))\d y,
	\end{align*}
	which can be estimated separately as
	\begin{align*}
		\abs{I_0}
		&\leq
		\int_a^{b-\Delta}\abs{\tilde f(x-\eta,u(\eta+\Delta))-\tilde f(x-\eta,u(\eta))}\d\eta
		\stackrel{\eqref{tildelip}}{\leq}
		\int_a^{b-\Delta}\tilde l_r(x-\eta)\abs{u(\eta+\Delta)-u(\eta)}\d\eta\\
		&\leq
		[u]_\alpha\int_a^b\tilde l_r(x-\eta)\d\eta \Delta^\alpha
		\leq
		[u]_\alpha\int_{\tilde\Omega}\tilde l_r(\eta)\d\eta \Delta^\alpha
	\end{align*}
	and \eqref{coass} imply that
	\begin{align*}
		\abs{I_1}
		&\leq
		\int_{a-\Delta}^a\tilde b_r^0(x-\eta)\d\eta
		\leq
		C\Delta^\alpha,&
		\abs{I_2}
		&\leq
		\int_{x-b}^{x-b+\Delta}\tilde b_r^0(y)\d\ y
		\leq
		C\Delta^\alpha.
	\end{align*}
	Hence, with $I_0,I_1$ and $I_2$ also their sum is $\alpha$-H\"older due to \tref{thmsum} and thus $\tilde\sU(u)\in C^\alpha[a,b]$. 
\end{proof}

\begin{thm}[H\"older continuity of $\tilde\sU(u)$]
	Assume that $(C_1^0,C_2)$ hold and $\alpha\in(0,1]$, the kernel function $\tilde f$ and the partial derivative $D_1\tilde f$ exist as a continuous functions on both sets $[a-b,0)\tm Z$ and $(0,b-a]\tm Z$. 
	If $u\in U_\alpha$, $r>\norm{u}_0$ and there exists a $h_0>0$ such that
	$
		\int_{-h_0}^{h_0}\tilde l_r(y)\d y<\infty,
	$
	then $\tilde\sU(u)$ is $\alpha$-H\"older on every subinterval compact in $[a,b]$. 
\end{thm}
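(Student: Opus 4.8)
I would read ``subinterval compact in $[a,b]$'' as a closed interval $[p,q]$ with $a<p\le q<b$, bounded away from the endpoints; this restriction is exactly what will keep the argument off the diagonal $x=y$, where the kernel is only assumed regular (i.e.\ for first arguments $x-y\neq 0$). So I would fix such a $[p,q]$, the function $u\in U_\alpha$ and $r>\norm{u}_0$. Well-definedness of $\tilde\sU(u)$ is inherited verbatim from step (I) of \tref{thmconv1}: $(C_1^0)$ together with $(C_2)\Rightarrow(C_0^0)$ makes $y\mapsto\tilde f(x-y,u(y))$ measurable, while the integrable majorant $\tilde b_r^0$ dominates it. For $x,\bar x\in[p,q]$ I may write $\bar x=x+\Delta$ with $0<\Delta\le\Delta_0$, choosing $\Delta_0>0$ so small that $q-a+\Delta_0<b-a$ and $q-b+\Delta_0<0$ (larger increments being irrelevant for the H\"older seminorm over the bounded set $[p,q]$), and reuse the shift decomposition $\tilde\sU(u)(\bar x)-\tilde\sU(u)(x)=I_0+I_1+I_2$ produced in \tref{thmconv1} via the substitution $\eta:=y-\Delta$.

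The bulk term $I_0=\int_a^{b-\Delta}\intcc{\tilde f(x-\eta,u(\eta+\Delta))-\tilde f(x-\eta,u(\eta))}\d\eta$ has coinciding first arguments, so the Lipschitz condition $(C_2)$ and $u\in U_\alpha$ would give, exactly as before,
\begin{displaymath}
	\abs{I_0}
	\leq
	\int_a^{b-\Delta}\tilde l_r(x-\eta)\abs{u(\eta+\Delta)-u(\eta)}\d\eta
	\leq
	[u]_\alpha\Delta^\alpha\int_{\tilde\Omega}\tilde l_r(\eta)\d\eta.
\end{displaymath}
The finiteness of $\int_{\tilde\Omega}\tilde l_r$ over all of $\tilde\Omega$ is the one spot where the near-diagonal behaviour of the possibly singular kernel enters: it is furnished by $(C_2)$ away from $0$ and by the hypothesis $\int_{-h_0}^{h_0}\tilde l_r(y)\d y<\infty$ near $0$, and it is paired against the factor $\Delta^\alpha$ coming from the H\"older modulus of $u$.

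The crux will be the one-sided boundary terms $I_1=\int_{a-\Delta}^a\tilde f(x-\eta,u(\eta+\Delta))\d\eta$ and $I_2=-\int_{b-\Delta}^b\tilde f(x-\eta,u(\eta))\d\eta$, which \tref{thmconv1} dispatched through the now-absent assumption \eqref{coass}. Here the point to exploit is that on the interior interval $[p,q]$ these terms stay uniformly away from the singularity: for $x\in[p,q]$ and $\eta\in[a-\Delta,a]$ one has $x-\eta\in[p-a,q-a+\Delta_0]\subset(0,b-a]$, and for $\eta\in[b-\Delta,b]$ one has $x-\eta\in[p-b,q-b+\Delta_0]\subset[a-b,0)$. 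Since $u$ is continuous on the compact $[a,b]$, its range $K_u:=u([a,b])\subseteq Z\cap\bar B_r(0)$ is compact, so the assumed continuity of $\tilde f$ on $(0,b-a]\tm Z$ and on $[a-b,0)\tm Z$ would make it bounded on the two compact sets $[p-a,q-a+\Delta_0]\tm K_u$ and $[p-b,q-b+\Delta_0]\tm K_u$, say by $M_1,M_2<\infty$ (the assumed continuity of $D_1\tilde f$ off the diagonal could be used alternatively to produce such local bounds). This would yield $\abs{I_1}\leq M_1\Delta$ and $\abs{I_2}\leq M_2\Delta$.

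Collecting the three estimates and using $\Delta\leq\Delta_0$ together with $\alpha\leq 1$, so that $\Delta\leq\Delta_0^{1-\alpha}\Delta^\alpha$, I would arrive at
\begin{displaymath}
	\abs{\tilde\sU(u)(\bar x)-\tilde\sU(u)(x)}
	\leq
	\Bigl([u]_\alpha\int_{\tilde\Omega}\tilde l_r(\eta)\d\eta+(M_1+M_2)\Delta_0^{1-\alpha}\Bigr)\Delta^\alpha
	\fall x,\bar x\in[p,q],
\end{displaymath}
hence $\intcc{\tilde\sU(u)|_{[p,q]}}_\alpha<\infty$, i.e.\ $\tilde\sU(u)$ is $\alpha$-H\"older on $[p,q]$. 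The genuine obstacle is the diagonal singularity: the shift substitution is precisely what relocates the singular near-diagonal contribution into $I_0$, where the locally integrable weight $\tilde l_r$ meets the H\"older increment of $u$, whereas restricting to an interior subinterval is exactly what keeps the leftover boundary pieces $I_1,I_2$ off the singularity, so that mere continuity of $\tilde f$ (rather than an integrability or a H\"older control of the type \eqref{coass}) suffices to bound them.
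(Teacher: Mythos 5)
Your proof is correct, but it follows a genuinely different route from the paper's. The paper splits the \emph{domain of integration} around the diagonal: for $x$ in a compact $I\subseteq(a,b)$ it writes $\tilde\sU(u)=I_0+I_1+I_2$ with a near-diagonal window $I_0(x)=\int_{x-h}^{x+h}\tilde f(x-y,u(y))\d y=\int_{-h}^{h}\tilde f(\eta,u(x-\eta))\d\eta$, whose H\"older continuity comes from $(C_2)$ and $[u]_\alpha$ exactly as in your bulk term, and two off-diagonal pieces $I_1(x)=\int_a^{x-h}$, $I_2(x)=\int_{x+h}^b$, which are shown to be \emph{differentiable} in $x$ with bounded derivative (this is where the continuity of $D_1\tilde f$ on $[a-b,0)\tm Z$ and $(0,b-a]\tm Z$ is consumed, via \eref{exA2}), and the conclusion follows from \tref{thmsum}. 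You instead recycle the shift decomposition of \tref{thmconv1}, splitting the \emph{difference} $\tilde\sU(u)(x+\Delta)-\tilde\sU(u)(x)$ into a bulk term with coinciding first arguments plus two boundary strips of width $\Delta$ near $y=a$ and $y=b$; on an interior subinterval these strips sit at distance at least $\min\set{p-a,b-q}>0$ from the singularity, so mere boundedness of $\tilde f$ on compact subsets of $(0,b-a]\tm Z$ and $[a-b,0)\tm Z$ gives $\abs{I_1},\abs{I_2}=O(\Delta)=O(\Delta^\alpha)$. Your version is slightly more economical in hypotheses --- it never uses $D_1\tilde f$, only the continuity of $\tilde f$ off $0$ --- while the paper's version additionally yields that the off-diagonal contributions are $C^1$ and keeps every integral of $\tilde l_r$ confined to the window $[-h,h]$ with $h\leq h_0$, which is precisely what the explicit hypothesis $\int_{-h_0}^{h_0}\tilde l_r(y)\d y<\infty$ covers; your bulk estimate instead integrates $\tilde l_r$ over all of $\tilde\Omega$, which is justified by the literal wording of $(C_2)$ (the majorant $\tilde l_r$ is postulated integrable on $\tilde\Omega$) but is the one place where the two arguments consume the integrability assumptions differently. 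Both reductions of the global to the local H\"older estimate (your remark that increments $\Delta>\Delta_0$ are harmless for a bounded function) are standard and fine.
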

\begin{proof}
	Let $u\in U_\alpha$. As in step (I) of the proof to \tref{thmconv1} one shows that $\tilde\sU(u)$ is well-defined. Now suppose that $I\subseteq(a,b)$ is a compact subinterval. For each $x\in I$ we choose $h\in(0,h_0]$ so small that $a\leq x-h$ and $x+h\leq b$ holds. This allows us to represent
	\begin{equation}
		\tilde\sU(u)(x)
		=
		\int_a^b\tilde f(x-y,u(y))\d y
		=
		I_1(x)+I_2(x)+I_2(x)
		\label{usum}
	\end{equation}
	with the functions $I_0,I_1,I_2:I\to\R^d$ given by
	\begin{align*}
		I_0(x)&:=\int_{x-h}^{x+h}\tilde f(x-y,u(y))\d y=\int_{-h}^{h}\tilde f(\eta,u(x-\eta))\d\eta,&
		I_1(x)&:=\int_a^{x-h}\tilde f(x-y,u(y))\d y,\\
		&&
		I_2(x)&:=\int_{x+h}^b\tilde f(x-y,u(y))\d y,
	\end{align*}
	where we applied the substitution $\eta=x-y$ in order to rewrite $I_0(x)$. First, we investigate the parameter integral $I_0$. Thereto, using $(C_2)$ we obtain
	\begin{align*}
		\abs{I_0(x)-I_0(\bar x)}
		&\leq
		\int_{-h}^h\abs{\tilde f(\eta,u(x-\eta))-\tilde f(\eta,u(\bar x-\eta))}\d y
		\stackrel{\eqref{tildelip}}{\leq}
		\int_{-h}^h\tilde l_r(y)\abs{u(x-\eta)-u(\bar x-\eta)}\d y\\
		&\leq
		[u]_\alpha\int_{-h}^h\tilde l_r(y)\d y\abs{x-\bar x}^\alpha\fall x,\bar x\in I
	\end{align*}
	and consequently also $I_0$ is $\alpha$-H\"older on the compact subinterval $I$. Second, the integration variable $y$ in the parameter integrals $I_1(x)$ and $I_2(x)$ satisfies $0<h\leq x-y$ resp.\ $x-y\leq-h<0$. Thus, the functions $I_1,I_2$ are differentiable with the derivatives
	\begin{align*}
		I_1'(x)&=\int_a^{x-h}D_1\tilde f(x-y,u(y))\d y+\tilde f(h,u(x-h)),&
		I_2'(x)&=\int_{x+h}^bD_1\tilde f(x-y,u(y))\d y+\tilde f(-h,u(x+h))
	\end{align*}
	for all $x\in I$. Because these derivatives are bounded on the interval $I$, we conclude from \eref{exA2} that $I_1,I_2$ are $\alpha$-H\"older on $I$. In conclusion, \tref{thmsum} yields that the sum \eqref{usum} is $\alpha$-H\"older. 
\end{proof}

\begin{thm}[continuous differentiability of $\tilde\sU(u)$]
	Assume that $(C_0^1,C_1^1)$ hold on a convex set $Z\subseteq\R^n$, $\tilde f:\tilde\Omega\tm Z\to\R^d$ and $D_2\tilde f(\cdot,z)$ are continuous for all $z\in Z$ and $\alpha\in(0,1]$. If $u:[a,b]\to Z$ is continuously differentiable, then also the image $\tilde\sU(u):[a,b]\to\R^d$ is continuously differentiable with the derivative
	\begin{equation}
		\tilde\sU(u)'(x)
		=
		\tilde f(x-a,u(a))-\tilde f(x-b,u(b))-\int_a^bD_2\tilde f(x-y,u(y))u'(y)\d y
		\fall x\in[a,b].
		\label{uuder}
	\end{equation}
\end{thm}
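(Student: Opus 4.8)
The plan is to circumvent the derivative $D_1\tilde f$ in the first slot---which is not assumed to exist---by shifting the $x$-dependence out of the kernel and into the argument of the continuously differentiable function $u$. Well-definedness of $\tilde\sU(u):[a,b]\to\R^d$ is immediate from the continuity of $\tilde f$: for fixed $x$ the integrand $y\mapsto\tilde f(x-y,u(y))$ is continuous, hence bounded and measurable on the compact interval $[a,b]$, and therefore integrable. The decisive step is the substitution $\eta:=x-y$, which recasts \eqref{deftU} as
\begin{equation*}
	\tilde\sU(u)(x)=\int_{x-b}^{x-a}\tilde f(\eta,u(x-\eta))\d\eta.
\end{equation*}
Now $x$ enters only through the two integration limits and through $u(x-\eta)$, so that differentiating in $x$ will manufacture $u'$ together with $D_2\tilde f$ rather than $D_1\tilde f$.

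Next I would differentiate this parameter integral by the Leibniz rule, separating the two sources of $x$-dependence. The moving endpoints contribute, via the fundamental theorem of calculus and the continuity of $\tilde f$, the boundary terms $\tilde f(x-a,u(a))-\tilde f(x-b,u(b))$, where I used $u(x-(x-a))=u(a)$ and $u(x-(x-b))=u(b)$. Differentiation under the integral sign applied to the inner dependence contributes $\int_{x-b}^{x-a}D_2\tilde f(\eta,u(x-\eta))\,u'(x-\eta)\d\eta$, the chain rule being legitimate for almost all $\eta$ by $(C_0^1)$ because $u$ is differentiable. Undoing the substitution $y=x-\eta$ in this last integral returns it to an integral over $[a,b]$; after tracking the orientation of the limits carefully this produces the integral term of \eqref{uuder}, and assembling the three pieces yields \eqref{uuder}.

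The technical heart, and the step I expect to demand the most care, is the justification of the Leibniz rule under the present weak hypotheses. For the differentiation under the integral sign I would invoke the dominated convergence theorem applied to the difference quotients: fixing $x_0\in[a,b]$ and $r>\norm{u}_0$, the bound $\abs{D_2\tilde f(\eta,u(x-\eta))\,u'(x-\eta)}\leq\tilde b_r^1(\eta)\norm{u'}_0$ from $(C_1^1)$ supplies an integrable majorant independent of $x$ near $x_0$ (note $\abs{u(x-\eta)}\leq\norm{u}_0<r$, so the $z$-argument stays in $Z\cap\bar B_r(0)$), while $(C_0^1)$ together with the continuity of $u$ and $u'$ delivers the pointwise limits of the difference quotients for almost all $\eta$. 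The moving endpoints require in addition only the joint continuity of $(\eta,s)\mapsto\tilde f(\eta,u(s))$, which follows from the assumed continuity of $\tilde f$ on all of $\tilde\Omega\tm Z$; in contrast to the preceding theorem no puncture at $\eta=0$ is admitted here, so the endpoint evaluations are unambiguous and the boundary terms are genuinely continuous in $x$.

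Finally, to upgrade plain differentiability to \emph{continuous} differentiability I would verify that $x\mapsto\tilde\sU(u)'(x)$ is continuous. The two boundary terms are continuous in $x$ by the continuity of $\tilde f$ and $u$, and the integral term is continuous in $x$ by a second appeal to dominated convergence, using once more the majorant $\tilde b_r^1(\eta)\norm{u'}_0$ and the continuity in $x$ of the integrand for almost all $\eta$. Together with the existence of $\tilde\sU(u)'$ this shows that $\tilde\sU(u)$ is continuously differentiable on $[a,b]$. The only genuine subtlety beyond routine bookkeeping is reconciling the separately assumed continuities of $\tilde f$ and of $D_2\tilde f$ in each variable with the joint continuity and domination needed for the two limit passages; the integrable bound $\tilde b_r^1$ is precisely what bridges that gap.
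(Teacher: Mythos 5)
Your proposal is correct and follows essentially the same route as the paper's proof: the substitution $\eta=x-y$ moving the $x$-dependence into $u$ and the integration limits, boundary terms from the moving endpoints, and dominated convergence with the majorant $\tilde b_r^1(\cdot)\norm{u'}_0$ both for differentiating under the integral and for the continuity of $\tilde\sU(u)'$; the paper merely carries out your Leibniz rule by hand, splitting the increment $\tilde\sU(u)(x+h)-\tilde\sU(u)(x)$ into two endpoint strips (treated with the mean value theorem of integral calculus) and an interior difference quotient. One remark: undoing the substitution in $\int_{x-b}^{x-a}D_2\tilde f(\eta,u(x-\eta))u'(x-\eta)\d\eta$ actually yields $+\int_a^bD_2\tilde f(x-y,u(y))u'(y)\d y$, so your computation, tracked carefully, produces the integral term of \eqref{uuder} with the opposite sign --- a discrepancy that is equally present in the paper's own derivation and is confirmed by testing $\tilde f(w,z)=z$.
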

Since the derivative $\tilde\sU(u)'$ is bounded as a continuous function over the compact interval $[a,b]$, it results from \eref{exA2} that $\tilde\sU(u)$ is $\alpha$-H\"older, $\alpha\in(0,1]$. 
\begin{proof}
	Let $u:[a,b]\to Z$ be continuously differentiable and choose $r>\norm{u}_0$. Since $\tilde f$ is continuous, the assumptions $(C_0^0,C_1^0)$ are satisfied and consequently as shown in step (I) of the proof to \tref{thmconv1} the expression $\tilde\sU(u)$ is well-defined. Now fix some $x\in[a,b]$. 

	(I) Claim: \emph{One has the limit relation
	\begin{equation}
		\lim_{h\to 0}
		\int_a^b\frac{1}{h}\intoo{\tilde f(x-y,u(y+h))-\tilde f(x-y,u(y))}\d y
		=
		\int_a^bD_2\tilde f(x-y,u(y))u'(y)\d y. 
		\label{limconv}
	\end{equation}}
	We define the function
	$$
		F(h,y)
		:=
		\begin{cases}
			\tfrac{1}{h}\intoo{\tilde f(x-y,u(y+h))-\tilde f(x-y,u(y))},&h\neq 0,\\
			D_2\tilde f(x-y,u(y))u'(y),&h=0
		\end{cases}
	$$
	having the following properties: First, $F(h,\cdot):[a,b]\to\R^d$ is integrable for every fixed $h$. In order to see this, let us distinguish two cases:\\
	$h\neq 0$: The continuity of $\tilde f$ and $u$ yield that $F(h,\cdot)$ is integrable.\\
	$h=0$: Due to $(C_1^1)$ the derivative $D_2\tilde f(\cdot,z)$ is measurable for all $z\in Z$ and the continuity of $u$ and $u'$ guarantee that $F(0,\cdot)$ is measurable due to \cite[p.~62, Lemma~5.1]{precup:02}. Moreover, for Lebesgue-almost all $y\in[a,b]$ one has $\abs{F(0,y)}=\abs{D_2\tilde f(x-y,u(y))u'(y)}\leq\tilde b_r^1(x-y)\norm{u'}_0$ from \eqref{no16} and due to 
	\begin{equation}
		\int_a^b\tilde b_r^1(x-y)\d y\leq\int_{\tilde\Omega} b_r^1(\eta)\d\eta
		\label{best}
	\end{equation}
	also the function $F(0,\cdot)$ is integrable.\\
	Second, $F(\cdot,y)$ is continuous in $0$, which readily results from the chain rule \cite[p.~337]{lang:93}. Third, applying the Mean Value Theorem \cite[p.~341, Thm.~4.2]{lang:93} twice leads to
	\begin{align*}
		F(h,y)
		&=
		\int_0^1D_2\tilde f\bigl(x-y,u(y)+\theta(u(y+h)-u(y))\bigr)\d\theta\int_0^1u'(y+\theta h)\d\theta
	\end{align*}
	and hence
	$
		\abs{F(h,y)}
		\stackrel{\eqref{no16}}{\leq}
		\int_0^1\tilde b_r^1(x-y)\d\theta\norm{u'}_0
		\leq
		\tilde b_r^1(x-y)\norm{u'}_0
	$
	for all $h$. Thanks to \eqref{best} the right-hand side of this inequality is bounded above by an integrable function independent of $h$ (cf.~$(C_1^1)$). Combining these three aspects, it is a consequence of the dominated convergence theorem \cite[p.~149, Thm.~10.1]{dibenedetto:16} that taking the limit and integration in \eqref{limconv} can be exchanged, which yields the claim. 

	(II) Using the substitution $\eta=x-y$ we obtain the representation
	$$
		\tilde\sU(u)(x)
		\stackrel{\eqref{deftU}}{=}
		\int_a^b\tilde f(x-y,u(y))\d y
		=
		\int_{x-b}^{x-a}\tilde f(\eta,u(x-\eta))\d\eta, 
	$$
	which in turn yields
	\begin{align*}
		&\tilde\sU(u)(x+h)-\tilde\sU(u)(x)
		=
		\int_{x+h-b}^{x+h-a}\tilde f(\eta,u(x+h-\eta))\d\eta
		-
		\int_{x-b}^{x-a}\tilde f(\eta,u(x-\eta))\d\eta\\
		=&
		-\int_{x-b}^{x+h-b}\tilde f(\eta,u(x+h-\eta))\d\eta
		+\int_{x-b}^{x-a}\tilde f(\eta,u(x+h-\eta))-\tilde f(\eta,u(x-\eta))\d\eta
		+\int_{x-a}^{x+h-a}\tilde f(\eta,u(x+h-\eta))\d\eta\\
		=&
		-\int_{x-b}^{x+h-b}\tilde f(\eta,u(x+h-\eta))\d\eta
		-\int_a^b\tilde f(x-y,u(y+h))-\tilde f(x-y,u(y))\d y
		+\int_{x-a}^{x+h-a}\tilde f(\eta,u(x+h-\eta))\d\eta		
	\end{align*}
	for all $h\in[a-x,b-x]$; note that we re-substituted $\eta=x-y$ in the center term of the above sum. The Mean Value Theorem from the integral calculus applies to each continuous component function $\tilde f_i$, $1\leq i\leq d$, in the first and third term in the above sum. Hence, there exist reals 
	\begin{align*}
		\xi_1^i(h)&\in\intoo{\min\set{x-b,x+h-b},\max\set{x-b,x+h-b}},\\
		\xi_2^i(h)&\in\intoo{\min\set{x-a,x+h-a},\max\set{x-a,x+h-a}},
	\end{align*}
	such that the identities
	\begin{align*}
		-\int_{x-b}^{x+h-b}\tilde f_i(\eta,u(x+h-\eta))\d\eta
		&=
		-h\tilde f_i\bigl(\xi_1^i(h),u(x+h-\xi_1^i(h))\bigr),\\
		\int_{x-a}^{x+h-a}\tilde f_i(\eta,u(x+h-\eta))\d\eta
		&=
		h\tilde f_i\bigl(\xi_2^i(h),u(x+h-\xi_2^i(h))\bigr)
	\end{align*}
	hold, which allow us to conclude
		\begin{align*}
		\tfrac{1}{h}\intoo{\tilde\sU(u)_i(x+h)-\tilde\sU(u)_i(x)}
		=&
		\frac{1}{h}\left(
		\int_{x-a}^{x+h-a}\tilde f(\eta,u(x+h-\eta))\d\eta
		-\int_{x-b}^{x+h-b}\tilde f_i(\eta,u(x+h-\eta))\d\eta
		\right.\\
		&
		\left.
		-\int_a^b\tilde f_i(x-y,u(y+h))-\tilde f_i(x-y,u(y))\d y
		\right)\\
		= &
		\tilde f_i\bigl(\xi_2^i(h),u(x+h-\xi_2^i(h))\bigr)
		-
		\tilde f_i\bigl(\xi_1^i(h),u(x+h-\xi_1^i(h))\bigr)\\
		&
		-
		\int_a^b\frac{1}{h}\intoo{\tilde f_i(x-y,u(y+h))-\tilde f_i(x-y,u(y))}\d y
		\fall 1\leq i\leq d. 
	\end{align*}
	Thanks to the limit relations $\lim_{h\to 0}\xi_1^i(h)=x-b$, $\lim_{h\to 0}\xi_2^i(h)=x-a$ for all $1\leq i\leq d$ and returning to vector notation we consequently arrive at
	\begin{eqnarray*}
		&&
		\lim_{h\to 0}\tfrac{1}{h}\intoo{\tilde\sU(u)(x+h)-\tilde\sU(u)(x)}\\
		& = &
		\tilde f(x-a,u(a))-\tilde f(x-b,u(b))
		-
		\lim_{h\to 0}
		\int_a^b\frac{1}{h}\intoo{\tilde f(x-y,u(y+h))-\tilde f(x-y,u(y))}\d y\\
		& \stackrel{\eqref{limconv}}{=} &
		\tilde f(x-a,u(a))-\tilde f(x-b,u(b))
		-
		\int_a^bD_2\tilde f(x-y,u(y))u'(y)\d y. 
	\end{eqnarray*}
	This establishes that the image $\tilde\sU(u)$ is differentiable in $x\in[a,b]$ with the derivative \eqref{uuder}. Hence, in order to show that $\sU(u)'$ is continuous, it suffices to establish the continuity of the parameter integral $x\mapsto\int_a^b\tilde F(x,y)\d y$ with $\tilde F(x,y):=D_2\tilde f(x-y,u(y))u'(y)$. By assumption follows that $\tilde F(\cdot,y)$ is continuous on $[a,b]$. From \cite[p.~62, Lemma~5.1]{precup:02} we conclude that $F(x,\cdot)$ is measurable and because of
	$$
		\abs{\tilde F(x,y)}=\abs{D_2\tilde f(x-y,u(y))u'(y)}
		\stackrel{\eqref{no16}}{\leq}
		\tilde b_r^1(x-y)\norm{u'}_0\fall x\in[a,b]
	$$
	integrable (uniformly in $x$, cf.~\eqref{best}). Then the dominated convergence theorem \cite[p.~149, Thm.~10.1]{dibenedetto:16} shows that $x\mapsto\int_a^b\tilde F(x,y)\d y$ is continuous, and thus $\tilde\sU(u)$ is continuously differentiable. 
\end{proof}
\section{Hammerstein integral operators}
\label{sec3}
Let $\alpha\in[0,1]$. \emph{Hammerstein operators} are of the form
\begin{align}
	\sH:U_\alpha&\to F(\Omega_1,\R^d),&
	\sH(u)&:=\int_\Omega k(\cdot,y)g(y,u(y))\d\mu(y)
	\label{hdef}
\end{align}
and represent a relevant special case of the Urysohn operators studied in \sref{sec2} having the kernel function $f(x,y,z):=k(x,y)g(y,z)$. Nevertheless, we investigate them as composition of Fredholm and Nemytskii operators. For this reason, let us study these operator classes independently first. 
\subsection{Fredholm integral operators}
A \emph{Fredholm operator} is a linear integral operator of the form
\begin{align}
	\sK:C_p^\alpha(\Omega)&\to F(\Omega_1,\R^d),&
	\sK u&:=\int_\Omega k(\cdot,y)u(y)\d\mu(y)
	\label{kdef}
\end{align}
determined by a matrix-valued \emph{kernel} $k:\Omega_1\tm\Omega\to\R^{d\tm p}$. 

Fredholm operators apparently fit in the set-up of \sref{sec2} with kernel function $f(x,y,z)=k(x,y)z$. Nevertheless, we take the opportunity for formulate our assumptions in terms of integrals over the kernels, rather than over the kernels. Then the corresponding counterparts to $(U_1^0)$ and $(U_2^0)$ read as: 
\begin{hypo}
	\begin{itemize}
		\item[$(K_1)$] $\lim_{x\to x_0}\int_\Omega\abs{k(x,y)-k(x_0,y)}\d\mu(y)=0$ for all $x_0\in\Omega_1$, 

		\item[$(K_2)$] $k(x,\cdot)$ is measurable for all $x\in\Omega_1$ and
		$
			\sup_{\xi\in\Omega_1}\int_\Omega\abs{k(\xi,y)}\d\mu(y)<\infty.
		$
	\end{itemize}
\end{hypo}
\begin{prop}[well-definedness of $\sK$]\label{propfreda}
	Assume that $(K_1,K_2)$ hold. Then a Fredholm operator $\sK$ satisfies $\sK\in L(C_p^\alpha(\Omega),C_d^0(\Omega_1))$ and
	$$
		\norm{\sK}_{L(C_p^\alpha(\Omega),C_d^0(\Omega_1))}
		\leq
		\max\set{1,(\diam\Omega)^\alpha}\sup_{\xi\in\Omega_1}\int_\Omega\abs{k(\xi,y)}\d\mu(y).
	$$
\end{prop}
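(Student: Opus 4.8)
The plan is to verify the three constituent claims in turn — linearity, the mapping property $\sK u\in C_d^0(\Omega_1)$, and the norm bound — with linearity being immediate from the linearity of the integral in \eqref{kdef}, so the substance lies in the last two. Throughout I exploit that $\abs{\cdot}$ on the matrices $\R^{d\tm p}$ is taken compatible with the vector norm, i.e.\ $\abs{k(x,y)u(y)}\leq\abs{k(x,y)}\abs{u(y)}$.

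First I would fix $u\in C_p^\alpha(\Omega)$ and show that $y\mapsto k(x,y)u(y)$ is $\mu$-integrable for each $x\in\Omega_1$. Measurability holds because $k(x,\cdot)$ is measurable by $(K_2)$ while $u$, being continuous, is measurable, so the matrix--vector product is measurable (cf.\ \cite[p.~62, Lemma~5.1]{precup:02}). Integrability then follows from the pointwise estimate $\abs{k(x,y)u(y)}\leq\norm{u}_0\abs{k(x,y)}$ together with $\sup_{\xi\in\Omega_1}\int_\Omega\abs{k(\xi,y)}\d\mu(y)<\infty$ from $(K_2)$. Hence $\sK u(x)$ is well-defined for every $x$, and the same estimate yields the uniform bound
\[
  \abs{\sK u(x)}\leq\int_\Omega\abs{k(x,y)}\abs{u(y)}\d\mu(y)\leq\norm{u}_0\sup_{\xi\in\Omega_1}\int_\Omega\abs{k(\xi,y)}\d\mu(y)\fall x\in\Omega_1,
\]
so $\sK u$ is globally bounded. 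The decisive point here is that the dominating function furnished by $(K_2)$ is independent of $x$, which is what makes both this bound and the final norm estimate uniform over $\Omega_1$.

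Next I would establish continuity of $\sK u$, which is where $(K_1)$ enters. Using the linearity of the integrand in $k$, for $x,x_0\in\Omega_1$ one has
\[
  \abs{\sK u(x)-\sK u(x_0)}\leq\int_\Omega\abs{k(x,y)-k(x_0,y)}\abs{u(y)}\d\mu(y)\leq\norm{u}_0\int_\Omega\abs{k(x,y)-k(x_0,y)}\d\mu(y),
\]
and the right-hand side tends to $0$ as $x\to x_0$ by $(K_1)$. Thus $\sK u$ is continuous at every $x_0\in\Omega_1$, and combined with the previous step we obtain $\sK u\in C_d^0(\Omega_1)$; together with linearity this gives $\sK\in L(C_p^\alpha(\Omega),C_d^0(\Omega_1))$ once boundedness is quantified.

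For the norm estimate I would pass to the supremum over $x\in\Omega_1$ in the displayed bound, obtaining $\norm{\sK u}_0\leq\norm{u}_0\sup_{\xi\in\Omega_1}\int_\Omega\abs{k(\xi,y)}\d\mu(y)$. Since the domain carries the $\alpha$-norm, it only remains to compare $\norm{u}_0$ with $\norm{u}_\alpha$: by the very definition of the norm one has $\norm{u}_0\leq\norm{u}_\alpha$, so dividing by $\norm{u}_\alpha$ and taking the supremum over the unit ball already yields $\norm{\sK}_{L(C_p^\alpha(\Omega),C_d^0(\Omega_1))}\leq\sup_{\xi\in\Omega_1}\int_\Omega\abs{k(\xi,y)}\d\mu(y)$. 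As $\max\set{1,(\diam\Omega)^\alpha}\geq 1$, the stated (slightly more conservative) bound follows a fortiori; the diameter factor is exactly the constant one would pick up if one instead routed $\sK$ through the H\"older-scale embedding of \tref{thmembed} rather than the plain comparison $\norm{u}_0\leq\norm{u}_\alpha$. I expect no genuine obstacle in this proof: the only care needed is the $x$-independence of the $(K_2)$-majorant and the bookkeeping of the norm-comparison constant in the last inequality.
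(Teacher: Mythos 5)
Your proof is correct, but it takes a different route from the paper. The paper does not argue directly: it writes $\sK=\sK_0\sI_\alpha^0$, quotes \cite[p.~244, Satz~1]{fenyo:stolle:82} for the fact that $\sK_0\in L(C_p^0(\Omega),C_d^0(\Omega_1))$ with $\norm{\sK_0}\leq M:=\sup_{\xi\in\Omega_1}\int_\Omega\abs{k(\xi,y)}\d\mu(y)$, and then multiplies by the norm bound $\norm{\sI_\alpha^0}\leq\max\set{1,(\diam\Omega)^\alpha}$ of the embedding operator from \tref{thmembed}; that is where the diameter factor in the stated estimate comes from. You instead re-prove the $C^0$-case from scratch (measurability via $(K_2)$ and continuity of $u$, integrability and the uniform bound from the $x$-independent majorant in $(K_2)$, continuity of $\sK u$ from $(K_1)$) and then compare norms directly via $\norm{u}_0\leq\norm{u}_\alpha$. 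Your version is self-contained where the paper leans on a citation, and it correctly observes that the elementary comparison $\norm{u}_0\leq\norm{u}_\alpha$ yields the sharper bound $\norm{\sK}\leq M$, of which the paper's $\max\set{1,(\diam\Omega)^\alpha}M$ is a coarsening inherited from the general embedding constant (which is not sharp when the target seminorm is absent). Both arguments are sound; yours trades brevity for transparency and a marginally better constant.
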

\begin{proof}
	Let us abbreviate $M:=\sup_{\xi\in\Omega_1}\int_\Omega\abs{k(\xi,y)}\d\mu(y)$ and write $\sK_0:C_p^0(\Omega)\to C_d^0(\Omega_1)$ instead of $\sK$. The inclusion $\sK_0\in L(C_n^0(\Omega),C_d^0(\Omega_1))$ with $\norm{\sK_0}\leq M$ is shown in \cite[p.~244, Satz~1]{fenyo:stolle:82}. In case $\alpha\in(0,1]$ we consider the composition $\sK=\sK_0\sI_\alpha^0$, where the embedding operator $\sI_\alpha^0$ from \eqref{noe} satisfies the estimate $\norm{\sI_\alpha^0}\leq\max\set{1,(\diam\Omega)^\alpha}$ by \tref{thmembed}. This implies the remaining assertions. 
\end{proof}

\begin{cor}[compactness of $\sK$]\label{corfreda}
	A Fredholm operator $\sK\in L(C_p^\alpha(\Omega),C_d^0(\Omega_1))$ is compact, provided one of the following holds:
	\begin{itemize}
		\item[(i)] $\alpha\in(0,1]$, 

		\item[(ii)] $\Omega_1$ is compact, $\alpha=0$ and $(K_1)$ holds uniformly in $x_0\in\Omega_1$. 
	\end{itemize}
\end{cor}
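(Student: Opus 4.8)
The plan is to recognize $\sK$ as the special Urysohn operator \eqref{udef} with kernel function $f(x,y,z):=k(x,y)z$ and to deduce both cases from \cref{corcc} on the complete continuity of $\sU$. First I would verify that the Carath{\'e}odory conditions $(U_0^0,U_1^0,U_2^0)$ are met for this $f$: since $z\mapsto k(x,y)z$ is linear, the partial derivative assumption $(U_0^0)$ holds trivially; the estimate $\abs{k(x,y)z-k(x_0,y)z}\leq r\abs{k(x,y)-k(x_0,y)}$ for $z\in Z\cap\bar B_r(0)$ shows $(U_1^0)$ with the choice $h_r^0(x,x_0,y):=r\abs{k(x,y)-k(x_0,y)}$, whose integral tends to $0$ as $x\to x_0$ exactly by $(K_1)$; and $\abs{k(x,y)z}\leq r\abs{k(x,y)}$ combined with the measurability of $k(x,\cdot)$ and the uniform bound in $(K_2)$ yields $(U_2^0)$ with $b_r^0(x,y):=r\abs{k(x,y)}$.

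Having checked the hypotheses, the two assertions follow directly. Under (i) with $\alpha\in(0,1]$, \cref{corcc}(i) gives that $\sK:C_p^\alpha(\Omega)\to C_d^0(\Omega_1)$ is completely continuous; the underlying mechanism is the factorization $\sK=\sK_0\sI_\alpha^0$ through the compact embedding $\sI_\alpha^0$ of \tref{thm10}, so that $\sK$ is the composition of the bounded $\sK_0$ (see \pref{propfreda}) with a compact operator. Under (ii), the assumption that $(K_1)$ holds uniformly in $x_0$ translates into the limit relation \eqref{ua00} holding uniformly in $x_0\in\Omega_1$, because $h_r^0$ is merely $r$ times the integrand appearing in $(K_1)$; this is precisely the extra requirement of \cref{corcc}(ii), whose Arzel{\`a}--Ascoli argument then produces complete continuity.

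Finally, since $\sK$ is linear and bounded by \pref{propfreda}, complete continuity is equivalent to compactness, so the claim follows in both cases. The only step requiring genuine care --- and the closest thing to an obstacle --- is the bookkeeping in case (ii): one must confirm that ``uniform in $x_0$'' in $(K_1)$ indeed yields the uniform version of \eqref{ua00} needed to invoke \cref{corcc}(ii). This is immediate from the explicit form of $h_r^0$, so no real difficulty remains.
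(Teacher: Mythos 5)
Your argument is correct, and your verification of the Carath\'eodory conditions for the kernel function $f(x,y,z):=k(x,y)z$ is accurate: the choices $h_r^0(x,x_0,y):=r\abs{k(x,y)-k(x_0,y)}$ and $b_r^0(x,y):=r\abs{k(x,y)}$ do translate $(K_1,K_2)$ into $(U_1^0,U_2^0)$, and uniformity in $x_0$ is preserved. The paper, however, does not route case~(ii) through the Urysohn machinery: it simply cites the classical linear result \cite[p.~247, Satz~4]{fenyo:stolle:82} for the compactness of $\sK_0\in L(C_p^0(\Omega),C_d^0(\Omega_1))$, and for case~(i) composes $\sK=\sK_0\sI_\alpha^0$ with the compact embedding of \tref{thm10} --- which is exactly the mechanism underlying your appeal to \cref{corcc}(i), so there the two proofs coincide in substance. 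Your treatment of case~(ii) via \cref{corcc}(ii) is a genuinely different (though ultimately still Arzel\`a--Ascoli-based) derivation; what it buys is self-containedness, deducing the linear statement as a special case of the paper's own nonlinear results at the modest cost of checking the hypotheses, whereas the paper's citation is shorter. Your closing observation that, for a linear operator, complete continuity and compactness agree is the right way to bridge the nonlinear corollary back to the linear claim.
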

\begin{proof}
	For compactness of $\sK_0\in L(C_n^0(\Omega),C_d^0(\Omega_1))$ we refer to \cite[p.~247, Satz~4]{fenyo:stolle:82}. In case $\alpha\in(0,1]$ we consider the composition $\sK=\sK_0\sI_\alpha^0$ of the continuous $\sK_0$ and the embedding operator $\sI_\alpha^0$ introduced in \eqref{noe}, which is compact due to \tref{thm10}. 
\end{proof}
\begin{rem}
	If $\Omega_1$ is compact and $k:\Omega_1\tm\Omega\to\R^{d\tm p}$ is continuous, then $(K_1,K_2)$ are fulfilled. Hence, \pref{propfreda} and \cref{corfreda} guarantee that $\sK\in L(C_p^\alpha(\Omega),C_d^0(\Omega_1))$ is compact. 
\end{rem}

In order to handle Fredholm operators which map into the H\"older continuous functions a refinement of assumption $(\bar U_1^0)$ is due:
\begin{hypo}
	Let $\beta\in(0,1]$. 
	\begin{itemize}
		\item[$(\bar K_1)$] There exists a continuous function $\tilde h:\Omega_1^2\to\R_+$ such that 
		\begin{equation}
			\int_\Omega\abs{k(x,y)-k(x_0,y)}\d\mu(y)\leq\tilde h(x,x_0)d(x,\bar x)^\beta\fall x,x_0\in\Omega_1.
			\label{khyp2}
		\end{equation}
	\end{itemize}
\end{hypo}
Obviously, $(\bar K_1)$ implies $(K_1)$. 
\begin{thm}[well-definedness of $\sK$]\label{thmkcom}
	Assume that $(\bar K_1,K_2)$ hold. Then a Fredholm operator $\sK$ satisfies $\sK\in L(C_p^\alpha(\Omega),C_d^\beta(\Omega_1))$ and
	$$
		\norm{\sK}_{L(C_p^\alpha(\Omega),C_d^\beta(\Omega_1))}
		\leq
		\max\set{\max\set{1,(\diam\Omega)^\alpha}\sup_{\xi\in\Omega_1}\int_\Omega\abs{k(\xi,y)}\d\mu(y),\sup_{x,x_0\in\Omega_1}\tilde h(x,x_0)}. 
	$$
\end{thm}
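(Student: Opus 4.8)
The plan is to build directly on \pref{propfreda}, which already disposes of the sup-norm component of the target norm, and then to add only the estimate for the H\"older seminorm $[\sK u]_\beta$. First I would record that $(\bar K_1)$ implies $(K_1)$, as noted immediately after the hypothesis, so that \pref{propfreda} applies and yields $\sK\in L(C_p^\alpha(\Omega),C_d^0(\Omega_1))$ together with
$$
	\norm{\sK u}_0\leq M_0\norm{u}_\alpha,\qquad M_0:=\max\set{1,(\diam\Omega)^\alpha}\sup_{\xi\in\Omega_1}\int_\Omega\abs{k(\xi,y)}\d\mu(y),
$$
for every $u\in C_p^\alpha(\Omega)$. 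In particular $\sK u$ is bounded, and it remains only to control its H\"older seminorm and then to assemble the operator norm.

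For the seminorm, I would fix $u\in C_p^\alpha(\Omega)$ and arbitrary $x,x_0\in\Omega_1$. By linearity of the integral, the definition \eqref{kdef} gives
$$
	\abs{\sK u(x)-\sK u(x_0)}
	\leq
	\int_\Omega\abs{k(x,y)-k(x_0,y)}\abs{u(y)}\d\mu(y)
	\leq
	\norm{u}_0\int_\Omega\abs{k(x,y)-k(x_0,y)}\d\mu(y),
$$
whereupon the estimate \eqref{khyp2} from $(\bar K_1)$ together with the elementary bound $\norm{u}_0\leq\norm{u}_\alpha$ yields $\abs{\sK u(x)-\sK u(x_0)}\leq\tilde h(x,x_0)\,d(x,x_0)^\beta\norm{u}_\alpha$. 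Dividing by $d(x,x_0)^\beta$ and passing to the supremum over $x\neq x_0$ produces
$$
	[\sK u]_\beta\leq\Bigl(\sup_{x,x_0\in\Omega_1}\tilde h(x,x_0)\Bigr)\norm{u}_\alpha.
$$
Hence $\sK u$ is $\beta$-H\"older, so that $\sK u\in C_d^\beta(\Omega_1)$ and $\sK$ is well-defined into this space; linearity is immediate from \eqref{kdef}.

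Finally I would combine the two estimates through the definition of $\norm{\cdot}_\beta$ as a maximum:
$$
	\norm{\sK u}_\beta=\max\set{\norm{\sK u}_0,[\sK u]_\beta}\leq\max\set{M_0,\sup_{x,x_0\in\Omega_1}\tilde h(x,x_0)}\norm{u}_\alpha,
$$
which is precisely the asserted bound and simultaneously confirms $\sK\in L(C_p^\alpha(\Omega),C_d^\beta(\Omega_1))$. There is no genuine obstacle here: the only point worth flagging is that boundedness of $\sK$ as an operator into $C_d^\beta(\Omega_1)$ requires $\sup_{x,x_0\in\Omega_1}\tilde h(x,x_0)<\infty$, which is the reason $(\bar K_1)$ posits $\tilde h$ as a \emph{continuous} function (automatically finite once $\Omega_1$ is compact, the situation of interest). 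The conceptual content is merely that the $\beta$-regularity of the kernel in its first slot, measured in the averaged sense \eqref{khyp2}, transfers verbatim to $\sK u$, and that only the sup-norm of $u$, not its H\"older constant, enters the estimate.
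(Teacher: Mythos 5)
Your proposal is correct and follows essentially the same route as the paper: invoke \pref{propfreda} for the sup-norm part, estimate the $\beta$-H\"older seminorm of $\sK u$ directly from \eqref{khyp2} via $\abs{u(y)}\leq\norm{u}_0\leq\norm{u}_\alpha$, and take the maximum of the two bounds. Your closing remark on the finiteness of $\sup_{x,x_0\in\Omega_1}\tilde h(x,x_0)$ is a sensible observation but does not change the argument.
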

\begin{proof}
	We abbreviate $N:=\sup_{x,x_0\in\Omega_1}\tilde h(x,x_0)$. First, 
	$
		\norm{\sK u}_0\leq M\max\set{1,(\diam\Omega)^\alpha}\norm{u}_\alpha
	$
	holds due to \pref{propfreda} ($M\geq 0$ is defined in its proof). Second, the inequality
	\begin{align*}
		\abs{\sK u(x)-\sK u(\bar x)}
		&\stackrel{\eqref{kdef}}{\leq}
		\int_\Omega\abs{k(x,y)-k(\bar x,y)}\abs{u(y)}\d\mu(y)
		\stackrel{\eqref{khyp2}}{\leq}
		Nd(x,\bar x)^\beta\norm{u}_\alpha
		\fall x,\bar x\in\Omega_1
	\end{align*}
	consequently implies that
	$
		[\sK u]_\beta\leq N\norm{u}_\alpha
	$
	holds. A combination of these two estimates finally guarantees that
	$
		\norm{\sK u}_\beta
		=
		\max\set{\norm{\sK u}_0,[\sK u]_\beta}
		\leq
		\max\set{M\max\set{1,(\diam\Omega)^\alpha},N}\norm{u}_\alpha
	$
	and thus
	$$
		\norm{\sK}_{L(C_n^\alpha(\Omega),C_d^\beta(\Omega_1))}
		\leq
		\max\set{M\max\set{1,(\diam\Omega)^\alpha},N} 
	$$
	holds. 
\end{proof}

\begin{cor}[compactness of $\sK$]\label{corkcom}
	A Fredholm operator $\sK\in L(C_p^\alpha(\Omega),C_d^\gamma(\Omega_1))$ is compact, provided one of the following holds:
	\begin{itemize}
		\item[(i)] $\alpha\in(0,1]$ and $\gamma=\beta$, 

		\item[(ii)] $\Omega_1$ is bounded, $\alpha\in(0,1]$ and $\gamma\in[0,\beta]$, 

		\item[(iii)] $\Omega_1$ is compact, $\gamma\in[0,\beta]$ and $\lim_{x\to x_0}\tilde h(x,x_0)=0$ uniformly in $x_0\in\Omega_1$, 

		\item[(iv)] $\Omega_1$ is compact and $\gamma\in[0,\beta)$.
	\end{itemize}
\end{cor}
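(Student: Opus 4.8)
The plan is to read off compactness from the single factorization
\begin{displaymath}
	\sK:\quad
	C_p^\alpha(\Omega)\xrightarrow{\sI_\alpha^0}C_p^0(\Omega)\xrightarrow{\sK^\beta}C_d^\beta(\Omega_1)\xrightarrow{\sI_\beta^\gamma}C_d^\gamma(\Omega_1),
\end{displaymath}
where $\sK^\beta$ denotes the operator \eqref{kdef} read as a map $C_p^0(\Omega)\to C_d^\beta(\Omega_1)$ and $\sI_\alpha^0$, $\sI_\beta^\gamma$ are the embedding operators of \tref{thmembed}. The workhorse observation is that $\sK^\beta$ is bounded: the proof of \tref{thmkcom} only uses $\abs{u(y)}\leq\norm{u}_0$, so it in fact delivers an estimate of the form $\norm{\sK^\beta u}_\beta\leq c\norm{u}_0$ with a constant $c\geq 0$. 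Since a linear operator is compact as soon as one factor in such a composition is compact while the remaining factors are bounded, it suffices in each case to locate a single compact factor.

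For (i) with $\alpha\in(0,1]$ and $\gamma=\beta$ the last embedding is the identity, and $\sI_\alpha^0:C_p^\alpha(\Omega)\to C_p^0(\Omega)$ is compact by \tref{thm10} (recall that $\Omega$ is compact); composed with the bounded $\sK^\beta$ this renders $\sK$ compact. For (ii) one additionally has $\Omega_1$ bounded, so that $\sI_\beta^\gamma$ is continuous by \tref{thmembed}, and the same compact $\sI_\alpha^0$ still carries the compactness through. For (iv), $\Omega_1$ is compact and $\gamma<\beta$, whence now $\sI_\beta^\gamma$ itself is compact by \tref{thm10}; the preceding factors $\sI_\alpha^0$ and $\sK^\beta$ are merely bounded, but that is enough.

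The delicate case is (iii), where $\alpha$ may equal $0$ (so $\sI_\alpha^0$ is the identity) and $\gamma$ may equal $\beta$ (so $\sI_\beta^\gamma$ is the identity), leaving no compact embedding to lean on. Here I would prove compactness of the middle factor $\sK^\beta:C_p^0(\Omega)\to C_d^\beta(\Omega_1)$ directly by means of the H\"older version of the Arzel\`a--Ascoli theorem \tref{thmaa}. For a bounded set $B\subset C_p^0(\Omega)$ with $\norm{u}_0\leq r$ on $B$, boundedness of $\sK^\beta(B)$ is already in hand, while $(\bar K_1)$ gives
\begin{displaymath}
	\frac{\abs{\sK u(x)-\sK u(x_0)}}{d(x,x_0)^\beta}
	\leq
	\frac{\int_\Omega\abs{k(x,y)-k(x_0,y)}\d\mu(y)}{d(x,x_0)^\beta}\norm{u}_0
	\leq
	r\tilde h(x,x_0)
	\fall u\in B,\,x\neq x_0.
\end{displaymath}
The assumed uniform limit $\lim_{x\to x_0}\tilde h(x,x_0)=0$ thus forces the H\"older difference quotients over $\sK^\beta(B)$ to be uniformly small as $x\to x_0$, which is exactly the equicontinuity required to invoke \tref{thmaa} and conclude relative compactness of $\sK^\beta(B)$. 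With $\Omega_1$ compact, hence bounded, the surrounding embeddings are continuous, and the factorization again yields compactness of $\sK$. I expect the only genuine subtlety to be matching this uniform smallness of the H\"older quotients to the precise equicontinuity hypothesis of \tref{thmaa}; the other three cases reduce to routine applications of the composition rule for compact operators.
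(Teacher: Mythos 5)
Your proposal is correct and follows essentially the same route as the paper: the same factorization $\sK=\sI_\beta^\gamma\sK_0^\beta\sI_\alpha^0$, with cases (i), (ii), (iv) handled by locating a compact embedding ($\sI_\alpha^0$ resp.\ $\sI_\beta^\gamma$ via \tref{thm10}) among bounded factors, and case (iii) settled by proving compactness of the middle factor $C_p^0(\Omega)\to C_d^\beta(\Omega_1)$ directly from $(\bar K_1)$, the uniform decay of $\tilde h$, and the compactness criterion of \tref{thmaa}. The estimate $[\sK u]$-quotient $\leq r\,\tilde h(x,x_0)$ matches the hypothesis of \tref{thmaa} exactly as in the paper's step (I), so no gap remains.
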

\begin{proof}
	We write $\sK_0:C_p^\alpha(\Omega)\to C_d^\beta(\Omega_1)$ and $\sK_\alpha^\gamma:C_p^\alpha(\Omega)\to C_d^\gamma(\Omega_1)$ instead of $\sK$.

	(I) Claim: \emph{If (iii) holds, then $\sK_0^\beta\in L(C_n^0(\Omega),C_d^\beta(\Omega_1))$ is compact.}\\
	Given the unit ball $B:=\bar B_1(0,C_n^0(\Omega))$ we apply the compactness criterion from \tref{thmaa} in order to show that $\sK_0^\beta B\subseteq C_d^\beta(\Omega_1)$ is relatively compact. First, $\sK_0^\beta B$ is bounded due to the above \tref{thmkcom}. Second, given $\eps>0$ we obtain by assumption that there exists a $\delta>0$ such that $d(x,\bar x)<\delta$ yields $\tilde h(x,\bar x)<\eps$ for all $x,\bar x\in\Omega_1$. Hence, the assumption $(\bar K_1)$ implies that
	$$
		\abs{(\sK u)(x)-(\sK u)(\bar x)}
		\stackrel{\eqref{kdef}}{\leq}
		\int_\Omega\abs{k(x,y)-k(\bar x,y)}\d\mu(y)
		\stackrel{\eqref{khyp2}}{\leq}
		\eps d(x,\bar x)^\beta\fall x,\bar x\in\Omega_1
	$$
	and all $u\in B$, which guarantees relative compactness of $\sK_0^\beta B$.

	(II) Under (i) the operator $\sK=\sK_0^0\sI_\alpha^0$ is a composition of the continuous $\sK_0^0$ (see \pref{propfreda}) with the compact mapping $\sI_\alpha^0$ (see \tref{thm10}). Under (ii) one has $\sK=\sI_\beta^\gamma\sK_0$ with the bounded embedding $\sI_\beta^\gamma$ (see \tref{thmembed}) and the compact $\sK_0$ (due to (i)). Under the assumptions (iii) we have $\sK=\sI_\beta^\gamma\sK_0^\beta\sI_\alpha^0$ with bounded embeddings and the compact $\sK_0^\beta$ (thanks to step (I)). Finally, in case (iv) one has $\sK=\sI_\beta^\gamma\sK_0$, where $\sK_0$ is continuous and $\sI_\beta^\gamma$ is compact. Since at least one operator in the above compositions is compact, the compactness of $\sK$ results from \cite[p.~417, Thm.~1.2]{lang:93}.
\end{proof}
\subsection{Nemytskii operators}
\label{sec32}
A \emph{Nemytskii operator}\footnote{also denoted as \emph{composition} or \emph{superposition operator}. Further transcriptions are Nemytskij, Nemyzki, Nemytsky, Nemyckij, Nemyckii, Nemitski, Nemitskii, Nemitsky, Nemickij, Nemickii or Niemytzki} is a mapping of the form
\begin{align}
	\sG:U_\alpha&\to F(\Omega,\R^p),&
	\sG(u)(x)&:=g(x,u(x))\fall x\in\Omega,
	\label{gdef}
\end{align}
which is generated by a function $g:\Omega\tm Z\to\R^p$. Our terminology using the letter 'g' comes from \emph{growth function} met in applications \cite{kot:schaeffer:86,lutscher:19}. 
\begin{hypo}
	Let $m\in\N_0$. With $0\leq k\leq m$ one assumes: 
	\begin{itemize}
		\item[$(N_0^k)$] The partial derivative $D_2^kg:\Omega\tm Z\to L_k(\R^n,\R^p)$ exists and can be continuously extended to $\Omega\tm\overline{Z}$.
	\end{itemize}
\end{hypo}
\begin{prop}[well-definedness of $\sG$]\label{propgwell}
	Assume that $(N_0^0)$ holds. Then a Nemytskii operator $\sG:U_\alpha\to C_p^0(\Omega)$ is well-defined, bounded and continuous. Moreover, $\sG$ is completely continuous, provided $\alpha\in(0,1]$. 
\end{prop}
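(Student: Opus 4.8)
The plan is to reduce everything to the compactness of the products $\Omega\tm K_r$ with $K_r:=\overline{Z}\cap\bar B_r(0)$ together with the properties of the continuous extension $\bar g:\Omega\tm\overline{Z}\to\R^p$ of $g$ supplied by $(N_0^0)$. Since $\overline{Z}$ is closed and $\bar B_r(0)$ is compact, each $K_r$ is compact, so $\bar g$ is uniformly continuous and globally bounded on every compact set $\Omega\tm K_r$; these two facts drive all four assertions.

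First I would establish well-definedness and boundedness. For $u\in U_\alpha$ the map $u:\Omega\to Z$ is continuous, hence $x\mapsto(x,u(x))$ is a continuous map into $\Omega\tm\overline{Z}$, and composing with the continuous $\bar g$ shows that $\sG(u)(x)=g(x,u(x))=\bar g(x,u(x))$ depends continuously on $x$. As $\Omega$ is compact, the continuous $\sG(u)$ is bounded and thus $\sG(u)\in C_p^0(\Omega)$. For boundedness of the operator, let $B\subseteq U_\alpha$ satisfy $\norm{u}_0\leq r$ for all $u\in B$; then $u(x)\in K_r$ for every $x\in\Omega$, so the global bound $M_r:=\sup_{(x,z)\in\Omega\tm K_r}\abs{\bar g(x,z)}$ yields $\norm{\sG(u)}_0\leq M_r$ for all $u\in B$.

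Next I would prove continuity in the strong form noted in the remark. Given $u\in U_\alpha$ and a sequence $(u_l)$ in $U_\alpha$ with $\norm{u_l-u}_0\to 0$, I choose $r$ so large that $\norm{u}_0\leq r$ and $\norm{u_l}_0\leq r$ for all $l$, whence all values $u(x),u_l(x)$ lie in the compact $K_r$. Uniform continuity of $\bar g$ on $\Omega\tm K_r$ gives, for each $\eps>0$, a $\delta>0$ with $\abs{z-\bar z}<\delta\Rightarrow\abs{\bar g(x,z)-\bar g(x,\bar z)}<\eps$ for all $x\in\Omega$. Since $\norm{u_l-u}_0\to 0$, for large $l$ one has $\abs{u_l(x)-u(x)}<\delta$ uniformly in $x$, and therefore $\norm{\sG(u_l)-\sG(u)}_0\leq\eps$, i.e.\ $\lim_{l\to\infty}\norm{\sG(u_l)-\sG(u)}_0=0$.

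Finally, for complete continuity when $\alpha\in(0,1]$, I would factor $\sG=\sG_0\circ\sI_\alpha^0$, where $\sG_0:U_0\to C_p^0(\Omega)$ is the Nemytskii operator on the continuous functions (continuous by the previous step, which nowhere used $\alpha>0$) and $\sI_\alpha^0$ is the embedding of $C_n^\alpha(\Omega)$ into $C_n^0(\Omega)$, compact by \tref{thm10}. A continuous map composed with a compact one is completely continuous by \cite[pp.~25--26, Thm.~2.1(2)]{precup:02}, exactly as in the proof of \cref{corcc}(i). The only delicate point throughout is that $Z$ need not be closed, which is precisely why one argues via the extension $\bar g$ on $\Omega\tm\overline{Z}$ and via the compact sets $K_r=\overline{Z}\cap\bar B_r(0)$ rather than with $g$ directly; once this is observed, every estimate is routine.
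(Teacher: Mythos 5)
Your proposal is correct and follows essentially the same route as the paper: well-definedness and boundedness from continuity/compactness of $\Omega$ and the extension $\bar g$ on compact products, continuity from uniform continuity of $\bar g$ on $\Omega\tm(Z\cap\bar B_r(0))$ (you phrase it with sequences, the paper with an $\eps$--$\delta$ argument around a fixed $u_0$), and complete continuity for $\alpha\in(0,1]$ via the factorization $\sG=\sG_0\circ\sI_\alpha^0$ with the compact embedding from \tref{thm10}. If anything, your explicit use of the compact sets $K_r=\overline{Z}\cap\bar B_r(0)$ for the boundedness estimate is slightly more careful than the paper's appeal to uniform continuity on $\Omega\tm\overline{Z}$.
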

\begin{rem}\label{remcont}
	For $\alpha=0$ and compact intervals $\Omega\subset\R$ a Nemytskii operator $\sG:C^0(\Omega)\to C_p^0(\Omega)$ is well-defined if and only if $g:\Omega\tm\R\to\R^p$ is continuous \cite[Thm.~3.1]{appell:etal:11}. Indeed, due to \cite[Table 1]{appell:etal:11} one has the equivalences:
	\begin{center}
		\begin{tabular}{ccccc}
			&&$g$ is continuous&&\\
			&&$\Updownarrow$&&\\
			$\sG$ is bounded & $\Leftrightarrow$ & $\sG$ is well-defined & $\Leftrightarrow$ & $\sG$ is continuous\\
		\end{tabular}
	\end{center}
\end{rem}
\begin{proof}
	We denote $\sG$ defined on $U_0$ as $\sG_0$. Since every $u\in U_\alpha$ is continuous, also $\sG(u):\Omega\to\R^p$ is continuous and bounded. As a result, $\sG:U_\alpha\to C_p^0(\Omega)$ is well-defined. Let $u_0\in U_\alpha$ and choose $r:=\norm{u_0}_0+1$. Now $g$ can be extended continuously to $\Omega\tm\bar Z$ by assumption $(N_0^0)$ and $g$ is uniformly continuous on $\Omega\tm(Z\cap\bar B_r(0))$. Given $\eps>0$ this means that there exists a $\delta>0$ such that
	\begin{displaymath}
		\begin{cases}
			d(x,\bar x)<\delta,\\
			\abs{z-\bar z}<\delta
		\end{cases}
		\quad\Rightarrow\quad
		\abs{g(x,z)-g(\bar x,\bar z)}<\eps
		\fall x,\bar x\in\Omega,\,z,\bar z\in Z\cap\bar B_r(0).
	\end{displaymath}
	If $u\in U_\alpha\cap B_\delta(u_0,C_n^0(\Omega))$ and $\delta<1$, then $\abs{u(x)}\leq\abs{u_0(x)}+\abs{u(x)-u_0(x)}\leq r$ and consequently
	\begin{displaymath}
		\abs{[\sG(u)-\sG(u_0)](x)}
		=
		\abs{g(x,u(x))-g(x,u_0(x))}
		\leq
		\eps\fall x\in\Omega.
	\end{displaymath}
	Passing to the supremum over $x\in\Omega$ yields $\norm{\sG(u)-\sG(u_0)}_0\leq\eps$, i.e.\ $\sG$ is continuous. The boundedness of $\sG$ results from the uniform continuity of $g$ on $\Omega\tm \overline{Z}$, as well as properties of the norm $\norm{\cdot}_0$. In conclusion, $\sG$ is well-defined, bounded and continuous. 
	
	Thanks to \tref{thm10} the embedding $\sI_\alpha^0$ is compact and therefore $\sG=\sG_0\sI_\alpha^0$ is even completely continuous for $\alpha\in(0,1]$ (see \cite[pp.~25--26, Thm.~2.1(2)]{precup:02}). 
\end{proof}

\begin{hypo}
	Let $\vartheta\in(0,1]$. 
	\begin{itemize}
		\item[$(N_0')$] For every $r>0$ there exists a $l_r'\geq 0$ such that
		\begin{equation}
			\abs{g(x,z)-g(x,\bar z)}\leq l_r'\abs{z-\bar z}^\vartheta
			\fall x\in\Omega,\,z,\bar z\in Z\cap\bar B_r(0). 
			\label{nem02}
		\end{equation}
	\end{itemize}
\end{hypo}
\begin{cor}\label{cornemlip}
	Assume that $g(\cdot,z):\Omega\to\R^p$ is continuous for all $z\in Z$ and $(N_0')$ holds. Then a Nemytskii operator $\sG:U_\alpha\to C_p^0(\Omega)$ is well-defined and H\"older on bounded sets, that is 
	\begin{displaymath}
		\intcc{\sG|_{U_\alpha\cap\bar B_r(0,C_n^0(\Omega))}}_{\vartheta}\leq l_r'\fall r>0.
	\end{displaymath}
\end{cor}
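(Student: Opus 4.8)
The plan is to treat the two claims in turn. Well-definedness of $\sG:U_\alpha\to C_p^0(\Omega)$ is the only part needing an argument; the Hölder estimate is then a direct pointwise consequence of $(N_0')$. Throughout I read $\intcc{\sG|_{\cdots}}_\vartheta$ as the Hölder constant of the operator measured in the $\norm{\cdot}_0$-norm on both the domain and the codomain, consistent with the strong continuity already recorded for $\sU$ in \cref{corulip}.

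For well-definedness I would fix $u\in U_\alpha$ and put $r:=\norm{u}_0$, so that $u(x)\in Z\cap\bar B_r(0)$ for every $x\in\Omega$. It suffices to show that $\sG(u)=g(\cdot,u(\cdot))$ lies in $C_p^0(\Omega)$. For continuity at an arbitrary $x_0\in\Omega$ I split
\[
\abs{\sG(u)(x)-\sG(u)(x_0)}\leq\abs{g(x,u(x))-g(x,u(x_0))}+\abs{g(x,u(x_0))-g(x_0,u(x_0))};
\]
the first summand is bounded by $l_r'\abs{u(x)-u(x_0)}^\vartheta$ via $(N_0')$ and tends to $0$ as $x\to x_0$ because $u$ is continuous, while the second tends to $0$ by the assumed continuity of $g(\cdot,z)$ for the fixed value $z=u(x_0)\in Z$. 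Boundedness of $\sG(u)$ is then automatic since $\Omega$ is compact, whence $\sG(u)\in C_p^0(\Omega)$. In effect this shows that $(N_0')$ together with the separate continuity of $g(\cdot,z)$ forces the joint continuity of $g$ on each product $\Omega\tm(Z\cap\bar B_r(0))$, so that one could alternatively verify $(N_0^0)$ and appeal to \pref{propgwell}.

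For the quantitative bound I would fix $r>0$ and take $u,\bar u\in U_\alpha\cap\bar B_r(0,C_n^0(\Omega))$. Since $u(x),\bar u(x)\in Z\cap\bar B_r(0)$ for every $x\in\Omega$, the inequality $(N_0')$ applies pointwise with $z=u(x)$ and $\bar z=\bar u(x)$, giving
\[
\abs{[\sG(u)-\sG(\bar u)](x)}=\abs{g(x,u(x))-g(x,\bar u(x))}\stackrel{\eqref{nem02}}{\leq}l_r'\abs{u(x)-\bar u(x)}^\vartheta\leq l_r'\norm{u-\bar u}_0^\vartheta
\]
for all $x\in\Omega$. Passing to the supremum over $x\in\Omega$ yields $\norm{\sG(u)-\sG(\bar u)}_0\leq l_r'\norm{u-\bar u}_0^\vartheta$, which is exactly the asserted estimate $\intcc{\sG|_{U_\alpha\cap\bar B_r(0,C_n^0(\Omega))}}_\vartheta\leq l_r'$.

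I do not anticipate any real obstacle: the estimate is a one-line consequence of $(N_0')$, and the only point requiring mild care is the well-definedness, where joint continuity of $g$ must be assembled from its continuity in the first variable and the uniform Hölder control $(N_0')$ in the second. Everything else---boundedness from compactness of $\Omega$ and the supremum step---is routine.
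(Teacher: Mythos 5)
Your proof is correct, and the quantitative half coincides with the paper's own argument (pointwise application of \eqref{nem02} followed by a supremum over $x\in\Omega$); note only that you measure the H\"older constant of $\sG$ against $\norm{u-\bar u}_0^\vartheta$, which is a slightly \emph{stronger} bound than the paper's $l_r'\norm{u-\bar u}_\alpha^\vartheta$ and is exactly the ``strong form'' the author records in the subsequent remark. Where you genuinely diverge is in the well-definedness step. The paper routes this through the appendix: it invokes \tref{thmA5} to get joint continuity of $g$ from separate continuity in $x$ plus the uniform H\"older control in $z$, then uses \lref{lemA1} to extend $g(x,\cdot)$ continuously to $\overline{Z}$ so that $(N_0^0)$ holds, and finally quotes \pref{propgwell}. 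You instead verify continuity of $\sG(u)$ directly by the two-term splitting
$\abs{g(x,u(x))-g(x,u(x_0))}+\abs{g(x,u(x_0))-g(x_0,u(x_0))}$,
which reproves the substance of \tref{thmA5} in the one instance needed and entirely avoids the extension lemma. Your route is more elementary and self-contained, and it sidesteps a mild awkwardness in the paper's citation of \lref{lemA1} (which is stated for open subsets $U\subseteq\Omega$, whereas $Z$ is an arbitrary nonempty subset of $\R^n$; uniform continuity is what really drives the extension). What the paper's route buys in exchange is reuse: by actually establishing $(N_0^0)$ it places $\sG$ under the hypotheses of \pref{propgwell}, so that boundedness, continuity and complete continuity of $\sG$ come along for free rather than having to be argued separately. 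Your aside that one could alternatively verify $(N_0^0)$ is accurate but would still require the extension-to-$\overline{Z}$ step to be spelled out.
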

The same argument in case $\sup_{r>0}l_r'<\infty$ yields a global H\"older condition for $\sG:U_\alpha\to C_p^0(\Omega)$.
\begin{rem}
	Note for $\alpha=0$ and a compact interval $\Omega\subset\R$, Nemytskii operators $\sG:C^0(\Omega)\to C_p^0(\Omega)$ satisfy a local (resp.\ global) Lipschitz condition, if and only if $g:\Omega\tm\R\to\R^p$ does in the second variable. In case $p=1$ even the Lipschitz constants (uniformly in $x\in\Omega$) are the same, see \cite[Thm.~1]{appell:81} and \cite[Thm.~3.2]{appell:etal:11}.
\end{rem}
\begin{proof}
	As a consequence of \tref{thmA5}, $g:\Omega\tm Z\to\R^p$ is continuous and using \lref{lemA1} we can show that $g$ satisfies the assumption $(N_0^0)$. Hence, \pref{propgwell} yields that $\sG$ is well-defined. Moreover, if $r>0$ and $u,\bar u\in U_\alpha$ with $\norm{u}_0,\norm{\bar u}_0\leq r$, then
	\begin{displaymath}
		\abs{[\sG(u)-\sG(\bar u)](x)}
		\stackrel{\eqref{gdef}}{=}
		\abs{g(x,u(x))-g(x,\bar u(x))}
		\stackrel{\eqref{nem02}}{\leq}
		l_r'\abs{u(x)-\bar u(x)}^\vartheta
		\leq
		l_r'\norm{u-\bar u}_\alpha^\vartheta\fall x\in\Omega
	\end{displaymath}
	and passing to the supremum over $x\in\Omega$ implies $\norm{\sG(u)-\sG(\bar u)}_0\leq l_r'\norm{u-\bar u}_\alpha^\vartheta$. 
\end{proof}

We next show that the derivative of a Nemytskii operator is a multiplication operator: 
\begin{lem}\label{lemgk}
	Assume that $(N_0^k)$ holds for some $k\in\N$. 
	Then $\sG^k:U_\alpha\to L_k(C_n^\alpha(\Omega),C_p^0(\Omega))$ given by
	\begin{equation}
		(\sG^k(u)v_1\cdots v_k)(x)
		:=
		D_2^kg(x,u(x))v_1(x)\cdots v_k(x)\fall x\in\Omega,\,v_1,\ldots,v_k\in C_n^\alpha(\Omega)
		\label{gkdef}
	\end{equation}
	is well-defined and continuous.
\end{lem}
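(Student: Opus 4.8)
The plan is to follow the two-step scheme used for \pref{propgwell} and \lref{lemuk}: first verify that $\sG^k(u)$ is a well-defined, bounded $k$-linear operator for each fixed $u\in U_\alpha$, and then establish continuity of the assignment $u\mapsto\sG^k(u)$. Throughout, the decisive tool is the uniform continuity of the continuously extended derivative $D_2^kg:\Omega\tm\overline{Z}\to L_k(\R^n,\R^p)$ on compact subsets, which is exactly what $(N_0^k)$ provides.

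For well-definedness, fix $u\in U_\alpha$ and pick $r\geq\norm{u}_0$, so that $u(x)\in\overline{Z}\cap\bar B_r(0)$ for every $x\in\Omega$. Since $\Omega$ is compact and $\overline{Z}\cap\bar B_r(0)\subseteq\R^n$ is compact, the product $\Omega\tm(\overline{Z}\cap\bar B_r(0))$ is compact, and I set $M_r:=\sup\set{\abs{D_2^kg(x,z)}:x\in\Omega,\,z\in\overline{Z}\cap\bar B_r(0)}<\infty$. For $v_1,\ldots,v_k\in C_n^\alpha(\Omega)$ the map $x\mapsto D_2^kg(x,u(x))v_1(x)\cdots v_k(x)$ is continuous as a composition of continuous maps and hence, $\Omega$ being compact, lies in $C_p^0(\Omega)$; multilinearity in $(v_1,\ldots,v_k)$ is immediate from \eqref{gkdef}. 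The estimate
\begin{displaymath}
	\abs{\sG^k(u)v_1\cdots v_k(x)}\leq\abs{D_2^kg(x,u(x))}\,\abs{v_1(x)}\cdots\abs{v_k(x)}\leq M_r\norm{v_1}_0\cdots\norm{v_k}_0
\end{displaymath}
then yields $\norm{\sG^k(u)}_{L_k(C_n^\alpha(\Omega),C_p^0(\Omega))}\leq M_r$ after passing to the supremum over $x\in\Omega$ and over unit vectors, so $\sG^k(u)$ is a bounded $k$-linear map.

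For continuity I would argue as in \lref{lemuk}, but more simply, since no measure-theoretic splitting via Egoroff's theorem is needed here. Let $u\in U_\alpha$ and $(u_l)_{l\in\N}$ in $U_\alpha$ with $\lim_{l\to\infty}\norm{u_l-u}_0=0$, and choose $r>0$ with $\norm{u}_0,\norm{u_l}_0\leq r$ for all $l$. On the compact set $\Omega\tm(\overline{Z}\cap\bar B_r(0))$ the derivative $D_2^kg$ is uniformly continuous, so for $\eps>0$ there is a $\delta>0$ with $\abs{D_2^kg(x,z)-D_2^kg(x,\bar z)}<\eps$ whenever $\abs{z-\bar z}<\delta$. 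Picking $L\in\N$ with $\norm{u_l-u}_0<\delta$ for $l\geq L$ gives $\abs{u_l(x)-u(x)}<\delta$ for all $x\in\Omega$, hence
\begin{displaymath}
	\abs{[\sG^k(u_l)-\sG^k(u)]v_1\cdots v_k(x)}\leq\abs{D_2^kg(x,u_l(x))-D_2^kg(x,u(x))}\,\abs{v_1(x)}\cdots\abs{v_k(x)}\leq\eps
\end{displaymath}
for all $x\in\Omega$, $l\geq L$ and $v_1,\ldots,v_k\in\bar B_1(0,C_n^\alpha(\Omega))$. Taking the supremum over $x$ and over such unit vectors shows $\norm{\sG^k(u_l)-\sG^k(u)}_{L_k(C_n^\alpha(\Omega),C_p^0(\Omega))}\leq\eps$ for $l\geq L$, which is the desired continuity in its strong form (convergence in $\norm{\cdot}_0$ already suffices).

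The argument is almost entirely routine; the only point that needs care is conceptual rather than computational, namely the repeated use of the continuous extension to $\overline{Z}$ from $(N_0^k)$. Because $Z$ need not be closed, the values $u(x),u_l(x)\in Z$ must be handled on the compact set $\overline{Z}\cap\bar B_r(0)$, and it is precisely the extendability of $D_2^kg$ across $\overline{Z}$ that makes it uniformly continuous and bounded there. Beyond that, one only has to keep track of the multilinear operator norm bound $\abs{D_2^kg(x,u(x))v_1(x)\cdots v_k(x)}\leq\abs{D_2^kg(x,u(x))}\prod_{i=1}^k\abs{v_i(x)}$ and to note $\norm{v_i}_0\leq\norm{v_i}_\alpha$, so that testing against the unit ball of $C_n^\alpha(\Omega)$ suffices.
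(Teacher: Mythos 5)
Your proposal is correct and follows essentially the same route as the paper's proof: continuity of $x\mapsto D_2^kg(x,u(x))v_1(x)\cdots v_k(x)$ for well-definedness, and uniform continuity of $D_2^kg$ on the compact set $\Omega\tm(\overline{Z}\cap\bar B_r(0))$ (via the extension from $(N_0^k)$) for the continuity of $u\mapsto\sG^k(u)$, concluding by passing to suprema over $x$ and over unit vectors $v_1,\ldots,v_k$. Your only addition is the explicit operator-norm bound $M_r$ showing $\sG^k(u)$ is a bounded $k$-linear map, which the paper leaves implicit.
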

\begin{proof}
	Let $v_1,\ldots,v_k\in C_n^\alpha(\Omega)$ be given. 
	With $u\in U_\alpha$ also $x\mapsto D_2^kg(x,u(x))v_1(x)\cdots v_k(x)$ is continuous and consequently $\sG^k(u)v_1\cdots v_k\in C_d^0(\Omega)$ holds, i.e.\ $\sG^k$ is well-defined. Let $(u_l)_{l\in\N}$ be a sequence in $U_\alpha$ with $\lim_{l\to\infty}\norm{u_l-u}_0=0$. Choose $r>0$ so large that $u,u_l\in B_r(0,C_n^0(\Omega))$ for all $l\in\N$. Then $D_2^kg$ is uniformly continuous on $\Omega\tm(Z\cap\bar B_r(0))$ and given $\eps>0$, there exists a $\delta>0$ with
	\begin{displaymath}
		\abs{z-\bar z}<\delta
		\quad\Rightarrow\quad
		\abs{D_2^kg(x,z)-D_2g^k(x,\bar z)}<\eps
		\fall x\in\Omega,\,z,\bar z\in Z\cap\bar B_r(0).
	\end{displaymath}
	Moreover, there exists a $L\in\N$ such that $\abs{u_l(x)-u(x)}\leq\delta$ for all $l\geq L$ and consequently
	\begin{eqnarray*}
		\abs{((\sG^k(u_l)-\sG^k(u))v_1\cdots v_k)(x)}
		& \stackrel{\eqref{gkdef}}{=} &
		\abs{\bigl(D_2^kg(x,u_l(x))-D_2^kg(x,u(x))\bigr)v_1(x)\cdots v_k(x)}\\
		& \leq &
		\abs{D_2^kg(x,u_l(x))-D_2^kg(x,u(x))}\abs{v_1(x)}\cdots\abs{v_k(x)}
		\fall x\in\Omega.
	\end{eqnarray*}
	Passing to the supremum over $x\in\Omega$ yields 
	$
		\norm{[\sG^k(u_l)-\sG^k(u)]v_1\cdots v_k}_0
		\leq
		\eps
	$
	for $v_1,\ldots v_k\in\bar B_1(0,C_n^\alpha(\Omega))$ and, in turn, $\bigl\|\sG^k(u)-\sG^k(u_0)\bigr\|_{L_k(C_n^\alpha(\Omega),C_p^0(\Omega))}\leq\eps$ for all $l\geq L$. Since $u\in U_\alpha$ was arbitrary, $\sG^k$ is continuous. 
\end{proof}

\begin{prop}[continuous differentiability of $\sG$]\label{propgder}
	Let $m\in\N$. Assume that $(N_0^k)$ hold for all $0\leq k\leq m$ on a convex set $Z\subseteq\R^n$. Then a Nemytskii operator $\sG:U_\alpha\to C_p^0(\Omega)$ is $m$-times continuously differentiable with $D^k\sG=\sG^k$ for every $1\leq k\leq m$. 
\end{prop}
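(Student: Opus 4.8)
The plan is to mirror the proof of \pref{propuder} almost verbatim, the decisive simplification being that the Nemytskii operator acts \emph{pointwise}: no $\mu$-integration over $\Omega$ is involved, so neither Fubini's theorem nor the dominated convergence theorem enter. By \lref{lemgk} the candidate derivatives $\sG^k:U_\alpha\to L_k(C_n^\alpha(\Omega),C_p^0(\Omega))$ of \eqref{gkdef} are well-defined and continuous for every $0\leq k\leq m$. Hence it suffices to show that each $\sG^k$ is (Fr\'echet) differentiable at an arbitrary $u\in U_\alpha$ with derivative $\sG^{k+1}(u)$; mathematical induction starting from $k=0$ then yields that $\sG$ is $m$-times differentiable with $D^k\sG=\sG^k$, and the continuity of these derivatives is again provided by \lref{lemgk}.

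For the differentiability of $\sG^k$ I would fix $u\in U_\alpha$ and an increment $h\in C_n^\alpha(\Omega)$ with $u+h\in U_\alpha$. Because $Z$ is convex, the segment $u(x)+\theta h(x)$ stays in $Z$ for all $x\in\Omega$ and $\theta\in[0,1]$, so the Mean Value Theorem \cite[p.~341, Thm.~4.2]{lang:93} applies to $z\mapsto D_2^kg(x,z)$ along it. Evaluating the remainder on test vectors $v_1,\ldots,v_k\in C_n^\alpha(\Omega)$ and using \eqref{gkdef} pointwise in $x$, one writes
\begin{align*}
	&\intcc{\sG^k(u+h)-\sG^k(u)-\sG^{k+1}(u)h}v_1\cdots v_k(x)\\
	=\,&\int_0^1\intcc{D_2^{k+1}g(x,u(x)+\theta h(x))-D_2^{k+1}g(x,u(x))}h(x)v_1(x)\cdots v_k(x)\d\theta
	\fall x\in\Omega,
\end{align*}
which is simply the first-order Taylor remainder of $D_2^kg(x,\cdot)$ tested against the multilinear arguments.

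It then remains to bound this in the operator norm of $L_k(C_n^\alpha(\Omega),C_p^0(\Omega))$. Exactly as in \pref{propuder} I would set
\begin{displaymath}
	r_k(h):=\sup_{\theta\in[0,1]}\norm{\sG^{k+1}(u+\theta h)-\sG^{k+1}(u)}_{L_{k+1}(C_n^\alpha(\Omega),C_p^0(\Omega))},
\end{displaymath}
which satisfies $\lim_{h\to 0}r_k(h)=0$ thanks to the continuity of $\sG^{k+1}$ at $u$ from \lref{lemgk} (every point $u+\theta h$ lies within $\norm{h}_\alpha$ of $u$). Taking absolute values in the displayed identity, passing to the supremum over $x\in\Omega$ and then over $v_1,\ldots,v_k\in\bar B_1(0,C_n^\alpha(\Omega))$, pulling the supremum inside the $\theta$-integral, and estimating $\norm{h}_0\leq\norm{h}_\alpha$, one arrives at
\begin{displaymath}
	\norm{\sG^k(u+h)-\sG^k(u)-\sG^{k+1}(u)h}_{L_k(C_n^\alpha(\Omega),C_p^0(\Omega))}\leq r_k(h)\norm{h}_\alpha,
\end{displaymath}
whence $\sG^k$ is differentiable at $u$ with derivative $\sG^{k+1}(u)$, completing the induction.

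I do not expect a genuine obstacle: the computation is routine and strictly simpler than its Urysohn counterpart. The only point needing care is the passage from the pointwise Taylor estimate to the multilinear operator norm, where the crux is that it is precisely the \emph{operator-norm} continuity of $\sG^{k+1}$ supplied by \lref{lemgk} (rather than mere pointwise continuity of $D_2^{k+1}g$) that forces the remainder $r_k(h)$ to vanish; convexity of $Z$ is likewise indispensable, as it is what legitimizes the Mean Value Theorem along the segment joining $u$ and $u+h$.
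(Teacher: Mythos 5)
Your proposal is correct and follows essentially the same route as the paper's proof: \lref{lemgk} for well-definedness and continuity of $\sG^k$, the Mean Value Theorem along the segment $u+\theta h$ (legitimized by convexity of $Z$), the remainder $r_k(h):=\sup_{\theta\in[0,1]}\|\sG^{k+1}(u+\theta h)-\sG^{k+1}(u)\|$ vanishing by operator-norm continuity, and induction starting at $k=0$. Your version is in fact slightly more careful than the paper's in carrying the test vectors $v_1,\ldots,v_k$ explicitly through the Taylor-remainder identity before passing to the multilinear operator norm.
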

\begin{proof}
	(I) Let $0\leq k<m$. Thanks to \lref{lemgk} the mappings $\sG^k:U_\alpha\to L_k(C_n^\alpha(\Omega),C_p^0(\Omega))$ are well-defined and continuous. Thus, given $u\in U_\alpha$ and $h\in C_n^\alpha(\Omega)$ with $u+h\in U_\alpha$ the remainders
	\begin{displaymath}
		r_k(h)
		:=
		\sup_{\theta\in[0,1]}\norm{\sG^{k+1}(u+\theta h)-\sG^{k+1}(u)}_{L_{k+1}(C_n^\alpha(\Omega),C_p^0(\Omega))}
	\end{displaymath}
	satisfy $\lim_{h\to 0}r_k(h)=0$. Now we obtain from \cite[p.~341, Thm.~4.2]{lang:93} that
	\begin{align*}
		[\sG^k(u+h)-\sG^k(u)-\sG^{k+1}(u)h](x)
		&\stackrel{\eqref{gkdef}}{=}
		D_2^kg(x,u(x)+h(x))-D_2^kg(x,u(x))-D_2^{k+1}g(x,u(x))h(x)\\
		&=
		\int_0^1\intcc{D_2^{k+1}g(x,u(x)+\theta h(x))-D_2^{k+1}g(x,u(x))}h(x)\d\theta, 
	\end{align*}
	consequently 
	\begin{align*}
		\abs{[\sG^k(u+h)-\sG^k(u)-\sG^{k+1}(u)h](x)}
		&\stackrel{\eqref{gkdef}}{\leq}
		\int_0^1\norm{\sG^{k+1}(u+\theta h)-\sG^{k+1}(u)}_{L_{k+1}(C_n^\alpha(\Omega),C_p^0(\Omega))}\d\theta\norm{h}_0\\
		&\leq
		r_k(h)\norm{h}_\alpha
	\end{align*}
	and after passing to the least upper bound over $x\in\Omega$ it results
	\begin{displaymath}
		\norm{\sG^k(u+h)-\sG^k(u)-\sG^{k+1}(u)h}_{L_k(C_n^\alpha(\Omega),C_p^0(\Omega))}
		\leq 
		r_k(h)\norm{h}_\alpha.
	\end{displaymath}
	This establishes that the mapping $\sG^k$ is differentiable in $u$ with the derivative $\sG^{k+1}(u)$ and, in turn, $\sG^{k+1}$ is continuous due to \lref{lemgk}. 

	(II) Applying step (I) in case $k=0$ shows that $\sG$ is continuously differentiable on $U_\alpha$ with the derivative $\sG^1$. Given this, mathematical induction yields that $\sG:U_\alpha\to C_p^0(\Omega)$ is actually $m$-times continuously differentiable with the derivatives $D^k\sG=\sG^k$ for all $0\leq k\leq m$.
\end{proof}

\begin{rem}[boundedness and continuity of $\sG$]
	(1) The boundedness of Nemytskii operators $\sG$ guaranteed in Props.~\ref{propgwell} means that the $\sG$-images of merely $\norm{\cdot}_0$-bounded subsets $B\subset U_\alpha$ are bounded. In particular, the functions in $B$ are not required to have uniformly bounded H\"older constants. 

	(2) The continuity of the Nemytskii operator $\sU$ in \pref{propgwell}, as well as for the derivatives $D^k\sU$ in \pref{propgder} are to be understood in the following strong form: Already convergence in the domain $U_\alpha$ w.r.t.\ the norm $\norm{\cdot}_0$ is sufficient for convergence of the $\sG$-values in the norm $\norm{\cdot}_0$. A corresponding statement holds in \cref{cornemlip}. 
\end{rem}

In contrast to the above situation, Nemytskii operators $\sG$ behave rather differently when mapping into the space of H\"older functions of exponent $\alpha\in(0,1]$. For instance, \cite[Example 3.10]{appell:etal:11} constructs a discontinuous function $g$ (hence $\sG$ fails to map $C^0[0,1]$ into itself by \rref{remcont}) such that $\sG$ maps $C^\alpha[0,1]$ into itself. Below we survey some properties relating the mappings
\begin{align*} 
	g:\Omega\tm\R&\to\R^p,&
	\sG:C^\alpha(\Omega)&\to C_p^\alpha(\Omega), 
\end{align*}
when $\Omega\subset\R^\kappa$ is compact; one denotes $g$ as \emph{autonomous}, if it does not depend on the first argument:
\begin{itemize}
	\item \emph{Well-definedness and boundedness}: In \cite[Thm.~1.1]{chiappinelli:nugari:95} it is shown that the condition
	\begin{equation}
		\forall r>0\,
		\exists k(r)>0:
		\abs{g(x,z)-g(\bar x,\bar z)}\leq k(r)\intoo{\abs{x-\bar x}^\alpha+\tfrac{\abs{z-\bar z}}{r}}		
		\label{no36}
	\end{equation}
	for all $x,\bar x\in\Omega$, $z,\bar z\in\bar B_r(0)$ is equivalent to $\sG$ being well-defined and bounded (see also \cite[Thm.~7.3]{appell:zabrejko:90}). In comparison, a necessary and sufficient condition for $\sG$ to be merely well-defined is more clumsy, restricted to $\Omega=[a,b]$, and given in terms of (see \cite[Thm.~3.8]{appell:etal:11} or \cite[Thm.~7.1]{appell:zabrejko:90})
	\begin{equation}
		\forall(x_0,z_0)\in\Omega\tm\R\,
		\forall r>0\,
		\exists k(r)>0\,
		\exists \delta>0:
		\abs{g(x,z)-g(\bar x,\bar z)}\leq k(r)\intoo{\abs{x-\bar x}^\alpha+\tfrac{\abs{z-\bar z}}{r}}
		\label{no35}
	\end{equation}
	for all $x,\bar x\in\Omega$, $z,\bar z\in\R$ with $x,\bar x\in B_r(x_0)$, $\abs{z-z_0}\leq r\abs{x-x_0}^\alpha$ and $\abs{\bar z-z_0}\leq r\abs{\bar x-x_0}^\alpha$.\\
	If $g$ is autonomous, then the Lipschitz condition \eqref{nem02} with $\vartheta=1$ is even necessary and sufficient for $\sG$ being well-defined, see \cite[Thm.~1]{drabek:75}.\\
	Let $g$ be autonomous and $\Omega=[a,b]$. Now every well-defined $\sG$ is bounded (see \cite[Cor.~2.1]{goebel:sachweh:99}) and $g$ is continuous (see \cite[Thm.~7.5]{appell:zabrejko:90}). 
	\item \emph{Continuity}: If the partial derivative $D_2g$ exists and satisfies
	\begin{equation}
		\begin{split}
			D_2g:\Omega\tm\R\to\R^p&\text{ is $\alpha$-H\"older in the first argument uniformly in}\\
			&\text{ the second argument from compact subsets of $\R$},
		\end{split}
		\label{no35s}
	\end{equation}
	then \eqref{no36} implies that $\sG$ is continuous (cf.~\cite[Thm.~2.2]{nugari:93}). Conversely, the partial derivative $D_2g$ exists, if $\sG$ is continuous and \eqref{no36} is valid (see \cite[Thm.~2.2]{nugari:93}), or if $\sG$ is bounded and $\Omega=[a,b]$ (see \cite[Cor.~2.3]{nugari:93}). A characterization of $\sG$ being uniformly continuous on bounded sets can be found in \cite[Thm.~2.1]{chiappinelli:nugari:95}. For intervals $\Omega=[a,b]$ it follows from \cite[Table 5]{appell:etal:11} that:
	\begin{center}
		\begin{tabular}{ccccc}
			$g$ satisfies \eqref{no36}&$\Rightarrow$&$g$ satisfies \eqref{no35} &$\Leftarrow$&$g$ is continuously differentiable\\
			$\Updownarrow$&&$\Updownarrow$&&$\Uparrow$\\
			$\sG$ is bounded & $\Rightarrow$ & $\sG$ is well-defined & $\Leftarrow$ & $\sG$ is continuous\\
		\end{tabular}
	\end{center}
	More can be said whenever $g$ is autonomous and $\Omega=[a,b]$: Then reproducing \cite[Table 4]{appell:etal:11} the following implications hold:
	\begin{center}
		\begin{tabular}{ccccc}
			&&$g$ is Lipschitz on bounded sets&$\Leftarrow$&$g$ is continuously differentiable\\
			&&$\Updownarrow$&&$\Updownarrow$\\
			$\sG$ is bounded & $\Leftrightarrow$ & $\sG$ is well-defined & $\Leftarrow$ & $\sG$ is continuous\\
		\end{tabular}
	\end{center}

	\item \emph{Lipschitz condition}: It is shown in \cite[Thm.~3.1]{chiappinelli:nugari:95} that $\sG$ is Lipschitz on bounded sets if and only if both $g$ and $D_2g$ satisfy an estimate \eqref{no36}. The necessity to assume the existence of the partial derivatives also arose for Urysohn operators (see \cref{cor25}). Yet, the assumption of a global Lipschitz condition for $\sG$ leads to a degeneracy in $g$. Indeed, \cite{matkowska:84} shows that $\sG$ is globally Lipschitz, if and only if all components $g_1,\ldots,g_p:\Omega\tm\R\to\R$ of $g$ are affine linear, i.e.\ there exist $\bar a_i,\bar b_i\in C^\alpha(\Omega)$ with 
	\begin{equation}
		g_i(x,z)=z\bar a_i(x)+\bar b_i(x)\fall 1\leq i\leq p,\,x\in\Omega,\, z\in\R.
		\label{degen}
	\end{equation}
	Yet, even uniform continuity of $\sG$ is sufficient for \eqref{degen} to hold (see \cite[Thm.~2]{matkowski:09}). Nonetheless, if $\sI_\alpha^\beta\sG$ satisfies a global Lipschitz condition, then $\sup_{x\in\Omega}[g(x,\cdot)]_1<\infty$ (cf.\ \cite[Thm.~2.5]{appell:etal:11} and \eqref{degen} holds in case $\beta=1$ (cf.\ \cite[Thm.~2.6]{appell:etal:11}).\\ 
	Let $g$ be autonomous and $\Omega=[a,b]$: Then a well-defined Nemytskii operator $\sG$ is globally Lipschitz, if and only if $g:\R\to\R^p$ is affine-linear, i.e.\ there exist $\bar a,\bar b\in\R^p$ such that $g(z)=z\bar a+\bar b$ (see \cite[Thm.~2.3(b)]{goebel:sachweh:99}). 

	\item \emph{Continuous differentiability}: If $g(x,\cdot)$ is twice differentiable such that $g,D_2g$ satisfy \eqref{no35} and \eqref{no35s} holds with $D_2^2g$ (instead of $D_2g$), then $\sG$ is continuously differentiable (cf.~\cite[Thm.~4.1]{nugari:93}). Note that also for the continuous differentiability of Urysohn operators we needed to assume that the second order derivative of the kernel function exists (cf.\ \tref{thmuderb}). For a characterization of $\sG$ being continuously differentiable with a derivative being uniformly continuous on bounded sets we refer to \cite[Thm.~4.1]{chiappinelli:nugari:95}. In case $\Omega=[a,b]$, then differentiability of $\sI_\alpha^\beta\sG$ is characterized in \cite[Thm.~7.11]{appell:zabrejko:90}. Furthermore, if $\sG$ is differentiable with a globally bounded derivative, then $g$ is affine linear, i.e.\ $g$ satisfies \eqref{degen}.\\
	For autonomous $g$ and $\Omega=[a,b]$ an elegant characterization holds: The Nemytskii operator $\sG$ is continuously differentiable, if and only if $g:\R\to\R^p$ is twice continuously differentiable (see \cite[Thm.~2.4]{goebel:sachweh:99}).
\end{itemize}
\subsection{Hammerstein operators}
In the following, we understand \emph{Hammerstein operators} \eqref{hdef} as composition
$$
	\sH=\sK\sG:C_n^\alpha(\Omega)\to F(\Omega_1,\R^d)
$$
of Fredholm operators $\sK\in L(C_p^0(\Omega),C_d^\beta(\Omega_1))$ and Nemytskii operators $\sG:C_n^\alpha(\Omega)\to C_p^0(\Omega)$ given in \eqref{kdef} resp.\ \eqref{gdef}. Hence, our above preparations immediately yield properties of~$\sH$: 
\begin{thm}[well-definedness of $\sH$]
	Assume that $(\bar K_1,K_2)$ and $(N_0^0)$ hold. Then a Hammerstein operator $\sH:U_\alpha\to C_d^\beta(\Omega_1)$ is well-defined, bounded and continuous.
\end{thm}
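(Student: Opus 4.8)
The plan is to exploit the composition structure $\sH=\sK\sG$ announced at the start of this subsection and simply read off the three asserted properties from the already established behaviour of the two factors. First I would invoke \pref{propgwell}: since $(N_0^0)$ is assumed, the Nemytskii operator $\sG:U_\alpha\to C_p^0(\Omega)$ is well-defined, bounded and continuous. Second, I would apply \tref{thmkcom} with domain exponent $\alpha=0$ (the case covered by the base of its proof, where $\max\set{1,(\diam\Omega)^0}=1$): under $(\bar K_1,K_2)$ the Fredholm operator is a bounded linear map $\sK\in L(C_p^0(\Omega),C_d^\beta(\Omega_1))$.

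With both factors in hand, the composition $\sH=\sK\sG:U_\alpha\to C_d^\beta(\Omega_1)$ inherits all three properties. Well-definedness is immediate because $\sG$ lands in $C_p^0(\Omega)$, which is precisely the domain of $\sK$, so that $\sH(u)=\sK(\sG(u))\in C_d^\beta(\Omega_1)$. Continuity follows since $\sH$ is a composition of two continuous maps. For boundedness I would recall that here the word boundedness means that $\norm{\cdot}_0$-bounded subsets of $U_\alpha$ have bounded images: $\sG$ sends such sets to bounded subsets of $C_p^0(\Omega)$ by \pref{propgwell}, and the bounded linear operator $\sK$ maps bounded sets to bounded sets, whence $\sK\sG$ does as well.

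It then remains only to confirm that this abstract composition reproduces the integral \eqref{hdef}. Substituting the definitions \eqref{kdef} and \eqref{gdef} gives
\begin{displaymath}
	\sH(u)=\sK(\sG(u))=\int_\Omega k(\cdot,y)\sG(u)(y)\d\mu(y)=\int_\Omega k(\cdot,y)g(y,u(y))\d\mu(y),
\end{displaymath}
as required. The argument involves no genuine obstacle; the only point demanding a little care is the bookkeeping of exponents and value spaces---in particular that the codomain $C_p^0(\Omega)$ of $\sG$ coincides with the domain selected for $\sK$ in \tref{thmkcom}, and that the weak notion of boundedness used throughout (images of merely $\norm{\cdot}_0$-bounded sets) is exactly the one preserved under the composition of a bounded nonlinear map with a bounded linear operator.
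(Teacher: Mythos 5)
Your proposal is correct and coincides with the paper's own proof, which likewise writes $\sH=\sK\sG$ and reads off all three properties from \pref{propgwell} together with \tref{thmkcom} applied with $\alpha=0$. The extra bookkeeping you supply (the notion of boundedness preserved under the composition, and the verification that the composition reproduces \eqref{hdef}) is a harmless elaboration of the same argument.
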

\begin{proof}
	As composition of $\sG:C_n^\alpha(\Omega)\to C_p^0(\Omega)$ and $\sK\in L(C_p^0(\Omega),C_d^\beta(\Omega_1))$, the claims for $\sH=\sK\sG$ result directly from \tref{thmkcom} (with $\alpha=0$) and \pref{propgwell}. 
\end{proof}

\begin{cor}[complete continuity of $\sH$]
	A Hammerstein operator $\sH:U_\alpha\to C_d^\gamma(\Omega_1)$ is completely continuous, provided one of the following holds:
	\begin{itemize}
		\item[(i)] $\alpha\in(0,1]$ and $\gamma=\beta$, 

		\item[(ii)] $\Omega_1$ is bounded, $\alpha\in(0,1]$ and $\gamma\in[0,\beta]$, 

		\item[(iii)] $\Omega_1$ is compact, $\gamma\in[0,\beta]$ and $\lim_{x\to x_0}\tilde h(x,x_0)=0$ uniformly in $x_0\in\Omega_1$, 

		\item[(iv)] $\Omega_1$ is compact and $\gamma\in[0,\beta)$.
	\end{itemize}
\end{cor}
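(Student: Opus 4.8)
The plan is to exploit the factorization $\sH=\sK\sG$ of the Hammerstein operator into the Nemytskii operator $\sG:U_\alpha\to C_p^0(\Omega)$ from \eqref{gdef} and the Fredholm operator $\sK$ from \eqref{kdef}, the standing hypotheses $(\bar K_1,K_2)$ and $(N_0^0)$ of the preceding well-definedness theorem being in force. The guiding observation is that the four cases (i)--(iv) coincide verbatim with those of the compactness \cref{corkcom} for $\sK$, now read with domain exponent $0$ since $\sG$ takes values in $C_p^0(\Omega)$; hence it remains only to distribute the source of complete continuity between the two factors and to invoke the composition principle.

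First I would collect the behavior of the factors: by \pref{propgwell} the operator $\sG:U_\alpha\to C_p^0(\Omega)$ is continuous, and even completely continuous once $\alpha\in(0,1]$, while \tref{thmkcom} (with domain exponent $0$) gives $\sK\in L(C_p^0(\Omega),C_d^\beta(\Omega_1))$. In cases (i) and (ii) the restriction $\alpha\in(0,1]$ makes $\sG$ completely continuous and it suffices that $\sK$ be bounded: under (i) directly with $\gamma=\beta$, and under (ii) after composing with the embedding $\sI_\beta^\gamma$ from \tref{thmembed}, which is bounded because $\Omega_1$ is bounded. In cases (iii) and (iv), where $\alpha$ is unrestricted and $\sG$ is therefore only continuous, the compactness is supplied by $\sK$ itself: \cref{corkcom}(iii), respectively \cref{corkcom}(iv), applies since neither imposes any condition on the domain exponent and both may thus be used with $\alpha=0$.

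In every case the composite $\sH=\sK\sG$ is the composition of a continuous map with a completely continuous (resp.\ compact) one, hence completely continuous by \cite[pp.~25--26, Thm.~2.1(2)]{precup:02}. The only point requiring care -- and the place where the argument could be mishandled -- is the bookkeeping of which factor carries the compactness: for $\alpha=0$ complete continuity of $\sG$ is unavailable, so in cases (iii) and (iv) one must genuinely rely on the compactness of $\sK$, whereas in cases (i) and (ii) it is $\sG$ that provides it. Beyond this matching, no new estimate is needed over those already assembled for the Fredholm and Nemytskii factors.
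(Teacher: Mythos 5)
Your proof is correct and follows essentially the same route as the paper: factor $\sH=\sK\sG$ and observe that in each of the four cases at least one factor is completely continuous (resp.\ compact), so the composition is completely continuous by the cited composition principle. If anything, your bookkeeping for case (ii) --- crediting the compactness to $\sG$ via \pref{propgwell} rather than to $\sK$, whose criterion \cref{corkcom}(ii) would require a positive domain exponent that the factor $\sK\in L(C_p^0(\Omega),C_d^\beta(\Omega_1))$ does not have --- is more careful than the paper's own one-line attribution.
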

\begin{proof}
	It results from \pref{propgwell} (case (i)) and \cref{corkcom} (cases (ii--iii)) that at least one of the functions in the composition $\sH=\sK\sG$ is completely continuous. 
\end{proof}

\begin{thm}[continuous differentiability of $\sH$]
	Let $m\in\N$. Assume that $(\bar K_1,K_2)$ and $(N_0^k)$ hold for all $0\leq k\leq m$ on a convex set $Z\subseteq\R^n$. Then a Hammerstein operator $\sH:U_\alpha\to C_d^\beta(\Omega_1)$ is $m$-times continuously differentiable with $D^k\sH=\sK\sG^k$ for every $1\leq k\leq m$. 
\end{thm}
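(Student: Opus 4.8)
The plan is to exploit the chain rule for Fréchet derivatives applied to the composition $\sH = \sK\sG$, using the fact that $\sK$ is linear and bounded. The key structural observation is that a bounded linear operator is its own derivative, so differentiating the composition reduces entirely to differentiating $\sG$ and then applying $\sK$ on the outside. Concretely, under the standing hypotheses the Nemytskii operator $\sG : C_n^\alpha(\Omega) \to C_p^0(\Omega)$ is $m$-times continuously differentiable with $D^k\sG = \sG^k$ by \pref{propgder} (its hypotheses $(N_0^k)$ for $0 \le k \le m$ on a convex $Z$ are exactly what we assume), while $(\bar K_1, K_2)$ guarantee via \tref{thmkcom} (applied with the continuous-function exponent $\alpha = 0$ in its domain slot) that $\sK \in L(C_p^0(\Omega), C_d^\beta(\Omega_1))$.

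First I would invoke \pref{propgder} to record that $\sG$ is $m$-times continuously differentiable with $D^k\sG = \sG^k$ for $1 \le k \le m$, each $\sG^k$ being the multiplication operator from \eqref{gkdef}. Next I would note that $\sK$, being a bounded linear map, is smooth with $D\sK = \sK$ and all higher derivatives encoded trivially. Then I would apply the chain rule for the first derivative: $D(\sK\sG)(u) = \sK \circ D\sG(u) = \sK\,\sG^1(u)$, which is continuous in $u$ as a composition of the continuous $\sG^1$ with the fixed bounded operator $\sK$. For the higher derivatives I would proceed by induction, using that differentiating $\sK\sG^k$ again only differentiates the $\sG^k$ factor (since $\sK$ is constant as an operator), yielding $D^{k}\sH = \sK\,\sG^{k}$; continuity of each follows because $\sG^k$ is continuous by \lref{lemgk} and post-composition with the bounded $\sK$ preserves continuity.

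**The one genuine subtlety** I would flag is the identification $D^k(\sK\sG) = \sK\,\sG^k$ as $k$-linear maps: abstractly the chain rule for higher derivatives (Faà di Bruno) produces a sum over partitions, but because the outer map $\sK$ is \emph{linear} all multilinear terms involving second or higher derivatives of $\sK$ vanish, collapsing the formula to the single clean term $\sK \circ D^k\sG$. I would make this precise by the standard fact that for a bounded linear $\sK$ one has $\sK \circ D^k\sG(u)[v_1,\dots,v_k] = \sK\bigl(\sG^k(u)v_1\cdots v_k\bigr)$, and that this is continuous in $u$ by \lref{lemgk} together with $\norm{\sK\,\sG^k(u) - \sK\,\sG^k(\bar u)} \le \norm{\sK}\,\norm{\sG^k(u) - \sG^k(\bar u)}$.

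Thus the proof is essentially a bookkeeping argument assembling three earlier results, and **the main obstacle is not analytic but notational** — ensuring the multilinear-map identities are stated correctly and that the exponent conventions (the $\alpha=0$ slot for $\sK$'s domain versus the $\alpha$-Hölder domain of $\sG$) line up so the composition $\sH = \sK\sG : U_\alpha \to C_d^\beta(\Omega_1)$ typechecks. Since every ingredient is already proved, I expect no serious difficulty beyond writing down the chain rule carefully.
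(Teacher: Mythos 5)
Your proposal is correct and follows exactly the paper's own argument: the paper's proof is the one-line observation that the claim follows from the chain rule, \tref{thmkcom} with $\alpha=0$ (giving $\sK\in L(C_p^0(\Omega),C_d^\beta(\Omega_1))$), and \pref{propgder} (giving $D^k\sG=\sG^k$). Your additional discussion of why the higher-order chain rule collapses to $\sK\sG^k$ for a linear outer map is a sound elaboration of the same route.
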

\begin{proof}
	This results from the chain rule \cite[p.~337]{lang:93}, \tref{thmkcom} (with $\alpha=0$) and \pref{propgder}. 
\end{proof}

\begin{rem}[convolutive Hammerstein operators]
	Suppose a growth function $g:[a,b]\tm Z\to\R^p$ generates a Nemytskii operator $\sG$ mapping into $C_p^\alpha[a,b]$. Then the smoothing properties from \sref{sec23} extend to \emph{convo\-lutive Hammerstein operators} $\tilde\sK u(v):=\int_a^b\tilde k(x-y)g(y,u(y))\d y$ with an ambient kernel $\tilde k:[a-b,b-a]\to\R^{d\tm p}$. 
\end{rem}
\begin{appendix}
\section{H\"older continuous functions}
\label{appA}
The definition of continuity for a function $u$ in e.g.\ a point $x_0$ is not quantitative in the sense that it provides no information on how fast its values $u(x)$ approach $u(x_0)$ as $x\to x_0$. In consequence, the modulus of continuity $\omega:\R_+\to\R_+$ of a continuous function $u$ satisfying the estimate $\norm{u(x)-u(x_0)}\leq\omega(d(x,x_0))$ may decrease arbitrarily slowly. For this reason, the space of continuous functions is often not suitable for a quantitative analysis in fields such as numerical analysis or partial differential equations. A straightforward and feasible way to strengthen the notion of continuity of $u$ is to assume that its modulus of continuity is proportional to a power of $d(x,x_0)^\alpha$ for some exponent $\alpha\in(0,1]$. Such functions are denoted as H\"older continuous and in the focus of this appendix. 

Our overall setting is as follows. Let $(\Omega,d)$ be a metric space and $(Y,\norm{\cdot})$ be a normed space over $\K$, which stands for the real or complex field. 

A function $u:\Omega\to Y$ is said to be \emph{$\alpha$-H\"older} (with \emph{H\"older exponent} $\alpha\in(0,1]$), if it satisfies
\begin{displaymath}
	[u]_\alpha:=\sup_{\substack{x,\bar x\in\Omega\\ x\neq\bar x}}
	\frac{\norm{u(x)-u(\bar x)}}{d(x,\bar x)^\alpha}<\infty;
\end{displaymath}
the finite quantity $[u]_\alpha\geq 0$ is called \emph{H\"older constant} of $u$. One speaks of a \emph{H\"older continuous} function $u$, if it is $\alpha$-H\"older for some $\alpha\in(0,1)$ and in case $\alpha=1$ one denotes $u$ as \emph{Lipschitz continuous} with \emph{Lipschitz constant} $[u]_1$ --- a comprehensive approach to this class of functions is given in \cite{cobzas:etal:19}. For the set of all such $\alpha$-H\"older functions $u:\Omega\to Y$ we write $C^\alpha(\Omega,Y)$, supplemented by $C^0(\Omega,Y)$ for the linear space of continuous functions. It is convenient to denote a continuous function as $0$-H\"older, and unless indicated otherwise, let us assume $\alpha\in[0,1]$ throughout. 
\begin{rem}
	(1) A function $u:\Omega\to Y$ is constant, if and only if its H\"older constant vanishes.

	(2) For $\alpha\in(0,1]$ there is an evident relation between H\"older functions and Lipschitz functions: Indeed, $u:(\Omega,d)\to Y$ is $\alpha$-H\"older, if and only if $u:(\Omega,d_\alpha)\to Y$ is Lipschitz with the metric $d_\alpha:\Omega\tm\Omega\to\R_+$ given by $d_\alpha(x,\bar x):=d(x,\bar x)^\alpha$. Of course the metrics $d$ and $d_\alpha$ on $\Omega$ are not equivalent. 

	(3) One does restrict to exponents $\alpha\in(0,1]$ for the following reason. Suppose that a function $u:\Omega\to Y$ on an open subset $\Omega\subseteq\R^\kappa$ satisfies $[u]_\alpha<\infty$ for an exponent $\alpha>1$. Then 
	\begin{displaymath}
		\norm{u(x)-u(\bar x)-0(x-\bar x)}
		=
		\norm{u(x)-u(\bar x)}
		\leq
		[u]_\alpha\abs{x-\bar x}^{\alpha-1}\abs{x-\bar x}\fall x,\bar x\in\Omega
	\end{displaymath}
	yields that $u$ is differentiable on $\Omega$ with derivative $0$ and thus constant on the components of $\Omega$. 

	(4) Suppose that $\Omega$ has a finite, positive diameter and that $\phi:[0,1]\to\R_+$ is a function satisfying $\phi(0)=0$, $\phi(1)=1$ such that $t\mapsto\phi(t)$ and $t\mapsto\tfrac{t}{\phi(t)}$ are positive and increasing on $(0,1)$. Then $u:\Omega\to Y$ is called \emph{generalized H\"older}, if
	$
		\sup_{x,\bar x\in\Omega,x\neq\bar x}
		\norm{u(x)-u(\bar x)}\phi\bigl(\tfrac{d(x,\bar x)}{\diam\Omega}\bigr)^{-1}
		<\infty
	$
	(see \cite[Ch.~7]{appell:zabrejko:90} and \cite{banas:nalepa:16}). In case $\phi(t):=t^{\alpha}$, $\alpha\in(0,1]$ this reduces to the situation studied here. 

	(5) Differentiable functions on $\Omega\subseteq\R^\kappa$ whose $m$th derivative satisfies an $\alpha$-H\"older condition, and associated function spaces $C^{m,\alpha}(\Omega,Y)$, are addressed in \cite[pp.~30ff, Sect.~1.5]{fiorenza:16} or \cite[pp.~51ff, Sect.~4.1]{gilbarg:trudinger:01}. 

	(6) The inner structure of H\"older spaces from an abstract Banach spaces perspective is studied in \cite{kalton:04}. 
\end{rem}

\begin{SCfigure}[2]
	\includegraphics[scale=0.5]{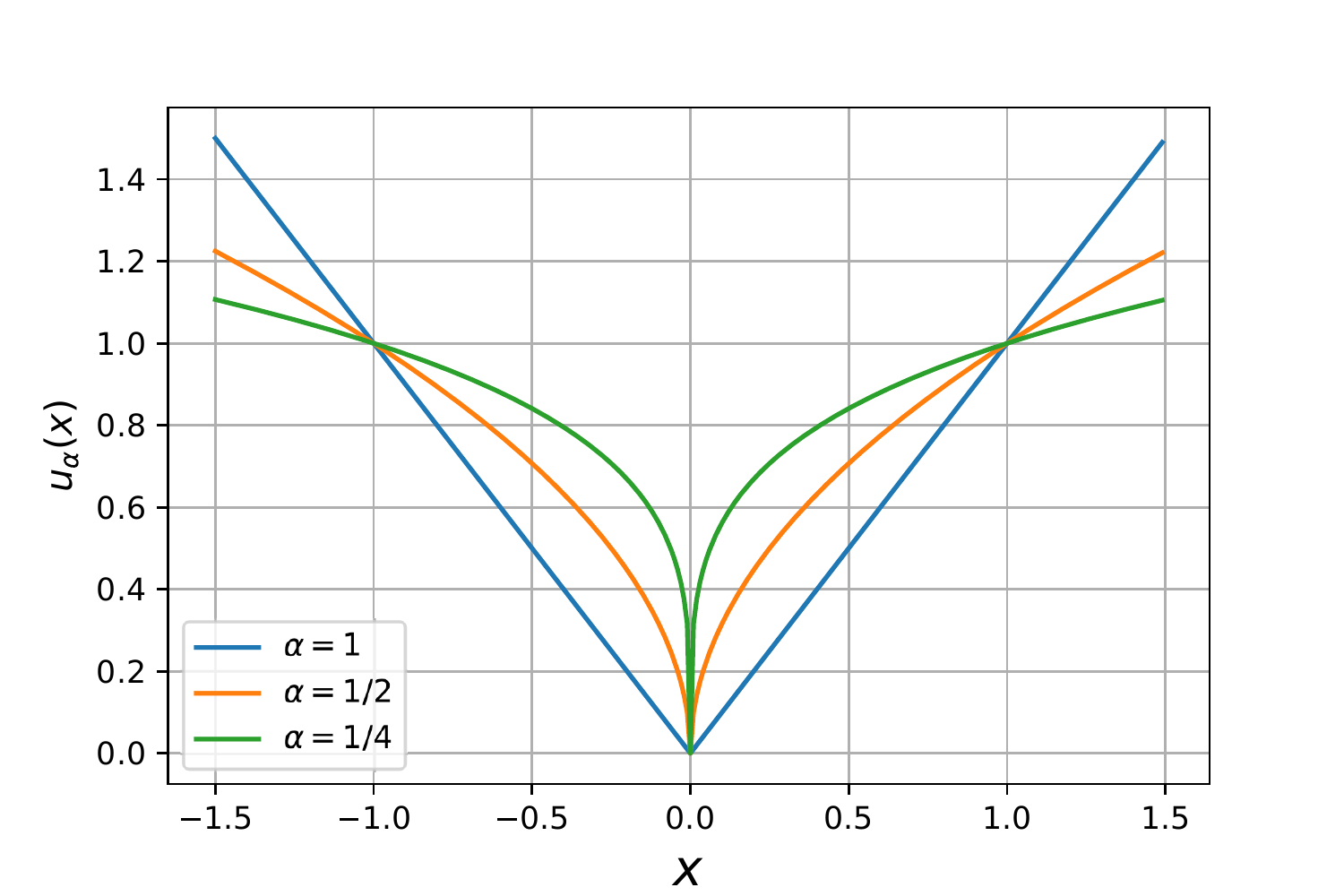}
	\caption{Graphs of the H\"older functions $u_\alpha:\R\to\R$ given by $u_\alpha(x):=|x|^\alpha$ from \eref{exhoelder} for $\alpha\in\set{\tfrac{1}{4},\tfrac{1}{2},1}$. Their decay to $0$ as $x\to 0$ is faster for decreasing values of the H\"older exponent $\alpha$}
	\label{fighoelder}
\end{SCfigure}
\begin{ex}\label{exhoelder}
	Let $\Omega=Y=\R$ and $\alpha\in(0,1]$. The function $u_\alpha:\R\to\R$, $u_\alpha(x):=\abs{x}^\alpha$ is not differentiable in $0$ (see \fref{fighoelder}), but $\alpha$-H\"older. Indeed, given $x,\bar x\in\R$ the case $0<|x|\leq|\bar x|$ leads to the inequality $1-\tfrac{|x|^\alpha}{|\bar x|^\alpha}\leq 1-\tfrac{|x|}{|\bar x|}\leq\Bigl(1-\tfrac{|x|}{|\bar x|}\Bigr)^\alpha$ and therefore $|u_\alpha(\bar x)-u_\alpha(x)|=u_\alpha(\bar x)-u_\alpha(x)\leq\abs{\bar x-x}^\alpha$. A similar argument in case $0<|\bar x|\leq|x|$ yields the assertion with H\"older constant $[u_\alpha]_\alpha\leq 1$. 
\end{ex}

\begin{thm}[local H\"older continuity]
	Let $\Omega$ be compact and $\alpha\in(0,1]$. If $u:\Omega\to Y$ is \emph{locally $\alpha$-H\"older}, i.e.\ every $x\in\Omega$ has a neighborhood $U\subseteq\Omega$ such that $[u|_U]_\alpha<\infty$ holds, then $u$ is $\alpha$-H\"older. 
\end{thm}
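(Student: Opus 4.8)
The plan is to glue the local H\"older estimates into a single global one via compactness, by splitting the supremum that defines $[u]_\alpha$ into contributions from nearby and from distant pairs of points.

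First I would note that a locally $\alpha$-H\"older function is in particular continuous, so $u(\Omega)\subseteq Y$ is compact and hence bounded; write $M:=\diam u(\Omega)<\infty$. For each $x\in\Omega$ the defining neighborhood contains an open ball $B_{\rho_x}(x):=\set{z\in\Omega:\,d(z,x)<\rho_x}$ on which a local estimate $\norm{u(y)-u(z)}\leq L_x\,d(y,z)^\alpha$ holds for some $L_x\geq 0$ and all $y,z\in B_{\rho_x}(x)$. These balls form an open cover of the compact space $\Omega$, which therefore admits a finite subcover $B_{\rho_{x_1}}(x_1),\ldots,B_{\rho_{x_N}}(x_N)$; I set $L:=\max_{1\leq i\leq N}L_{x_i}<\infty$.

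The crux is then the Lebesgue number lemma applied to this finite subcover: there exists $\delta>0$ such that every subset of $\Omega$ of diameter less than $\delta$ is contained in one of the balls $B_{\rho_{x_i}}(x_i)$. Hence for any pair $x\neq\bar x$ in $\Omega$ with $d(x,\bar x)<\delta$ the set $\set{x,\bar x}$ lies in some $B_{\rho_{x_i}}(x_i)$, so that $\norm{u(x)-u(\bar x)}\leq L_{x_i}\,d(x,\bar x)^\alpha\leq L\,d(x,\bar x)^\alpha$; this bounds the H\"older quotient by $L$ on all such nearby pairs.

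For the remaining distant pairs, where $d(x,\bar x)\geq\delta$, I would invoke boundedness directly and estimate $\frac{\norm{u(x)-u(\bar x)}}{d(x,\bar x)^\alpha}\leq\frac{M}{\delta^\alpha}$. Passing to the supremum over all $x\neq\bar x$ then gives $[u]_\alpha\leq\max\set{L,M\delta^{-\alpha}}<\infty$, which is the claim. The only real obstacle is uniformity: the local constants $L_x$ need not be bounded over all of $\Omega$, and it is precisely the passage to a finite subcover together with the Lebesgue number --- rather than a naive application of the lemma to the original infinite cover --- that produces a single constant $L$ valid for every sufficiently close pair.
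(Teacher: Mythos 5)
Your proof is correct, but it takes a genuinely different route from the paper's. The paper argues indirectly: assuming $[u]_\alpha=\infty$, it produces sequences $(x_l)_{l\in\N}$, $(\bar x_l)_{l\in\N}$ with $\norm{u(x_l)-u(\bar x_l)}>l\,d(x_l,\bar x_l)^\alpha$, extracts convergent subsequences by compactness, observes that boundedness of the left-hand side forces $d(x_l,\bar x_l)\to 0$ so that both sequences converge to a common point $x^\ast$, and then contradicts the local H\"older estimate on a neighborhood of $x^\ast$ for large $l$. You instead give a direct covering argument: finite subcover, Lebesgue number $\delta$, the local estimates for pairs at distance less than $\delta$, and the crude bound $M\delta^{-\alpha}$ for distant pairs. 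Both are sound. Your version buys an explicit bound $[u]_\alpha\leq\max\set{L,M\delta^{-\alpha}}$ on the H\"older constant and makes transparent exactly where uniformity enters (the finite subcover plus the Lebesgue number, as you correctly emphasize), while the paper's sequential argument is shorter and avoids invoking the Lebesgue number lemma. One small point to make explicit in your write-up: the continuity of $u$, and hence the boundedness of $u(\Omega)$ that you use for the distant pairs, follows because each point has a neighborhood on which $u$ is H\"older and therefore continuous --- the same fact the paper uses implicitly when it deduces $\lim_{l\to\infty}d(x_l,\bar x_l)^\alpha=0$ from its inequality.
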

\begin{proof}
	We proceed indirectly and suppose that for each $c\geq 0$ there exist $x,\bar x\in\Omega$ yielding the inequality $\norm{u(x)-u(\bar x)}>cd(x,\bar x)^\alpha$. In particular, there are sequences $(x_l)_{l\in\N}$, $(\bar x_l)_{l\in\N}$ in $\Omega$ so that
	\begin{equation}
		\norm{u(x_l)-u(\bar x_l)}>ld(x_l,\bar x_l)^\alpha\fall l\in\N
		\label{aa234}
	\end{equation}
	holds. Since $\Omega$ is compact, w.l.o.g.\ we can assume that these sequences converge to $x^\ast,\bar x^\ast\in\Omega$, respectively. Passing to the limit $l\to\infty$ in \eqref{aa234} therefore enforces $\lim_{l\to\infty}d(x_l,\bar x_l)^\alpha=0$, i.e.\ one has $x^\ast=\bar x^\ast$. Because the function $u$ is assumed to be locally H\"older, there exists a neighborhood $U\subseteq\Omega$ of $x^\ast$ and a real $C\geq 0$ with $\norm{u(x)-u(\bar x)}\leq Cd(x,\bar x)^\alpha$ for all $x,\bar x\in U$. Thanks to the inclusion $x_l,\bar x_l\in U$ for sufficiently large $l\in\N$ this implies $\norm{u(x_l)-u(\bar x_l)}\leq Cd(x_l,\bar x_l)^\alpha$ contradicting \eqref{aa234} for $l>C$. 
\end{proof}

The relationship between differentiable and H\"older continuous functions is addressed in
\begin{ex}\label{exA2}
	(1) Differentiable functions $u:\Omega\to Y$ on open, bounded and convex sets $\Omega\subset\R^\kappa$ having a bounded derivative $Du:\Omega\to L(\R^\kappa,Y)$ are $\alpha$-H\"older for each $\alpha\in(0,1]$. This follows from the Mean Value Inequality \cite[p.~35, Thm.~4.1]{martin:76}, because
	\begin{displaymath}
	\norm{u(x)-u(\bar x)}
	\leq
	\sup_{\xi\in\Omega}\norm{Du(\xi)}\abs{x-\bar x}
	\leq
	(\diam\Omega)^{1-\alpha}\sup_{\xi\in\Omega}\norm{Du(\xi)}\abs{x-\bar x}^\alpha
	\fall x,\bar x\in\Omega
	\end{displaymath}
	and thus $[u]_\alpha\leq(\diam\Omega)^{1-\alpha}\sup_{\xi\in\Omega}\norm{Du(\xi)}$. A version of this result on not necessarily convex sets $\Omega$ can be found in \cite[p.~11, Prop.~1.1.13]{fiorenza:16}. In the Lipschitz case $\alpha=1$ boundedness of $\Omega$ is not required. 

	(2) Rademacher's theorem \cite[p.~414, Thm.~21.2]{dibenedetto:16} guarantees that Lipschitz functions $u:\Omega\to\R^d$ on open sets $\Omega\subseteq\R^\kappa$ are differentiable Lebesgue-almost everywhere in $\Omega$. This situation changes for exponents $\alpha\in(0,1)$ and \cite{hardy:16} shows that the \emph{Weierstra{\ss} function} $u:\R\to\R$ given as Fourier series
	\begin{displaymath}
		u(x)=\sum_{k=0}^\infty a^k\cos(b^k\pi x)
		\quad\text{with $a\in(0,1)$ and integers }b>1
	\end{displaymath}
	satisfying $ab>1+\tfrac{3\pi}{2}$ is $\alpha$-H\"older with exponent $\alpha=-\log_ba$, but nowhere differentiable. 
\end{ex}

\begin{thm}\label{thm1}
	H\"older continuous functions are uniformly continuous. 
\end{thm}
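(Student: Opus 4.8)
The plan is to exploit the fact that the H\"older condition furnishes a modulus of continuity that is uniform across the whole space. First I would fix an $\alpha$-H\"older function $u:\Omega\to Y$ with exponent $\alpha\in(0,1]$ and record the pointwise estimate $\norm{u(x)-u(\bar x)}\leq[u]_\alpha d(x,\bar x)^\alpha$ for all $x,\bar x\in\Omega$, which is immediate from the definition of $[u]_\alpha$ as a supremum (and holds trivially when $x=\bar x$). The crucial observation is that the right-hand side depends on $x$ and $\bar x$ only through the distance $d(x,\bar x)$, so a single threshold can be chosen to work everywhere simultaneously.

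Next, given $\eps>0$, I would distinguish two cases. If $[u]_\alpha=0$, then $u$ is constant and uniform continuity is trivial. Otherwise $[u]_\alpha>0$, and I would set $\delta:=\bigl(\eps/[u]_\alpha\bigr)^{1/\alpha}>0$. For any $x,\bar x\in\Omega$ with $d(x,\bar x)<\delta$, the above estimate then yields $\norm{u(x)-u(\bar x)}\leq[u]_\alpha d(x,\bar x)^\alpha<[u]_\alpha\delta^\alpha=\eps$. Since $\delta$ was chosen independently of the points $x,\bar x$, this is precisely the definition of uniform continuity.

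There is essentially no hard part here: the entire content is that the H\"older inequality provides the explicit modulus of continuity $\omega(t)=[u]_\alpha t^\alpha$, which tends to $0$ as $t\searrow 0$ and does not vary from point to point. The only place requiring a little care is the degenerate case $[u]_\alpha=0$, which must be separated out so as to avoid dividing by zero when solving for $\delta$; dealing with it is immediate, since a vanishing H\"older constant forces $u$ to be constant (as already noted in the remark following the definition of $[u]_\alpha$).
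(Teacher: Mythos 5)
Your proposal is correct and follows essentially the same route as the paper: both fix $\eps>0$, set $\delta:=\bigl(\eps/[u]_\alpha\bigr)^{1/\alpha}$, and read off uniform continuity from the H\"older estimate, with the constant case ($[u]_\alpha=0$) excluded separately (the paper dispatches it with a ``w.l.o.g.'' remark). No gaps.
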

\begin{proof}
	Let $\eps>0$ and $u\in C^\alpha(\Omega,Y)$ (w.l.o.g.\ $u$ is not constant). Setting $\delta:=\bigl(\tfrac{\eps}{[u]_\alpha}\bigr)^{1/\alpha}$ guarantees 
	\begin{displaymath}
		d(x,\bar x)<\delta
		\quad\Rightarrow\quad
		\norm{u(x)-u(\bar x)}\leq[u]_\alpha d(x,\bar x)^\alpha\leq\eps\fall x,\bar x\in\Omega
	\end{displaymath}
	and thus $u$ is uniformly continuous. 
\end{proof}
The converse to \tref{thm1} does not hold.
\begin{SCfigure}[2]
	\includegraphics[scale=0.5]{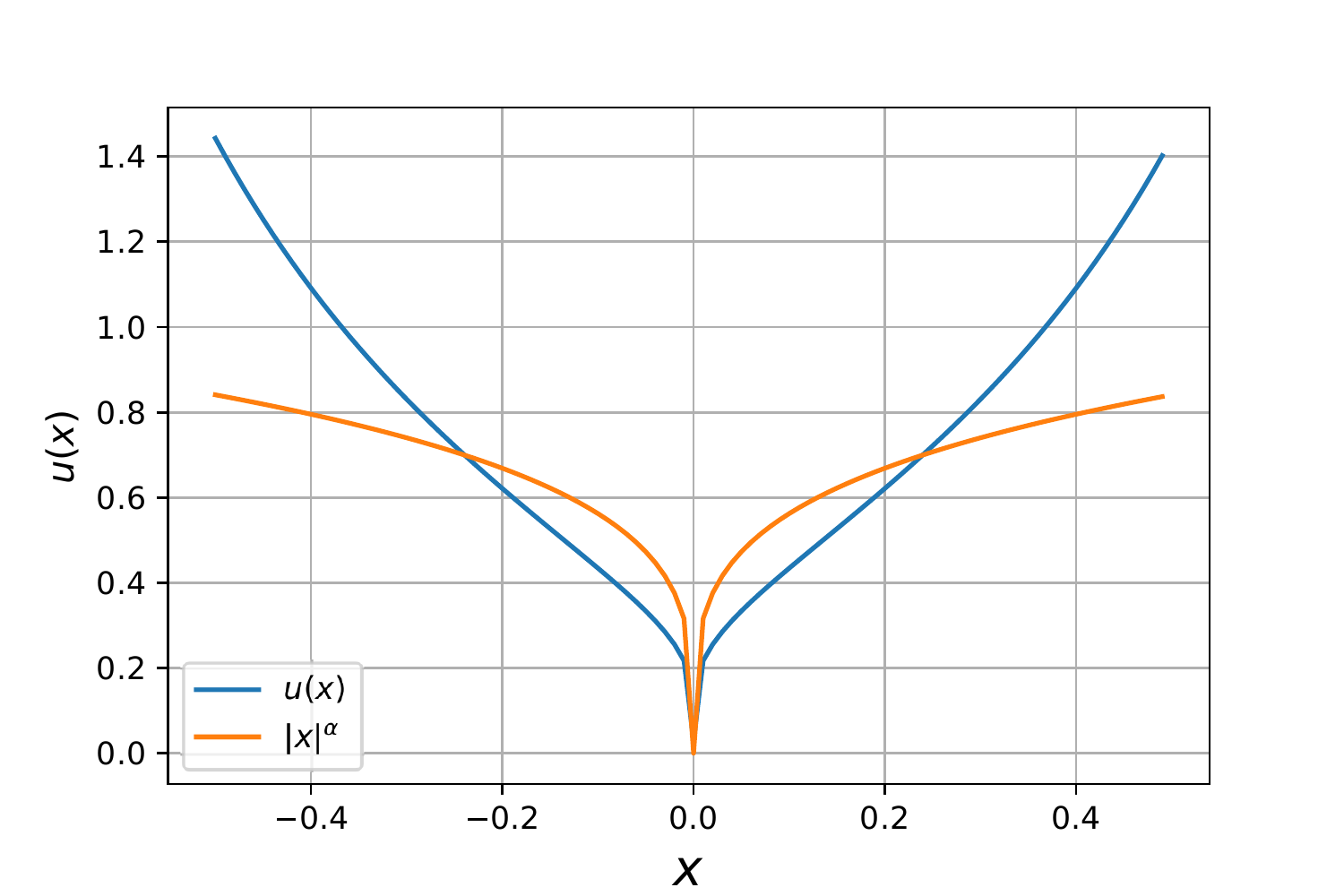}
	\caption{Graphs of the function $u:\R\to\R$ (blue) from \eref{exnothoelder}, which decays to $0$ as $x\to 0$ faster than any H\"older function (orange)}
	\label{fignothoelder}
\end{SCfigure}
\begin{ex}\label{exnothoelder}
	Let $\Omega=[-\tfrac{1}{2},\tfrac{1}{2}]$ and $Y=\R$. The continuous function (see \fref{fignothoelder})
	\begin{align*}
		u:[-\tfrac{1}{2},\tfrac{1}{2}]&\to\R,&
		u(x)&:=
		\begin{cases}
			-\tfrac{1}{\ln|x|},&x\in[-\tfrac{1}{2},\tfrac{1}{2}]\setminus\set{0},\\
			0,&x=0
		\end{cases}
	\end{align*}
	on the compact interval $[-\tfrac{1}{2},\tfrac{1}{2}]$ is uniformly continuous. However, it is not H\"older continuous, because otherwise there would exist reals $\alpha\in(0,1]$, $C\geq 0$ such that $-\tfrac{1}{\ln x}=\abs{u(x)-u(0)}\leq C\abs{x-0}^\alpha=Cx^\alpha$ for all $x\in(0,\tfrac{1}{2}]$ leading to the contradiction
	$
		C\geq-\frac{1}{x^\alpha\ln x}
		\xrightarrow[x\searrow 0]{}\infty
	$
	by the l'Hospital rule. 
\end{ex}

\begin{lem}\label{lemA1}
	Let $\alpha\in(0,1]$ and $Y$ be a Banach space. Every $\alpha$-H\"older function $u:U\to Y$ on an open set $U\subseteq\Omega$ allows an $\alpha$-H\"older extension to the closure $\overline{U}$ with the same H\"older constant. 
\end{lem}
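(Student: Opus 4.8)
The plan is to extend $u$ by continuity, exploiting that $U$ is dense in $\overline{U}$ and that Hölder continuity forces uniform continuity, so that the completeness of $Y$ supplies the missing boundary values. First I would record that, by \tref{thm1}, the $\alpha$-Hölder function $u$ is uniformly continuous on $U$; more concretely, the sole tool needed throughout is the estimate $\norm{u(x)-u(\bar x)}\leq[u]_\alpha d(x,\bar x)^\alpha$ valid for all $x,\bar x\in U$.

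Next, fix $\bar x\in\overline{U}$. Since $\overline{U}$ is the closure in the metric space $(\Omega,d)$, there is a sequence $(x_l)_{l\in\N}$ in $U$ with $\lim_{l\to\infty}x_l=\bar x$. Such a sequence is Cauchy, and the Hölder estimate $\norm{u(x_l)-u(x_m)}\leq[u]_\alpha d(x_l,x_m)^\alpha$ shows that $(u(x_l))_{l\in\N}$ is Cauchy in $Y$; as $Y$ is a Banach space, it converges. I would then define $\bar u(\bar x):=\lim_{l\to\infty}u(x_l)$ and verify that this value is independent of the chosen sequence: if $(x_l')_{l\in\N}$ is a second sequence in $U$ converging to $\bar x$, then $\norm{u(x_l)-u(x_l')}\leq[u]_\alpha d(x_l,x_l')^\alpha\to 0$, so both limits coincide. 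For $\bar x\in U$ one may use the constant sequence $x_l\equiv\bar x$, whence $\bar u|_U=u$ and $\bar u$ is indeed an extension.

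It remains to transport the Hölder bound to the closure. Given $\bar x,\bar y\in\overline{U}$, choose sequences $x_l\to\bar x$ and $y_l\to\bar y$ in $U$. Applying the Hölder estimate to each pair and letting $l\to\infty$, the continuity of the norm on $Y$ and of the metric $d$ yields
\[
	\norm{\bar u(\bar x)-\bar u(\bar y)}
	=\lim_{l\to\infty}\norm{u(x_l)-u(y_l)}
	\leq[u]_\alpha\lim_{l\to\infty}d(x_l,y_l)^\alpha
	=[u]_\alpha d(\bar x,\bar y)^\alpha,
\]
so $[\bar u]_\alpha\leq[u]_\alpha$. Since $\bar u$ extends $u$, the reverse inequality $[\bar u]_\alpha\geq[u]_\alpha$ is immediate from the definition of the Hölder constant as a supremum over a larger domain, and hence $[\bar u]_\alpha=[u]_\alpha$, as claimed.

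I expect no serious obstacle, since the construction is the familiar one of extending a uniformly continuous map to the closure of its domain, and the only substantive ingredient is the completeness of $Y$, which guarantees that the Cauchy sequences $(u(x_l))_{l\in\N}$ actually converge. The one point demanding a little care is the well-definedness of $\bar u(\bar x)$, namely its independence of the approximating sequence; this is precisely where the Hölder estimate (equivalently, uniform continuity) is invoked a second time.
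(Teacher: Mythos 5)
Your proof is correct and follows essentially the same route as the paper: extend $u$ to $\overline{U}$ using uniform continuity and the completeness of $Y$ (the paper cites this as a standard fact, while you spell out the Cauchy-sequence construction and its well-definedness), then pass to the limit in the H\"older estimate to get $[\bar u]_\alpha\leq[u]_\alpha$ and observe the reverse inequality is immediate since $\bar u$ restricts to $u$.
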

\begin{proof}
	Let $u:U\subseteq\Omega\to Y$ be $\alpha$-H\"older. By \tref{thm1} it follows that $u:U\to Y$ is uniformly continuous. Hence, because $Y$ is complete, there exists a continuous extension $\bar u:\bar U\subseteq\Omega\to Y$ of $u$ to the boundary. We next show that $\bar u$ is $\alpha$-H\"older with $[\bar u]_\alpha=[u]_\alpha$. Thereto, given arbitrary $y,\bar y\in\bar U$ and the estimate $\norm{u(x)-u(\bar x)}\leq[u]_\alpha d(x,\bar x)^\alpha$ for all $x,\bar x\in U$, we first pass to the limit $x\to y$, then to $\bar x\to\bar y$, and it results from the extension property that $\norm{\bar u(y)-\bar u(\bar y)}\leq[u]_\alpha d(y,\bar y)^\alpha$ for all $y,\bar y\in\bar U$. This guarantees that $\bar u$ is $\alpha$-H\"older with $[\bar u]_\alpha\leq[u]_\alpha$. Moreover, it evidently holds
	\begin{displaymath}
		\norm{u(x)-u(\bar x)}
		=
		\norm{\bar u(x)-\bar u(\bar x)}
		\leq
		[\bar u]_\alpha d(x,\bar x)^\alpha\fall x,\bar x\in U
	\end{displaymath}
	and $[u]_\alpha\leq[\bar u]_\alpha$. Thus, the minimum H\"older coefficients of $u$ and $\bar u$ are equal yielding the claim. 
\end{proof}

\begin{thm}\label{thmA2}
	H\"older continuous functions are bounded (on bounded sets). 
\end{thm}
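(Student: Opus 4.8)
The plan is to read ``bounded'' in the sense fixed in the Notation section, i.e.\ to show that an $\alpha$-H\"older function maps bounded sets into bounded sets; throughout one takes $\alpha\in(0,1]$, since the standing convention is that ``H\"older continuous'' refers to a positive exponent. The entire argument rests on a single application of the defining H\"older estimate together with the finiteness of the diameter of a bounded set, so I expect no serious difficulty.

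First I would fix an arbitrary nonempty bounded subset $\Omega'\subseteq\Omega$, so that $\diam\Omega'<\infty$ by definition, and let $u\in C^\alpha(\Omega,Y)$ be given. For any $x,\bar x\in\Omega'$ the H\"older condition yields
\begin{displaymath}
	\norm{u(x)-u(\bar x)}\leq[u]_\alpha d(x,\bar x)^\alpha\leq[u]_\alpha(\diam\Omega')^\alpha,
\end{displaymath}
where the second inequality uses $d(x,\bar x)\leq\diam\Omega'$ together with the monotonicity of $t\mapsto t^\alpha$ on $\R_+$. Passing to the supremum over $x,\bar x\in\Omega'$ then gives $\diam u(\Omega')\leq[u]_\alpha(\diam\Omega')^\alpha<\infty$, so the image $u(\Omega')$ is bounded. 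As $\Omega'$ was an arbitrary bounded set, this already establishes that $u$ is bounded in the required sense.

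For the global variant I would observe that if $\Omega$ itself is bounded (in particular whenever $\Omega$ is compact, which is the standing hypothesis for the integral operators studied in the body of the paper), the same estimate applied with $\Omega'=\Omega$ shows $u$ to be globally bounded; fixing a base point $x_0\in\Omega$ and inserting it via the triangle inequality then produces the explicit bound $\norm{u(x)}\leq\norm{u(x_0)}+[u]_\alpha(\diam\Omega)^\alpha$ for all $x\in\Omega$, which in turn makes the norm $\norm{\cdot}_\alpha$ finite. The only point demanding a moment's care is the exclusion of the exponent $\alpha=0$: a merely continuous function need not be bounded on a bounded set (for instance $x\mapsto x^{-1}$ on $(0,1]$), so the theorem must be understood for $\alpha\in(0,1]$, and this is precisely why the preceding estimate, which is vacuous for $\alpha=0$, does the job.
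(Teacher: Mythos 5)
Your argument is correct and takes essentially the same route as the paper: both rest on the single estimate $\norm{u(x)-u(\bar x)}\leq[u]_\alpha d(x,\bar x)^\alpha\leq[u]_\alpha(\diam B)^\alpha$, the paper inserting a fixed base point $x_0\in B$ via the triangle inequality to bound $\norm{u(x)}$ itself, while you first bound $\diam u(\Omega')$ directly and then record the identical base-point estimate as the global variant. Your closing remark excluding $\alpha=0$ is a sensible clarification consistent with the paper's terminology.
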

\begin{proof}
	Let $u\in C^\alpha(\Omega,Y)$, $B\subseteq\Omega$ be bounded and choose a fixed $x_0\in B$. Then
	\begin{equation}
		\norm{u(x)}
		\leq
		\norm{u(x_0)}+\norm{u(x)-u(x_0)}
		\leq
		\norm{u(x_0)}+[u]_\alpha d(x,x_0)^\alpha
		\leq
		\norm{u(x_0)}+[u]_\alpha(\diam B)^\alpha
		\label{appstar}
	\end{equation}
	for all $x\in\Omega$ holds and therefore the image $u(B)\subseteq Y$ is bounded. 
\end{proof}

For globally bounded $\alpha$-H\"older functions $u:\Omega\to Y$, we define
\begin{displaymath}
	\norm{u}_\alpha:=
	\begin{cases}
		\sup_{x\in\Omega}\norm{u(x)},&\alpha=0,\\
		\max\set{\sup_{x\in\Omega}\norm{u(x)},[u]_\alpha},&\alpha\in(0,1].
	\end{cases}
\end{displaymath}
On the product $Y_1\tm Y_2$ of two normed spaces $Y_1,Y_2$ we use the product norm
\begin{displaymath}
	\norm{(y_1,y_2)}=\max\set{\norm{y_1},\norm{y_2}}\fall y_1\in Y_1,\,y_2\in Y_2. 
\end{displaymath}
\begin{thm}
	A function $u=(u_1,u_2):\Omega\to Y_1\tm Y_2$ is $\alpha$-H\"older, if and only if both component functions $u_j:\Omega\to Y_j$ are $\alpha$-H\"older for $j=1,2$. In this case and for $\alpha\in(0,1]$ one has
	$
		[u_j]_\alpha\leq[u]_\alpha\leq\max\set{[u_1]_\alpha,[u_2]_\alpha}
	$
	and for globally bounded functions $u$ results
	$
		\|u_j\|_\alpha\leq\norm{u}_\alpha\leq\max\set{\norm{u_1}_\alpha,\norm{u_2}_\alpha}
	$
	for all $j=1,2$. 
\end{thm}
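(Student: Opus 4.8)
The plan is to reduce every assertion to a single pointwise identity produced by the choice of the maximum product norm on $Y_1\tm Y_2$. First I would note that, since $u(x)-u(\bar x)=(u_1(x)-u_1(\bar x),u_2(x)-u_2(\bar x))$, the definition of the product norm gives, for all $x,\bar x\in\Omega$,
\[
	\norm{u(x)-u(\bar x)}
	=
	\max\set{\norm{u_1(x)-u_1(\bar x)},\norm{u_2(x)-u_2(\bar x)}}.
\]
This identity is the engine of the whole argument: each claim follows from it by dividing through by a suitable quantity and passing to suprema.

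For the equivalence with exponent $\alpha\in(0,1]$ I would divide the identity by $d(x,\bar x)^\alpha$ over pairs $x\neq\bar x$ and take the supremum. Writing $\phi_j(x,\bar x):=\norm{u_j(x)-u_j(\bar x)}/d(x,\bar x)^\alpha$, the left-hand side becomes $[u]_\alpha$ and the right-hand side is $\sup_{x\neq\bar x}\max\set{\phi_1,\phi_2}$. The elementary fact that the supremum of a pointwise maximum equals the maximum of the suprema (valid in $[0,\infty]$, i.e.\ even when some values are $+\infty$) then yields $[u]_\alpha=\max\set{[u_1]_\alpha,[u_2]_\alpha}$, which is sharper than the stated upper bound; the lower bounds $[u_j]_\alpha\leq[u]_\alpha$ are the trivial half, read off from $\phi_j\leq\max\set{\phi_1,\phi_2}$. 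In particular $[u]_\alpha<\infty$ holds if and only if both $[u_j]_\alpha<\infty$, which is exactly the asserted equivalence for $\alpha\in(0,1]$. For the degenerate exponent $\alpha=0$ I would instead read the same identity through the $\eps$-$\delta$ definition of continuity: $\norm{u(x)-u(x_0)}\to 0$ as $x\to x_0$ precisely when both $\norm{u_j(x)-u_j(x_0)}\to 0$, recovering the equivalence (the standard statement that a map into a product is continuous iff its components are).

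It then remains to transfer the seminorm relations to the full H\"older norm in the globally bounded case. Applying the same supremum-of-a-maximum principle to the one-point identity $\norm{u(x)}=\max\set{\norm{u_1(x)},\norm{u_2(x)}}$ (directly from the product norm) gives $\sup_{x\in\Omega}\norm{u(x)}=\max\set{\sup_{x\in\Omega}\norm{u_1(x)},\sup_{x\in\Omega}\norm{u_2(x)}}$. Combining this with $[u]_\alpha=\max\set{[u_1]_\alpha,[u_2]_\alpha}$ and the definition $\norm{u}_\alpha=\max\set{\sup_{x\in\Omega}\norm{u(x)},[u]_\alpha}$, I would regroup the resulting nested maxima to obtain $\norm{u}_\alpha=\max\set{\norm{u_1}_\alpha,\norm{u_2}_\alpha}$; the companion estimate $\norm{u_j}_\alpha\leq\norm{u}_\alpha$ follows by combining $\sup_x\norm{u_j(x)}\leq\sup_x\norm{u(x)}$ with $[u_j]_\alpha\leq[u]_\alpha$.

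There is no genuine obstacle here --- the statement is a bookkeeping exercise once the pointwise identity is in place. The only points deserving a little care are the explicit verification of the elementary lemma $\sup\max\set{f,g}=\max\set{\sup f,\sup g}$ (so that the supremum over pairs may be freely interchanged with the maximum over the two indices, even in the presence of infinite values) and the purely combinatorial rearrangement of the four nested maxima in the final norm identity. Both are routine, and I would keep the write-up correspondingly short.
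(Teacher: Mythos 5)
Your proposal is correct and follows essentially the same route as the paper: both arguments rest on the pointwise identity $\norm{u(x)-u(\bar x)}=\max_{i=1}^2\norm{u_i(x)-u_i(\bar x)}$ furnished by the product norm, followed by passage to suprema. Your systematic use of $\sup\max=\max\sup$ even sharpens the stated two-sided bounds to the equalities $[u]_\alpha=\max\set{[u_1]_\alpha,[u_2]_\alpha}$ and $\norm{u}_\alpha=\max\set{\norm{u_1}_\alpha,\norm{u_2}_\alpha}$, and you treat the case $\alpha=0$ explicitly, which the paper omits.
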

\begin{proof}
	We restrict to the case $\alpha\in(0,1]$.\\ 
	$(\Rightarrow)$ If $u:\Omega\to Y_1\tm Y_2$ is $\alpha$-H\"older, then also the components $u_1,u_2$ are $\alpha$-H\"older due to
	$$
		\norm{u_j(x)-u_j(\bar x)}
		\leq
		\max_{i=1}^2\norm{u_i(x)-u_i(\bar x)}
		=
		\norm{u(x)-u(\bar x)}
		\leq
		[u]_\alpha d(x,\bar x)^\alpha\fall x,\bar x\in\Omega,\,j=1,2.
	$$
	$(\Leftarrow)$ Conversely, if the component functions $u_1,u_2$ are $\alpha$-H\"older, then also $u$ is $\alpha$-H\"older because of
	$$
		\norm{u(x)-u(\bar x)}
		=
		\max_{i=1}^2\norm{u_i(x)-u_i(\bar x)}
		\leq
		\max_{i=1}^2[u_i]_\alpha d(x,\bar x)^\alpha\fall x,\bar x\in\Omega.
	$$
	These inequalities also imply the claimed estimates for the H\"older constants. Combining them with
	$$
		\norm{u_j(x)}\leq\norm{u(x)}=\max_{i=1}^2\norm{u_i(x)}\leq\max_{i=1}^2\norm{u_i}_\alpha
		\fall x\in\Omega,\,j=1,2
	$$
	yields the estimates for $\norm{\cdot}_\alpha$ after passing to the supremum over $x\in\Omega$. 
\end{proof}

The product of two metric spaces $\Omega_1,\Omega_2$ is equipped with the product metric
$$
	d((x_1,x_2),(\bar x_1,\bar x_2))
	:=
	\max\set{d(x_1,\bar x_1),d(x_2,\bar x_2)}\fall x_1,\bar x_1\in\Omega_1,\,x_2,\bar x_2\in\Omega_2.
$$
\begin{thm}
	Let $\alpha\in(0,1]$. A function $u:\Omega_1\tm\Omega_2\to Y$ defined on the product of metric spaces $\Omega_1,\Omega_2$ is $\alpha$-H\"older, if and only if all functions $u(\cdot,x_2):\Omega_1\to Y$ and $u(x_1,\cdot):\Omega_2\to Y$ are $\alpha$-H\"older uniformly in $x_2\in\Omega_2$ resp.\ $x_1\in\Omega_1$. 
\end{thm}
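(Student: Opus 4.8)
The plan is to prove both implications directly from the definition of the H\"older constant, the essential structural input being the product metric $d((x_1,x_2),(\bar x_1,\bar x_2))=\max\set{d(x_1,\bar x_1),d(x_2,\bar x_2)}$, which in particular gives the two inequalities $d(x_1,\bar x_1)\leq d((x_1,x_2),(\bar x_1,\bar x_2))$ and $d(x_2,\bar x_2)\leq d((x_1,x_2),(\bar x_1,\bar x_2))$.

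For the necessity ($\Rightarrow$), suppose $u$ is $\alpha$-H\"older. Fixing $x_2\in\Omega_2$ and observing that $d((x_1,x_2),(\bar x_1,x_2))=\max\set{d(x_1,\bar x_1),0}=d(x_1,\bar x_1)$, I read off from the defining inequality that $\norm{u(x_1,x_2)-u(\bar x_1,x_2)}\leq[u]_\alpha d(x_1,\bar x_1)^\alpha$ for all $x_1,\bar x_1\in\Omega_1$. Hence $[u(\cdot,x_2)]_\alpha\leq[u]_\alpha$ with a bound independent of $x_2$, which is precisely $\alpha$-H\"older continuity of $u(\cdot,x_2)$ uniformly in $x_2$. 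The symmetric argument, freezing the first argument instead, yields the uniform $\alpha$-H\"older continuity of $u(x_1,\cdot)$.

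For the sufficiency ($\Leftarrow$), the uniformity hypothesis is exactly what guarantees that the constants
\[
	L_1:=\sup_{x_2\in\Omega_2}[u(\cdot,x_2)]_\alpha,\qquad
	L_2:=\sup_{x_1\in\Omega_1}[u(x_1,\cdot)]_\alpha
\]
are finite. Given $(x_1,x_2),(\bar x_1,\bar x_2)\in\Omega_1\tm\Omega_2$ I would insert the intermediate point $(\bar x_1,x_2)$ and combine the triangle inequality with the two uniform estimates to obtain
\[
	\norm{u(x_1,x_2)-u(\bar x_1,\bar x_2)}
	\leq
	\norm{u(x_1,x_2)-u(\bar x_1,x_2)}+\norm{u(\bar x_1,x_2)-u(\bar x_1,\bar x_2)}
	\leq
	L_1 d(x_1,\bar x_1)^\alpha+L_2 d(x_2,\bar x_2)^\alpha.
\]
Bounding each factor $d(x_i,\bar x_i)^\alpha$ by $d((x_1,x_2),(\bar x_1,\bar x_2))^\alpha$ --- which is legitimate precisely because the product metric is the maximum, so that no subadditivity of $t\mapsto t^\alpha$ is required --- gives $\norm{u(x_1,x_2)-u(\bar x_1,\bar x_2)}\leq(L_1+L_2)d((x_1,x_2),(\bar x_1,\bar x_2))^\alpha$ and hence $[u]_\alpha\leq L_1+L_2<\infty$.

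The whole computation is routine and I do not anticipate a serious obstacle. The only point deserving attention is the role of the word \emph{uniformly}: sufficiency genuinely requires that the sectional H\"older constants admit a common finite bound, since otherwise $L_1$ or $L_2$ could be infinite and the triangle-inequality estimate would break down. I would close by remarking that the choice of intermediate point is immaterial and that the same argument extends verbatim to finite products $\Omega_1\tm\cdots\tm\Omega_N$.
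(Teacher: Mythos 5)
Your proof is correct and follows essentially the same route as the paper: necessity by freezing one coordinate in the defining inequality, sufficiency by inserting the intermediate point $(\bar x_1,x_2)$, applying the triangle inequality, and bounding each sectional distance by the max-product metric to arrive at $[u]_\alpha\leq L_1+L_2$. No substantive differences to report.
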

\begin{proof}
	Suppose that $x_1,\bar x_1\in\Omega_1$ and $x_2,\bar x_2\in\Omega_2$ are arbitrary.\\
	$(\Rightarrow)$ Let $u:\Omega_1\tm\Omega_2\to Y$ be $\alpha$-H\"older. From the estimates
	\begin{align*}
		\norm{u(x_1,x_2)-u(\bar x_1,x_2)}
		&\leq
		[u]_\alpha d(x_1,\bar x_1)^\alpha,&
		\norm{u(x_1,x_2)-u(x_1,\bar x_2)}
		&\leq
		[u]_\alpha d(x_2,\bar x_2)^\alpha
	\end{align*}
	one deduces that $u(\cdot,x_2)$ and $u(x_1,\cdot)$ are $\alpha$-H\"older (uniformly in $x_2$ resp.\ $x_1$).\\
	$(\Leftarrow)$ Conversely, the estimate
	\begin{align*}
		\norm{u(x_1,x_2)-u(\bar x_1,\bar x_2)}
		&\leq
		\norm{u(x_1,x_2)-u(\bar x_1,x_2)}+\norm{u(\bar x_1,x_2)-u(\bar x_1,\bar x_2)}\\
		&\leq
		\sup_{x\in\Omega_2}[u(\cdot,x)]_\alpha d(x_1,\bar x_1)^\alpha+
		\sup_{x\in\Omega_1}[u(x,\cdot)]_\alpha d(x_2,\bar x_2)^\alpha\\
		&\leq
		\Bigl(\sup_{x\in\Omega_2}[u(\cdot,x)]_\alpha+\sup_{x\in\Omega}[u(x,\cdot)]_\alpha\Bigr)
		\max\set{d(x_1,\bar x_1),d(x_2,\bar x_2)}^\alpha
	\end{align*}
	implies a H\"older condition for $u$. 
\end{proof}

\begin{thm}\label{thmA5}
	Let $\alpha\in(0,1]$. If a function $u:\Omega_1\tm\Omega_2\to Y$ satisfies
	\begin{itemize}
		\item[(i)] $\sup_{x_1\in\Omega_1}[u(x_1,\cdot)]_\alpha<\infty$, 

		\item[(ii)] $u(\cdot,x_2):\Omega_1\to Y$ is continuous for all $x_2\in\Omega_2$,
	\end{itemize}
	then $u$ is continuous. 
\end{thm}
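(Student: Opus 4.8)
The plan is to fix an arbitrary point $(x_1^\ast,x_2^\ast)\in\Omega_1\tm\Omega_2$ and verify continuity of $u$ there by a one-variable-at-a-time decomposition. Writing $L:=\sup_{x_1\in\Omega_1}[u(x_1,\cdot)]_\alpha$, which is finite by (i), I would split the increment via the triangle inequality as
$$
	\norm{u(x_1,x_2)-u(x_1^\ast,x_2^\ast)}
	\leq
	\norm{u(x_1,x_2)-u(x_1,x_2^\ast)}
	+
	\norm{u(x_1,x_2^\ast)-u(x_1^\ast,x_2^\ast)},
$$
so that the variation in the second argument and in the first argument are treated separately.

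For the first term, assumption (i) gives the bound $\norm{u(x_1,x_2)-u(x_1,x_2^\ast)}\leq L\,d(x_2,x_2^\ast)^\alpha$ for every $x_1\in\Omega_1$; the crucial feature is that this estimate holds \emph{uniformly} in $x_1$, which is exactly what the supremum in (i) provides. For the second term, assumption (ii) states that $u(\cdot,x_2^\ast):\Omega_1\to Y$ is continuous at $x_1^\ast$, whence $\norm{u(x_1,x_2^\ast)-u(x_1^\ast,x_2^\ast)}\to 0$ as $x_1\to x_1^\ast$.

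Given $\eps>0$, I would therefore choose $\delta_1>0$ from the continuity in (ii) such that $d(x_1,x_1^\ast)<\delta_1$ makes the second term smaller than $\tfrac{\eps}{2}$, and set $\delta_2:=\bigl(\tfrac{\eps}{2L}\bigr)^{1/\alpha}$ (with the first term identically $0$ and $\delta_2$ arbitrary in the degenerate case $L=0$) so that $d(x_2,x_2^\ast)<\delta_2$ makes the first term smaller than $\tfrac{\eps}{2}$. With the product metric $d((x_1,x_2),(x_1^\ast,x_2^\ast))=\max\set{d(x_1,x_1^\ast),d(x_2,x_2^\ast)}$, any point within distance $\min\set{\delta_1,\delta_2}$ of $(x_1^\ast,x_2^\ast)$ controls both coordinate distances simultaneously, and the two bounds combine to give $\norm{u(x_1,x_2)-u(x_1^\ast,x_2^\ast)}<\eps$. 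Since $(x_1^\ast,x_2^\ast)$ was arbitrary, $u$ is continuous.

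There is no genuine obstacle here: the argument is a routine fixation-and-triangle-inequality scheme, structurally parallel to the $(\Leftarrow)$ direction of the preceding theorem. The only point deserving attention is the uniformity in (i) --- it is essential that a single H\"older constant $L$ dominates $[u(x_1,\cdot)]_\alpha$ for all $x_1$, so that the second-argument estimate does not deteriorate as $x_1$ ranges near $x_1^\ast$; pointwise H\"older continuity of each individual slice $u(x_1,\cdot)$ would not suffice.
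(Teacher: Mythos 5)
Your proof is correct and follows essentially the same route as the paper: the identical one-variable-at-a-time triangle-inequality split, with the uniform H\"older bound from (i) controlling the second-argument increment and (ii) handling the first; the paper merely phrases the limit with sequences instead of your $\eps$-$\delta$ formulation. Your explicit remark on why the uniformity in (i) is essential is a worthwhile observation that the paper leaves implicit.
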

\begin{proof}
	Let $(x_1^\ast,x_2^\ast)\in\Omega_1\times\Omega_2$ be the limit of sequences $(x_l^1)_{l\in\N}$, $(x_l^2)_{l\in\N}$ in the respective metric spaces $\Omega_1$ and $\Omega_2$. If $k_2:=\sup_{x\in\Omega_1}[u(x,\cdot)]_\alpha$, then
	\begin{align*}
		0&\leq
		\norm{u(x_l^1,x_l^2)-u(x_1^\ast,x_2^\ast)}
		\leq
		\norm{u(x_l^1,x_l^2)-u(x_l^1,x_2^\ast)}+\norm{u(x_l^1,x_2^\ast)-u(x_1^\ast,x_2^\ast)}\\
		&\stackrel{(i)}{\leq}
		k_2d(x_l^2,x_2^\ast)^\alpha+\norm{u(x_l^1,x_2^\ast)-u(x_1^\ast,x_2^\ast)}
		\xrightarrow[l\to\infty]{(ii)}0
	\end{align*}
	establishes the continuity of $u$, since $(x_1^\ast,x_2^\ast)$ were arbitrary. 
\end{proof}

Let us next investigate the algebraic structure of the space of $\alpha$-H\"older functions. 
\begin{thm}[sum rule]\label{thmsum}
	With functions $u_1,u_2:\Omega\to Y$ also $\lambda_1u_1+\lambda_2u_2$ is $\alpha$-H\"older for all $\lambda_1,\lambda_2\in\K$. In case $\alpha\in(0,1]$ one has
	$
		[\lambda_1u_1+\lambda_2u_2]_\alpha
		\leq
		\abs{\lambda_1}[u_1]_\alpha+\abs{\lambda_2}[u_2]_\alpha
	$
	and for globally bounded $u_1,u_2$ holds
	\begin{displaymath}
		\norm{\lambda_1u_1+\lambda_2u_2}_\alpha
		\leq
		\abs{\lambda_1}\norm{u_1}_\alpha+\abs{\lambda_2}\norm{u_2}_\alpha
		\fall\lambda_1,\lambda_2\in\K. 
	\end{displaymath}
\end{thm}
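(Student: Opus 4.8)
The plan is to reduce everything to the triangle inequality in the target normed space $(Y,\norm{\cdot})$ together with the sublinearity of the supremum. First I would dispose of the case $\alpha=0$: here $C^0(\Omega,Y)$ is by definition the space of continuous functions, and since a linear combination of continuous $Y$-valued functions is again continuous, $\lambda_1u_1+\lambda_2u_2$ is $0$-H\"older with no further work. The bound on $\norm{\cdot}_0$ then follows immediately from $\norm{(\lambda_1u_1+\lambda_2u_2)(x)}\leq\abs{\lambda_1}\norm{u_1(x)}+\abs{\lambda_2}\norm{u_2(x)}$ upon passing to the supremum over $x\in\Omega$.

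For $\alpha\in(0,1]$ the heart of the argument is a single estimate on the difference quotient. For arbitrary $x\neq\bar x$ in $\Omega$, linearity and the triangle inequality yield
\begin{displaymath}
	\frac{\norm{(\lambda_1u_1+\lambda_2u_2)(x)-(\lambda_1u_1+\lambda_2u_2)(\bar x)}}{d(x,\bar x)^\alpha}
	\leq
	\abs{\lambda_1}\frac{\norm{u_1(x)-u_1(\bar x)}}{d(x,\bar x)^\alpha}
	+
	\abs{\lambda_2}\frac{\norm{u_2(x)-u_2(\bar x)}}{d(x,\bar x)^\alpha}.
\end{displaymath}
Each quotient on the right is bounded by the corresponding H\"older constant, so passing to the supremum over all $x\neq\bar x$ gives simultaneously the finiteness of $[\lambda_1u_1+\lambda_2u_2]_\alpha$ (hence that the combination is $\alpha$-H\"older) and the claimed seminorm inequality $[\lambda_1u_1+\lambda_2u_2]_\alpha\leq\abs{\lambda_1}[u_1]_\alpha+\abs{\lambda_2}[u_2]_\alpha$.

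Finally, for globally bounded $u_1,u_2$ I would assemble the norm bound by treating the two constituents of $\norm{\cdot}_\alpha$, namely the global supremum of the pointwise norm and the seminorm $[\cdot]_\alpha$, separately. The supremum part is handled exactly as in the $\alpha=0$ case, and the seminorm part by the inequality just derived. Since $\sup_{x\in\Omega}\norm{u_j(x)}\leq\norm{u_j}_\alpha$ and $[u_j]_\alpha\leq\norm{u_j}_\alpha$ for $j=1,2$, both constituents are majorized by $\abs{\lambda_1}\norm{u_1}_\alpha+\abs{\lambda_2}\norm{u_2}_\alpha$, and therefore so is their maximum, which is precisely $\norm{\lambda_1u_1+\lambda_2u_2}_\alpha$. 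No genuine obstacle arises: the entire proof is a bookkeeping exercise around the triangle inequality, and the only point requiring a moment's care is this last step, where one invokes that the maximum of two expressions each bounded by a common quantity is itself bounded by that quantity.
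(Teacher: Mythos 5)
Your proof is correct and is precisely the straightforward triangle-inequality argument that the paper explicitly leaves to the reader. All three steps (the $\alpha=0$ case, the seminorm estimate via the difference quotient, and the final maximum of two majorized quantities) are handled properly, so nothing further is needed.
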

\begin{proof}
	The straightforward proof is left to the reader. 
\end{proof}

A mapping $\cdot:Y_1\tm Y_2\to Y$ is called a \emph{product}, if there exists a constant $C\geq 0$ such that
\begin{align*}
	y_1\cdot(y_2+\bar y_2)&=y_1\cdot y_2+y_1\cdot\bar y_2,&
	(y_1+\bar y_1)\cdot y_2&=y_1\cdot y_2+\bar y_1\cdot y_2,&
	\norm{y_1\cdot y_2}&\leq C\norm{y_1}\norm{y_2}
\end{align*}
for all $y_1,\bar y_1\in Y_1$, $y_2,\bar y_2\in Y_2$. 
\begin{thm}[product rule]
	If the functions $u_1:\Omega\to Y_1$ and $u_2:\Omega\to Y_2$ are globally bounded and $\alpha$-H\"older, then also their product $u_1\cdot u_2:\Omega\to Y$ is $\alpha$-H\"older. In case $\alpha\in(0,1]$ one has the estimates
	$
		[u_1\cdot u_2]_\alpha
		\leq
		C\bigl(\norm{u_2}_0[u_1]_\alpha+\norm{u_1}_0[u_2]_\alpha\bigr)
	$
	and
	$
		\norm{u_1\cdot u_2}_\alpha
		\leq
		C\bigl(\norm{u_2}_0\norm{u_1}_\alpha+\norm{u_1}_0\norm{u_2}_\alpha\bigr)
	$. 
\end{thm}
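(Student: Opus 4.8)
The plan is to reduce everything to the bilinearity of the product together with its norm bound $\norm{y_1\cdot y_2}\le C\norm{y_1}\norm{y_2}$, via the standard ``add and subtract'' decomposition. First I would dispose of the continuity case $\alpha=0$: a bilinear map satisfying $\norm{B(y_1,y_2)}\le C\norm{y_1}\norm{y_2}$ is continuous on $Y_1\tm Y_2$ (split $B(y_1,y_2)-B(y_1',y_2')=B(y_1-y_1',y_2)+B(y_1',y_2-y_2')$ and estimate), so the composition $x\mapsto u_1(x)\cdot u_2(x)$ of continuous maps is continuous; boundedness is immediate from $\norm{u_1(x)\cdot u_2(x)}\le C\norm{u_1(x)}\norm{u_2(x)}\le C\norm{u_1}_0\norm{u_2}_0$. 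This already yields $\norm{u_1\cdot u_2}_0\le C\norm{u_1}_0\norm{u_2}_0$, which I reuse below.

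For $\alpha\in(0,1]$, the key step is the identity, valid for all $x,\bar x\in\Omega$ by bilinearity,
\begin{displaymath}
	u_1(x)\cdot u_2(x)-u_1(\bar x)\cdot u_2(\bar x)
	=u_1(x)\cdot\bigl(u_2(x)-u_2(\bar x)\bigr)+\bigl(u_1(x)-u_1(\bar x)\bigr)\cdot u_2(\bar x).
\end{displaymath}
Applying the norm bound to each summand, then the global boundedness $\norm{u_1(x)}\le\norm{u_1}_0$, $\norm{u_2(\bar x)}\le\norm{u_2}_0$ and the two H\"older estimates $\norm{u_j(x)-u_j(\bar x)}\le[u_j]_\alpha d(x,\bar x)^\alpha$, I obtain
\begin{displaymath}
	\norm{u_1(x)\cdot u_2(x)-u_1(\bar x)\cdot u_2(\bar x)}
	\le C\bigl(\norm{u_1}_0[u_2]_\alpha+\norm{u_2}_0[u_1]_\alpha\bigr)d(x,\bar x)^\alpha.
\end{displaymath}
Dividing by $d(x,\bar x)^\alpha$ and passing to the supremum over $x\neq\bar x$ gives the seminorm estimate $[u_1\cdot u_2]_\alpha\le C(\norm{u_2}_0[u_1]_\alpha+\norm{u_1}_0[u_2]_\alpha)$, in particular its finiteness, so that $u_1\cdot u_2$ is $\alpha$-H\"older.

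Finally, for the full norm I would combine the two pieces, using that $\norm{u_j}_\alpha=\max\set{\norm{u_j}_0,[u_j]_\alpha}$ dominates both $\norm{u_j}_0$ and $[u_j]_\alpha$. The sup-norm bound gives $\norm{u_1\cdot u_2}_0\le C\norm{u_1}_0\norm{u_2}_0\le C(\norm{u_2}_0\norm{u_1}_\alpha+\norm{u_1}_0\norm{u_2}_\alpha)$, while the seminorm bound gives $[u_1\cdot u_2]_\alpha\le C(\norm{u_2}_0\norm{u_1}_\alpha+\norm{u_1}_0\norm{u_2}_\alpha)$; taking the maximum of the two yields $\norm{u_1\cdot u_2}_\alpha\le C(\norm{u_2}_0\norm{u_1}_\alpha+\norm{u_1}_0\norm{u_2}_\alpha)$. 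There is no genuine obstacle here; the only point requiring care is the asymmetric bookkeeping in the decomposition (one could equally freeze the other factor by adding and subtracting $u_1(\bar x)\cdot u_2(x)$) and applying the dominations $\norm{u_j}_0,[u_j]_\alpha\le\norm{u_j}_\alpha$ so that every term is controlled by the symmetric right-hand side.
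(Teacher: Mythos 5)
Your proof is correct and follows essentially the same route as the paper: the same add-and-subtract bilinear decomposition (the paper freezes $u_2$ at $x$ and $u_1$ at $\bar x$, you do the mirror-image split, which is immaterial), followed by the product norm bound and the H\"older estimates. You additionally spell out the $\alpha=0$ case and the final norm inequality, which the paper omits resp.\ declares to ``follow easily''.
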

\begin{proof}
	We restrict to $\alpha\in(0,1]$. Using properties of a product, we obtain from the triangle inequality
	\begin{align*}
		\norm{(u_1\cdot u_2)(x)-(u_1\cdot u_2)(\bar x)}
		&\leq
		\norm{(u_1(x)-u_1(\bar x))\cdot u_2(x)}+
		\norm{u_1(\bar x)\cdot(u_2(x)-u_2(\bar x))}\\
		&\leq
		C\norm{u_1(x)-u_1(\bar x)}\norm{u_2(x)}+
		C\norm{u_1(\bar x)}\norm{u_2(x)-u_2(\bar x)}\\
		&\leq
		C\bigl(\norm{u_2}_0[u_1]_\alpha+\norm{u_1}_0[u_2]_\alpha\bigr)d(x,\bar x)^\alpha
		\fall x,\bar x\in\Omega
	\end{align*}
	and this implies that $u_1\cdot u_2$ is $\alpha$-H\"older. Then the estimate for $\norm{u_1\cdot u_2}_\alpha$ follows easily. 
\end{proof}

\begin{thm}[chain rule]
	Let $\alpha_1,\alpha_2\in[0,1]$. If functions $u_1:\Omega\to Y_1$ is $\alpha_1$-H\"older and $u_2:u_1(\Omega)\to Y$ is $\alpha_2$-H\"older, then the composition $u_2\circ u_1:\Omega\to Y$ is $\alpha_1\alpha_2$-H\"older. In case $\alpha_1,\alpha_2\in(0,1]$ one has the estimate
	$
		[u_2\circ u_1]_{\alpha_1\alpha_2}
		\leq
		[u_1]_{\alpha_1}^{\alpha_2}[u_2]_\beta
	$
	and for globally bounded $u_2$ results
	$
		\norm{u_2\circ u_1}_{\alpha_1\alpha_2}
		\leq
		\max\set{1,[u_1]_{\alpha_1}^{\alpha_2}}\norm{u_2}_{\alpha_2}. 
	$
\end{thm}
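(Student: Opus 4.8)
The plan is to chain the two H\"older estimates together, exploiting that the metric on the image $u_1(\Omega)\subseteq Y_1$ is the one induced by $\norm{\cdot}$. I would first treat the main case $\alpha_1,\alpha_2\in(0,1]$ and fix arbitrary $x,\bar x\in\Omega$. Applying the $\alpha_2$-H\"older estimate for $u_2$ at the points $u_1(x),u_1(\bar x)\in u_1(\Omega)$ yields
\begin{displaymath}
	\norm{(u_2\circ u_1)(x)-(u_2\circ u_1)(\bar x)}
	\leq
	[u_2]_{\alpha_2}\norm{u_1(x)-u_1(\bar x)}^{\alpha_2}.
\end{displaymath}
Next I would insert the estimate $\norm{u_1(x)-u_1(\bar x)}\leq[u_1]_{\alpha_1}d(x,\bar x)^{\alpha_1}$ and raise it to the power $\alpha_2$; since $t\mapsto t^{\alpha_2}$ is monotonically increasing on $\R_+$ for $\alpha_2\in(0,1]$, this preserves the inequality and produces
\begin{displaymath}
	\norm{(u_2\circ u_1)(x)-(u_2\circ u_1)(\bar x)}
	\leq
	[u_2]_{\alpha_2}[u_1]_{\alpha_1}^{\alpha_2}d(x,\bar x)^{\alpha_1\alpha_2}
	\fall x,\bar x\in\Omega.
\end{displaymath}
Passing to the supremum over $x\neq\bar x$ shows that $u_2\circ u_1$ is $\alpha_1\alpha_2$-H\"older and establishes the constant estimate $[u_2\circ u_1]_{\alpha_1\alpha_2}\leq[u_1]_{\alpha_1}^{\alpha_2}[u_2]_{\alpha_2}$.

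For the norm estimate under the additional hypothesis that $u_2$ is globally bounded, I would bound the supremum part separately. Since every value $(u_2\circ u_1)(x)$ is $u_2$ evaluated at a point of $u_1(\Omega)$, one has $\sup_{x\in\Omega}\norm{(u_2\circ u_1)(x)}\leq\sup_{y\in u_1(\Omega)}\norm{u_2(y)}\leq\norm{u_2}_{\alpha_2}$. Combining this with the constant estimate just derived and with $[u_2]_{\alpha_2}\leq\norm{u_2}_{\alpha_2}$ gives
\begin{displaymath}
	\norm{u_2\circ u_1}_{\alpha_1\alpha_2}
	=
	\max\set{\sup_{x\in\Omega}\norm{(u_2\circ u_1)(x)},[u_2\circ u_1]_{\alpha_1\alpha_2}}
	\leq
	\max\set{1,[u_1]_{\alpha_1}^{\alpha_2}}\norm{u_2}_{\alpha_2},
\end{displaymath}
which is the asserted bound.

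There is no deep obstacle here; the two points requiring care are the monotonicity of $t\mapsto t^{\alpha_2}$, which licenses raising the $u_1$-estimate to the power $\alpha_2$, and the degenerate boundary cases of the exponents. When $\alpha_1=0$ or $\alpha_2=0$ the corresponding factor is only continuous rather than genuinely H\"older, the product exponent $\alpha_1\alpha_2$ vanishes, and the claim reduces to the statement that a composition of continuous functions is continuous (equivalently $0$-H\"older). This follows at once, using \tref{thm1} to see that each H\"older factor is in particular continuous, so no quantitative estimate is needed in those cases.
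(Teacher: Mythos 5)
Your proposal is correct and follows essentially the same route as the paper: chain the $\alpha_2$-H\"older estimate for $u_2$ with the $\alpha_1$-H\"older estimate for $u_1$ raised to the power $\alpha_2$, then read off the seminorm and norm bounds. You in fact supply slightly more detail than the paper (the explicit norm estimate, which the paper leaves as ``readily derived'', and the degenerate cases $\alpha_1=0$ or $\alpha_2=0$, which the paper omits), and you correctly read $[u_2]_{\alpha_2}$ where the statement has the typo $[u_2]_\beta$.
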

\begin{proof}
	We focus on the situation $\alpha_1,\alpha_2\in(0,1]$. The following holds
	\begin{align*}
		\norm{u_2\circ u_1(x)-u_2\circ u_1(\bar x)}
		&\leq
		[u_2]_{\alpha_2}\abs{u_1(x)-u_1(\bar x)}^{\alpha_2}
		\leq
		[u_1]_{\alpha_1}^{\alpha_2}[u_2]_{\alpha_2}d(x,\bar x)^{\alpha_1\alpha_2}
		\fall x,\bar x\in\Omega
	\end{align*}
	and so the composition $u_2\circ u_1$ is $\alpha$-H\"older. The remaining norm estimate is readily derived. 
\end{proof}

Let $B(\Omega,Y)$ abbreviate the space of globally bounded functions and we define the space of globally bounded $\alpha$-H\"older functions by
\begin{displaymath}
	C^\alpha(\Omega,Y):=\set{u\in B(\Omega,Y):\,u\text{ is $\alpha$-H\"older}}.
\end{displaymath}
Due to \tref{thmA2} the characterization $C^\alpha(\Omega,Y)=\set{u:\Omega\to Y\mid u\text{ is $\alpha$-H\"older}}$ holds on bounded spaces $\Omega$. By \lref{lemA1} it is $C^\alpha(U,Y)=C^\alpha(\overline{U},Y)$ for subsets $U\subseteq\Omega$ and Banach spaces $Y$. 
\begin{thm}\label{thmcomplete}
	The set $C^\alpha(\Omega,Y)$ is a normed space over $\K$ w.r.t.\ the norm $\norm{\cdot}_\alpha$. Furthermore, with $Y$ also $C^\alpha(\Omega,Y)$ is a Banach space. 
\end{thm}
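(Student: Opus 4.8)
The plan is to treat the two assertions separately. First I would verify that $(C^\alpha(\Omega,Y),\norm{\cdot}_\alpha)$ is a normed space. The vector space structure, absolute homogeneity and the triangle inequality are furnished almost verbatim by the sum rule \tref{thmsum}: taking $\lambda_1=\lambda_2=1$ there shows closure under addition and subadditivity $\norm{u_1+u_2}_\alpha\le\norm{u_1}_\alpha+\norm{u_2}_\alpha$, while for scalars one has $\sup_{x\in\Omega}\norm{\lambda u(x)}=\abs{\lambda}\sup_{x\in\Omega}\norm{u(x)}$ and $[\lambda u]_\alpha=\abs{\lambda}[u]_\alpha$, hence $\norm{\lambda u}_\alpha=\abs{\lambda}\norm{u}_\alpha$. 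The only axiom needing a separate word is definiteness, and since $\norm{u}_\alpha\ge\sup_{x\in\Omega}\norm{u(x)}$ for both $\alpha=0$ and $\alpha\in(0,1]$, the vanishing of $\norm{u}_\alpha$ forces $u\equiv 0$.

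For completeness I would start from a Cauchy sequence $(u_l)_{l\in\N}$ in $C^\alpha(\Omega,Y)$. Because $\norm{\cdot}_0\le\norm{\cdot}_\alpha$, it is Cauchy in the supremum norm, so the completeness of $Y$ yields a uniform limit $u:\Omega\to Y$, which is globally bounded and (for $\alpha=0$, as a uniform limit of continuous functions) continuous; this already settles the case $\alpha=0$. For $\alpha\in(0,1]$ the substance of the proof is to show that $u$ is $\alpha$-H\"older and that the convergence in fact takes place in $\norm{\cdot}_\alpha$. Given $\eps>0$ I would pick $N$ with $\norm{u_l-u_m}_\alpha\le\eps$, hence $[u_l-u_m]_\alpha\le\eps$, for all $l,m\ge N$, and then, for fixed distinct $x,\bar x\in\Omega$ and fixed $l\ge N$, pass to the limit $m\to\infty$ in
\[
	\frac{\norm{(u_l-u_m)(x)-(u_l-u_m)(\bar x)}}{d(x,\bar x)^\alpha}
	\le
	[u_l-u_m]_\alpha
	\le
	\eps,
\]
exploiting the pointwise convergence $u_m\to u$ and the continuity of $\norm{\cdot}$ to obtain the same bound with $u_m$ replaced by $u$. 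Taking the supremum over all pairs $x\neq\bar x$ then gives $[u_l-u]_\alpha\le\eps$ for every $l\ge N$.

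Combined with $\norm{u_l-u}_0\le\eps$ (after enlarging $N$), this produces $\norm{u_l-u}_\alpha=\max\set{\norm{u_l-u}_0,[u_l-u]_\alpha}\le\eps$ for $l\ge N$, so $u_l\to u$ in $\norm{\cdot}_\alpha$; moreover $u_N-u$ is then $\alpha$-H\"older and bounded, whence $u=u_N-(u_N-u)\in C^\alpha(\Omega,Y)$ by \tref{thmsum}. I expect the only genuinely delicate point to be the order of the two limiting operations: the supremum over $(x,\bar x)$ must be taken \emph{after} letting $m\to\infty$ in the frozen difference quotient, so that the uniform seminorm bound $[u_l-u_m]_\alpha\le\eps$ is preserved in the limit and transfers to $[u_l-u]_\alpha$. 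Everything else is a routine verification of the norm axioms.
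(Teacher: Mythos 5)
Your proposal is correct and follows essentially the same route as the paper: reduce to Cauchy-ness in the sup-norm to obtain the uniform limit $u$, then control $[u_l-u]_\alpha$ by freezing a pair $x\neq\bar x$, exploiting the uniform bound $[u_l-u_m]_\alpha\le\eps$ together with pointwise convergence at the two frozen points, and only afterwards taking the supremum over pairs. The paper implements this last step with an $\tfrac{\eps}{3}$-splitting and a pair-dependent index $\bar l$ rather than an explicit limit $m\to\infty$, and deduces $u\in C^\alpha(\Omega,Y)$ from the bound $[u]_\alpha\le 1+\sup_{l\in\N}[u_l]_\alpha$ instead of your $u=u_N-(u_N-u)$, but these are only cosmetic differences.
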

\begin{proof}
	We merely show the completeness of $C^\alpha(\Omega,Y)$ w.r.t.\ the norm $\norm{\cdot}_\alpha$ for $\alpha\in(0,1]$. Thereto, let $(u_l)_{l\in\N}$ be a Cauchy sequence in $C^\alpha(\Omega,Y)$. Since $C^0(\Omega,Y)$ is complete in the $\sup$-norm, $(u_l)_{l\in\N}$ converges to a continuous function $u:\Omega\to Y$. It remains to show that $\lim_{l\to\infty}[u_l-u]_\alpha=0$ and that $u$ is $\alpha$-H\"older. Thereto, for $\eps>0$ first choose $L\in\N$ such that $\norm{u_l-u_m}_\alpha\leq\tfrac{\eps}{3}$ for all $l,m\geq L$, and given $x,\bar x\in\Omega$, $x\neq\bar x$, choose a fixed $\bar l\geq L$ such that
	$\norm{u_{\bar l}(x)-u(x)}\leq\tfrac{\eps}{3d(x,\bar x)^\alpha}$ and 
	$\norm{u_{\bar l}(\bar x)-u(\bar x)}\leq\tfrac{\eps}{3d(x,\bar x)^\alpha}$. Now this results in
	\begin{align*}
		\frac{\norm{(u_l-u)(x)-(u_l-u)(\bar x)}}{d(x,\bar x)^\alpha}
		&\leq
		\frac{\norm{(u_l-u_{\bar l})(x)-(u_l-u_{\bar l})(\bar x)}}{d(x,\bar x)^\alpha}+
		\frac{\norm{u_{\bar l}(x)-u(x)}}{d(x,\bar x)^\alpha}+
		\frac{\norm{u_{\bar l}(\bar x)-u(\bar x)}}{d(x,\bar x)^\alpha}\\
		&\leq
		[u_l-u_{\bar n}]_\alpha+\tfrac{2\eps}{3}
		\leq
		\eps\fall l\geq L
	\end{align*}
	and therefore $[u_l-u]_\alpha\leq\eps$. If we set $\eps=1$ in the above inequality and note that $([u_l]_\alpha)_{l\in\N}$ is bounded, then the generalized triangle inequality guarantees
	\begin{align*}
		\frac{\norm{u(x)-u(\bar x)}}{d(x,\bar x)^\alpha}
		\leq
		1+
		\frac{\norm{u_l(x)-u_l(\bar x)}}{d(x,\bar x)^\alpha}
		\leq
		1+[u_l]_\alpha
		\leq
		1+\sup_{l\in\N}[u_l]_\alpha\fall x,\bar x\in\Omega
	\end{align*}
	and consequently $u\in C^\alpha(\Omega,Y)$. 
\end{proof}
\begin{rem}\label{remark1}
	(1) The positive homogeneity $[\lambda u]_\alpha=\abs{\lambda}[u]_\alpha$ for $\lambda\in\K$ and \tref{thmsum} guarantee that $[\cdot]_\alpha$ defines a semi-norm on $C^\alpha(\Omega,Y)$. It is not a norm, since $[\cdot]_\alpha$ vanishes on the constant functions. However, if $\alpha\in(0,1]$ and $x_0\in\Omega$ is fixed, then 
	$
		\norm{u}_\alpha':=\max\set{\abs{u(x_0)},[u]_\alpha}
	$
	defines an equivalent norm on $C^\alpha(\Omega,Y)$. Indeed, any globally bounded $\alpha$-H\"older function $u:\Omega\to Y$ satisfies \eqref{appstar} for all $x\in\Omega$, which implies the inequality $\norm{u}_0\leq\norm{u(x_0)}+[u]_\alpha(\diam\Omega)^\alpha$ and consequently
	\begin{align*}
		\norm{u}_\alpha'
		&\leq
		\norm{u}_\alpha
		\leq
		\max\set{\norm{u(x_0)}+[u]_\alpha(\diam\Omega)^\alpha,[u]_\alpha}\\
		&\leq
		\max\set{\norm{u(x_0)},[u]_\alpha}
		+
		\max\set{(\diam\Omega)^\alpha,1}[u]_\alpha
		\leq
		(1+\max\set{(\diam\Omega)^\alpha,1})\norm{u}_\alpha'
	\end{align*}
	guarantees that both norms are equivalent. 

	(2) Let $\Omega$ be compact. Then $C^\alpha(\Omega)_+:=\set{u\in C^\alpha(\Omega,\R):\,0\leq u(x)\text{ for all }x\in\Omega}$ is an order cone in $C^\alpha(\Omega)$ with nonempty interior. However, if $\Omega$ has at least one accumulation point, then $C^\alpha(\Omega)_+$ is not normal (see \cite{amann:76}). 
\end{rem}

The following result establishes that $\alpha$-H\"older functions on bounded sets form a decreasing scale of spaces between the Lipschitz continuous and the uniformly continuous functions (cf.~\tref{thm1}).
\begin{lem}\label{lemembed}
	Let $\Omega$ be bounded. If $0\leq\alpha\leq\beta\leq 1$, then $\beta$-H\"older functions are $\alpha$-H\"older and satisfy $$[u]_\alpha\leq(\diam\Omega)^{\beta-\alpha}[u]_\beta$$. 
\end{lem}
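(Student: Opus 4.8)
The plan is to reduce the claim to a one-line pointwise estimate after first separating the degenerate exponents. I would begin by disposing of the boundary cases: if $\beta=0$ then necessarily $\alpha=0$ and there is nothing to prove, while if $\alpha=0<\beta$ the assertion only states that a $\beta$-H\"older function is continuous, i.e.\ $0$-H\"older, which is precisely \tref{thm1} (here the factor $(\diam\Omega)^{\beta-\alpha}[u]_\beta$ is irrelevant, as $[u]_0$ is undefined). Thus the substantive case is $0<\alpha\leq\beta\leq 1$, where I assume $u$ is $\beta$-H\"older, so that $[u]_\beta<\infty$.

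For this main case the key observation is the factorization, valid for all $x,\bar x\in\Omega$ with $x\neq\bar x$,
\begin{displaymath}
	\frac{\norm{u(x)-u(\bar x)}}{d(x,\bar x)^\alpha}
	=
	\frac{\norm{u(x)-u(\bar x)}}{d(x,\bar x)^\beta}\,d(x,\bar x)^{\beta-\alpha}
	\leq
	[u]_\beta\,d(x,\bar x)^{\beta-\alpha}.
\end{displaymath}
Since $\Omega$ is bounded one has $d(x,\bar x)\leq\diam\Omega<\infty$, and because the exponent satisfies $\beta-\alpha\geq 0$ the map $t\mapsto t^{\beta-\alpha}$ is nondecreasing on $\R_+$; hence $d(x,\bar x)^{\beta-\alpha}\leq(\diam\Omega)^{\beta-\alpha}$. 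Substituting this bound and then passing to the supremum over all pairs $x\neq\bar x$ shows both that the left-hand side is finite (so $u$ is $\alpha$-H\"older) and that the stated inequality $[u]_\alpha\leq(\diam\Omega)^{\beta-\alpha}[u]_\beta$ holds.

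There is essentially no hard step here; the only points requiring care are exactly where the hypotheses enter. Boundedness of $\Omega$ is used solely to guarantee $\diam\Omega<\infty$, so that the bound is nonvacuous, and the ordering $\alpha\leq\beta$ is used to make the exponent $\beta-\alpha$ nonnegative, which is what renders $t\mapsto t^{\beta-\alpha}$ monotone increasing; for $\alpha>\beta$ the factor would blow up as $d(x,\bar x)\searrow 0$ and no bound of this type could hold. Finally, the degenerate situation $\diam\Omega=0$ (a single point) is harmless: the supremum defining $[u]_\alpha$ is then taken over the empty set and equals $0$, which is consistent with the right-hand side under the convention $0^0=1$ in the case $\alpha=\beta$.
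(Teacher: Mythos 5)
Your proof is correct and follows essentially the same route as the paper: factor $d(x,\bar x)^{-\alpha}=d(x,\bar x)^{-\beta}d(x,\bar x)^{\beta-\alpha}$, bound $d(x,\bar x)^{\beta-\alpha}\leq(\diam\Omega)^{\beta-\alpha}$ using boundedness of $\Omega$ and $\beta-\alpha\geq 0$, and pass to the supremum. Your extra care with the degenerate exponents $\alpha=0$ or $\beta=0$ and with $\diam\Omega=0$ is consistent with the paper's conventions but not needed for the core argument.
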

\begin{proof}
	Given $u\in C^\beta(\Omega,Y)$ one has
	$$
		\norm{u(x)-u(\bar x)}
		\leq
		d(x,\bar x)^{\beta-\alpha}[u]_\beta d(x,\bar x)^\alpha
		\leq
		(\diam\Omega)^{\beta-\alpha}[u]_\beta d(x,\bar x)^\alpha
		\fall x,\bar x\in\Omega		
	$$
	yielding the assertion. 
\end{proof}
\begin{thm}[continuous embeddings]\label{thmembed}
	Let $\Omega$ be bounded. If $0\leq\alpha\leq\beta\leq 1$, then $C^\beta(\Omega,Y)\subseteq C^\alpha(\Omega,Y)$ is a continuous embedding with
	$
		\norm{u}_\alpha
		\leq
		\max\set{1,(\diam\Omega)^{\beta-\alpha}}\norm{u}_\beta
	$
	for all $u\in C^\beta(\Omega,Y)$. 
\end{thm}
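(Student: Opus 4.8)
The plan is to recognize that the asserted embedding $C^\beta(\Omega,Y)\hookrightarrow C^\alpha(\Omega,Y)$ is nothing but the identity map $u\mapsto u$, which is linear; hence its continuity is equivalent to the claimed operator-norm bound, and the entire statement reduces to two tasks: (i) the set-theoretic inclusion and (ii) the norm estimate. Both will follow almost immediately from the previously established \lref{lemembed}, so there is essentially no new content to generate.

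First I would settle the inclusion. Given $u\in C^\beta(\Omega,Y)$, the function $u$ is globally bounded and $\beta$-H\"older. For $\alpha\in(0,1]$, \lref{lemembed} shows that $u$ is also $\alpha$-H\"older, and since it remains globally bounded we conclude $u\in C^\alpha(\Omega,Y)$. For the boundary case $\alpha=0$ the H\"older seminorm plays no role, and instead \tref{thm1} guarantees that $u$ is (uniformly) continuous, so again $u\in C^0(\Omega,Y)$. This yields $C^\beta(\Omega,Y)\subseteq C^\alpha(\Omega,Y)$.

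Next I would prove the norm estimate, distinguishing the two regimes forced by the piecewise definition of $\norm{\cdot}_\alpha$. Since $\norm{u}_\beta=\max\set{\norm{u}_0,[u]_\beta}$, one always has both $\norm{u}_0\leq\norm{u}_\beta$ and $[u]_\beta\leq\norm{u}_\beta$. In the case $\alpha=0$ this already gives $\norm{u}_0\leq\norm{u}_\beta\leq\max\set{1,(\diam\Omega)^{\beta}}\norm{u}_\beta$, because the maximum on the right is at least $1$. In the case $\alpha\in(0,1]$ I would combine $\norm{u}_0\leq\norm{u}_\beta$ with the H\"older-constant bound $[u]_\alpha\leq(\diam\Omega)^{\beta-\alpha}[u]_\beta\leq(\diam\Omega)^{\beta-\alpha}\norm{u}_\beta$ from \lref{lemembed} and pass to the maximum, obtaining
\begin{displaymath}
	\norm{u}_\alpha
	=
	\max\set{\norm{u}_0,[u]_\alpha}
	\leq
	\max\set{1,(\diam\Omega)^{\beta-\alpha}}\norm{u}_\beta.
\end{displaymath}
This is exactly the asserted inequality, and by linearity of the inclusion it certifies that the embedding is bounded, hence continuous.

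I do not expect a genuine obstacle here: the substance is entirely carried by \lref{lemembed}. The only point requiring care is the case distinction $\alpha=0$ versus $\alpha\in(0,1]$, since $\norm{\cdot}_\alpha$ is defined differently in the two regimes and the seminorm estimate of \lref{lemembed} is vacuous when $\alpha=0$. Handling the two cases separately, and invoking $\max\set{1,(\diam\Omega)^{\beta-\alpha}}\geq 1$ in the purely continuous case, removes any difficulty.
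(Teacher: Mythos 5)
Your proposal is correct and follows essentially the same route as the paper: both reduce everything to \lref{lemembed} for the seminorm bound $[u]_\alpha\leq(\diam\Omega)^{\beta-\alpha}[u]_\beta$ and then pass to the maximum defining $\norm{\cdot}_\alpha$. Your explicit case split for $\alpha=0$ versus $\alpha\in(0,1]$ is a minor refinement the paper leaves implicit, but it adds no new substance.
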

In order words, one has a bounded embedding operator
\begin{equation}
	\sI_\beta^\alpha:C^\beta(\Omega,Y)\to C^\alpha(\Omega,Y)
	\fall 0\leq\alpha\leq\beta\leq 1.
	\label{noe}
\end{equation}
\begin{proof}
	If $u\in C^\beta(\Omega,Y)$, then \lref{lemembed} implies $[u]_\alpha<\infty$ and thus
	$$
		\max\set{\norm{u}_0,[u]_\alpha}
		\leq
		\max\set{\norm{u}_0,(\diam\Omega)^{\beta-\alpha}[u]_\beta}
		\leq
		\max\set{1,(\diam\Omega)^{\beta-\alpha}}\norm{u}_\beta
	$$
	yields that $u$ is also $\alpha$-H\"older and satisfies the claimed estimate. 
\end{proof}
\begin{rem}[differentiable and Sobolev functions]
	Let $\Omega\subset\R^\kappa$ be open and bounded.
	
	(1) If $\Omega$ is convex and $\bar C^1(\Omega,Y)$ denotes the (canonically normed) space of continuously differentiable functions allowing a continuous extension to $\overline{\Omega}$, then one has the continuous embedding \cite[pp.~11--12, 1.34~Thm.]{adams:fournier:02}
	\begin{equation}
		\bar C^1(\Omega,Y)\subseteq C^\alpha(\overline{\Omega},Y)\fall 0\leq\alpha\leq 1.
		\label{embed1}
	\end{equation}

	(2)	Let $\dim Y<\infty$. If $\Omega$ has a Lipschitz boundary and $k\in\N$, $p\geq 1$ satisfy $(k-\alpha)p\geq\kappa$, then the Sobolev space $W^{k,p}(\Omega,Y)$ satisfies the following continuous embedding \cite{adams:fournier:02} 
	\begin{equation}
		W^{k,p}(\Omega,Y)\subseteq C^\alpha(\overline{\Omega},Y)\fall 0<\alpha\leq 1.
		\label{embed2}
	\end{equation}
\end{rem}

For H\"older exponents $0<\alpha<\beta\leq 1$ the inclusion from Thm.~\ref{thm10} will typically be strict. 
\begin{ex}
	Let $\Omega=[0,1]$ and $Y=\R$. In case $\alpha\in(0,1)$ the function $u:[0,1]\to\R$, $u(x):=x^\alpha$ is contained in $C^\alpha[0,1]$. Now if $u$ would be $\beta$-H\"older with exponent $\beta>\alpha$, then there exists a $C\geq 0$ such that $x^\alpha=\abs{u(x)-u(0)}\leq C\abs{x-0}^\beta=Cx^\beta$ for $x\in(0,1]$ yielding the contradiction $C\geq x^{\alpha-\beta}\xrightarrow[x\searrow 0]{}\infty$. Concerning the case $\alpha=0$ we refer to \eref{exnothoelder} for a continuous function not being H\"older. 
\end{ex}

On the space $C^\alpha(\Omega,Y)$ exist several measures of noncompactness, which even are not necessarily equivalent (cf.~\cite[Sect.~5]{paret:nussbaum:11}). Among them, and for finite-dimensional spaces $Y$, we use 
\begin{displaymath}
	\chi(B):=\lim_{\eps\searrow 0}\sup_{u\in B}\set{\tfrac{\norm{u(x)-u(\bar x)}}{d(x,\bar x)^\alpha}:\,0<d(x,\bar x)\leq\eps}
\end{displaymath}
(see \cite{banas:nalepa:13,banas:nalepa:16} and \cite{saiedinezhad:19}) and obtain a sufficient compactness criterion: 
\begin{thm}[compactness in $C^\alpha(\Omega,Y)$, {cf.~\cite[Thm.~4]{banas:nalepa:13}}]\label{thmaa}
	Let $\Omega$ be compact and $\dim Y<\infty$. A subset $B\subseteq C^\alpha(\Omega,Y)$ is relatively compact, provided the following two conditions hold: 
	\begin{itemize}
		\item[(i)] $B$ is bounded,

		\item[(ii)] for every $\eps>0$ there exists a $\delta>0$ such that for all $x,\bar x\in\Omega$ one has the implication
		\begin{displaymath}
			d(x,\bar x)\leq\delta
			\quad\Rightarrow\quad
			\norm{u(x)-u(\bar x)}\leq\eps d(x,\bar x)^\alpha\fall u\in B.
		\end{displaymath}
	\end{itemize}
\end{thm}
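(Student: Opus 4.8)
The plan is to exploit that $(C^\alpha(\Omega,Y),\norm{\cdot}_\alpha)$ is complete (see \tref{thmcomplete}), so that relative compactness of $B$ is equivalent to the property that every sequence in $B$ admits a subsequence converging in the norm $\norm{\cdot}_\alpha$. Hence I would start from an arbitrary sequence $(u_l)_{l\in\N}$ in $B$ and produce such a convergent subsequence in three stages.

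First, I would extract a candidate limit by means of the classical Arzel\`a--Ascoli theorem \cite[p.~31, Thm.~3.2]{martin:76}. By assumption (i) there is an $M\geq 0$ with $\norm{u_l}_\alpha\leq M$ for all $l\in\N$; in particular $[u_l]_\alpha\leq M$, which forces $\norm{u_l(x)-u_l(\bar x)}\leq M d(x,\bar x)^\alpha$ and thus uniform equicontinuity of the family, while $\norm{u_l}_0\leq M$ yields uniform boundedness. Since $\Omega$ is compact and $\dim Y<\infty$, Arzel\`a--Ascoli supplies a subsequence $(u_{k_l})_{l\in\N}$ and a limit $u\in C^0(\Omega,Y)$ with $\lim_{l\to\infty}\norm{u_{k_l}-u}_0=0$. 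Passing to the pointwise limit in $\norm{u_{k_l}(x)-u_{k_l}(\bar x)}\leq M d(x,\bar x)^\alpha$ then shows $[u]_\alpha\leq M$, so that $u\in C^\alpha(\Omega,Y)$ already lies in the correct space.

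The decisive step is to upgrade uniform convergence to convergence in $\norm{\cdot}_\alpha$, that is, to establish $\lim_{l\to\infty}[u_{k_l}-u]_\alpha=0$; I would do this by splitting the defining supremum according to whether $d(x,\bar x)$ is small or large. Fix $\eps>0$ and choose $\delta>0$ as in assumption (ii). For pairs with $0<d(x,\bar x)\leq\delta$, the bound in (ii) holds for every $u_{k_l}$, and passing to the pointwise limit it is inherited by $u$; hence the H\"older difference quotient of $u_{k_l}-u$ is at most $2\eps$ there. For pairs with $d(x,\bar x)>\delta$ I would instead estimate $\frac{\norm{(u_{k_l}-u)(x)-(u_{k_l}-u)(\bar x)}}{d(x,\bar x)^\alpha}\leq\frac{2\norm{u_{k_l}-u}_0}{\delta^\alpha}$, whose right-hand side tends to $0$ by uniform convergence and is therefore $\leq\eps$ for all $l$ beyond some $L\in\N$. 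Taking the supremum over both regimes gives $[u_{k_l}-u]_\alpha\leq 2\eps$ for all $l\geq L$.

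Combining this with $\norm{u_{k_l}-u}_0\to 0$ yields $\norm{u_{k_l}-u}_\alpha=\max\set{\norm{u_{k_l}-u}_0,[u_{k_l}-u]_\alpha}\to 0$, so the subsequence converges in $C^\alpha(\Omega,Y)$ and the relative compactness of $B$ follows. I expect the main obstacle to be the large-distance regime of the decisive step: there the H\"older difference quotient cannot be controlled through (ii), and one is forced to bound it by the sup-norm distance divided by $\delta^\alpha$ and to invoke uniform convergence. The small-distance regime, by contrast, is handled uniformly in $l$ by (ii) together with the observation that the limit $u$ inherits exactly the same modulus.
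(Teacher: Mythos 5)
Your proof is correct. Note, however, that the paper does not actually prove \tref{thmaa}: it is quoted from the literature (Bana{\'s}--Nalepa), where the result is obtained by showing that the quantity $\chi$ displayed just before the theorem behaves like a measure of noncompactness on $C^\alpha(\Omega,Y)$, so that bounded sets with $\chi(B)=0$ (which is exactly condition (ii)) are relatively compact. Your argument replaces this by a direct sequential-compactness verification: Arzel{\`a}--Ascoli extracts a uniformly convergent subsequence from the $\norm{\cdot}_\alpha$-bounded family, and the two-regime split of the H\"older quotient --- condition (ii) uniformly in $l$ for $d(x,\bar x)\leq\delta$, the crude bound $2\norm{u_{k_l}-u}_0/\delta^\alpha$ for $d(x,\bar x)>\delta$ --- upgrades uniform convergence to $\norm{\cdot}_\alpha$-convergence. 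This is the standard elementary route and all steps check out; it buys a self-contained proof at the price of not exhibiting the quantitative content (the comparison of $\chi$ with the Hausdorff measure of noncompactness) that the cited reference provides and that the paper alludes to in the surrounding discussion. Two minor remarks: the appeal to completeness of $C^\alpha(\Omega,Y)$ is not needed, since in any metric space a set is relatively compact as soon as every sequence in it has a subsequence converging in the ambient space; and your first stage derives equicontinuity from $[u_l]_\alpha\leq M$, which presupposes $\alpha\in(0,1]$ --- for $\alpha=0$ equicontinuity must instead be read off from (ii), whereupon the statement collapses to the Arzel{\`a}--Ascoli theorem itself, as the paper notes after the theorem.
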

For $\alpha=0$ this is essentially the sufficiency part of the Arzel{\`a}-Ascoli theorem \cite[p.~31, Thm.~3.2]{martin:76}, which establishes that (i) and (ii) characterize the relatively compact subsets of $C^0(\Omega,Y)$. 

\begin{thm}[compact embeddings]\label{thm10}
	Let $\Omega$ be compact and $\dim Y<\infty$. If $0\leq\alpha<\beta\leq 1$, then $C^\beta(\Omega,Y)\subseteq C^\alpha(\Omega,Y)$ is a compact embedding. Moreover, the embedding $C^\beta(\Omega,Y)\subseteq C^0(\Omega,Y)$ is even dense, provided one has $\Omega\subset\R^\kappa$. 
\end{thm}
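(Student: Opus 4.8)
The plan is to treat the two claims separately, drawing on the machinery already assembled in the appendix. For the compact embedding I would start from \tref{thmembed}, which guarantees that the inclusion operator $\sI_\beta^\alpha:C^\beta(\Omega,Y)\to C^\alpha(\Omega,Y)$ is well-defined and bounded; what remains is to show it sends bounded sets to relatively compact ones. So let $B\subseteq C^\beta(\Omega,Y)$ be bounded, say $\norm{u}_\beta\leq M$ for all $u\in B$, and verify the two hypotheses of the compactness criterion \tref{thmaa}. Condition (i), boundedness of $B$ in $C^\alpha(\Omega,Y)$, is immediate from the continuity of $\sI_\beta^\alpha$.

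The crux is condition (ii), and here the strict gap $\beta-\alpha>0$ does all the work. For any $u\in B$ and $x\neq\bar x$ one has the quotient estimate $\frac{\norm{u(x)-u(\bar x)}}{d(x,\bar x)^\alpha}\leq[u]_\beta\,d(x,\bar x)^{\beta-\alpha}\leq M\,d(x,\bar x)^{\beta-\alpha}$, using that $\Omega$ is compact and hence bounded. Given $\eps>0$, choosing $\delta:=(\eps/M)^{1/(\beta-\alpha)}$ (the case $M=0$ being trivial, as then $B\subseteq\set{0}$) forces $\norm{u(x)-u(\bar x)}\leq\eps\,d(x,\bar x)^\alpha$ whenever $d(x,\bar x)\leq\delta$, uniformly in $u\in B$. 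This is precisely (ii), so \tref{thmaa} yields relative compactness of $B$ and thus compactness of $\sI_\beta^\alpha$. A single estimate thereby also covers $\alpha=0$, where \tref{thmaa} reduces to the Arzel\`a--Ascoli theorem.

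For the density of $C^\beta(\Omega,Y)$ in $C^0(\Omega,Y)$ under the hypothesis $\Omega\subset\R^\kappa$, the idea is to approximate uniformly by smooth functions. Given $u\in C^0(\Omega,Y)$, I would first invoke the Tietze extension theorem (applied componentwise, since $\dim Y<\infty$) to obtain a continuous, compactly supported $\tilde u:\R^\kappa\to Y$ with $\tilde u|_\Omega=u$, and then mollify, setting $u_\sigma:=\tilde u\ast\rho_\sigma$ for a standard mollifier $\rho_\sigma$. Each $u_\sigma$ is smooth on $\R^\kappa$, so on the compact set $\Omega$ it has bounded derivative and is therefore Lipschitz; by \eref{exA2} (equivalently, the embedding \eqref{embed1}) its restriction lies in $C^\beta(\Omega,Y)$ for every $\beta\in(0,1]$. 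Since $\tilde u$ is uniformly continuous, $u_\sigma\to\tilde u$ uniformly as $\sigma\searrow 0$, whence $\norm{u_\sigma|_\Omega-u}_0\to 0$; picking $\sigma$ small enough completes the approximation.

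The main obstacle is conceptual rather than computational. In the compactness part everything collapses to the uniform smallness of the H\"older quotient extracted from the exponent gap $\beta-\alpha>0$, so no delicate estimate is needed beyond that single inequality. In the density part the subtlety is to ensure the approximants genuinely belong to $C^\beta(\Omega,Y)$: this is exactly why the Euclidean hypothesis $\Omega\subset\R^\kappa$ enters, enabling the extension-and-mollification construction, and why smoothness of $u_\sigma$ — which furnishes the Lipschitz, a fortiori $\beta$-H\"older, regularity via \eref{exA2} — is the correct property to invoke.
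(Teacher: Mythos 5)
Your compactness argument coincides with the paper's: both reduce to the criterion of \tref{thmaa} via the single estimate $\norm{u(x)-u(\bar x)}\leq C\,d(x,\bar x)^{\beta-\alpha}d(x,\bar x)^\alpha$, with the exponent gap $\beta-\alpha>0$ supplying the uniform smallness of the H\"older quotient; this part is correct and essentially identical. For the density statement you take a genuinely different route. The paper simply invokes the Stone--Weierstra{\ss} theorem: polynomials (applied componentwise, $\dim Y<\infty$) are dense in $C^0(\Omega,Y)$ over a compact $\Omega\subset\R^\kappa$ and, being continuously differentiable, are Lipschitz and hence $\beta$-H\"older there. You instead extend by Tietze, mollify, and observe that the smooth approximants are Lipschitz on the compact set and therefore lie in $C^\beta(\Omega,Y)$ by \eref{exA2} and \lref{lemembed}. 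Both arguments are correct and of comparable length once the standard facts are granted; the Stone--Weierstra{\ss} route is a one-line citation, while your mollification construction is more self-contained in spirit and would generalize to settings where a polynomial approximation theorem is less immediate (the componentwise reduction via $\dim Y<\infty$ is needed in either case). One minor point worth making explicit in your version: the Lipschitz bound for $u_\sigma|_\Omega$ comes from the mean value inequality applied on the convex hull of $\Omega$ (or a convex compact neighborhood), since $\Omega$ itself need not be convex; this is harmless because $u_\sigma$ is smooth on all of $\R^\kappa$.
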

This means that bounded subsets of $C^\beta(\Omega,Y)$ are relatively compact in $C^\alpha(\Omega,Y)$. In case $\alpha\in(0,1)$ the embedding \eqref{embed1} is compact. Similarly, for $(k-\alpha)p>\kappa$ also \eqref{embed2} is compact (see \cite[pp.~11--12, 1.34~Thm.]{adams:fournier:02}). 
\begin{proof}
	(I) Let $B\subseteq C^\beta(\Omega,Y)$ be bounded, that is, there exists a $C\geq 0$ such that $\norm{u}_\beta\leq C$ for all $u\in B$. This implies $\norm{u(x)}\leq C$ and
	$
		\norm{u(x)-u(\bar x)}
		\leq
		Cd(x,\bar x)^\beta
		\leq
		Cd(x,\bar x)^{\beta-\alpha}d(x,\bar x)^\alpha
	$
	for all $x,\bar x\in\Omega$ and $u\in B$, which guarantees that $B\subseteq C^\beta(\Omega,Y)$ satisfies the assumptions of \tref{thmaa}. Consequently, $B$ is a relatively compact subset of $C^\alpha(\Omega,Y)$. 
	
	(II) Referring to the Stone-Weierstra{\ss} theorem \cite[p.~218, Thm.~16.1]{dibenedetto:16} the polynomials over a compact $\Omega\subset\R^\kappa$ form a set of $\beta$-H\"older functions being dense in the continuous functions. 
\end{proof}
However, note that the embedding $C^\beta(\Omega,Y)\subseteq C^\alpha(\Omega,Y)$ is not dense for $0<\alpha<\beta\leq 1$. 
\begin{ex}
	Let $\Omega=[0,1]$, $Y=\R$ and $u\in C^\alpha[0,1]$ be given by $u(x):=x^\alpha$. Choose $v\in C^\beta[0,1]$ and consider the difference $u-v\in C^\alpha[0,1]$ satisfying
	\begin{displaymath}
		\frac{\abs{(u-v)(x)-(u-v)(0)}}{\abs{x-0}^\alpha}
		\geq
		\frac{\abs{u(x)-u(0)}}{\abs{x-0}^\alpha}
		-
		\frac{\abs{v(x)-v(0)}}{\abs{x-0}^\alpha}
		=
		1
		-
		\frac{\abs{x}^\beta}{\abs{x}^\alpha}
		\frac{\abs{v(x)-v(0)}}{\abs{x-0}^\beta}
		\xrightarrow[x\searrow 0]{}
		1.
	\end{displaymath}
	This implies that any function $v\in C^\beta[0,1]$ has $\alpha$-norm greater or equal to $1$ from $u$. 
\end{ex}

The final example demonstrates that $C^\alpha(\Omega,Y)$ is not separable. 
\begin{ex}
	Let $\Omega=[0,1]$, $Y=\R$ and for reals $c\in(0,1)$ define the $\alpha$-H\"older functions
	\begin{align*}
		u_c:[0,1]&\to\R,&
		u_c(x)&:=
		\begin{cases}
			0,&0\leq x\leq c,\\
			(x-c)^\alpha,&a<x\leq 1,
		\end{cases}
	\end{align*}
	where $\alpha\in(0,1]$. This implies the inequality
	\begin{align*}
		\norm{u_a-u_b}_\alpha
		&\geq
		[u_a-u_b]_\alpha
		=
		\sup_{\substack{x,\bar x\in[0,1]\\ x\neq\bar x}}\frac{(u_a-u_b)(x)-(u_a-u_b)(\bar x)}{\abs{x-\bar x}^\alpha}
		\geq
		\frac{\abs{(u_a-u_b)(b)-(u_a-u_b)(a)}}{\abs{b-a}^\alpha}\\
		&=
		\frac{(b-a)^\alpha}{\abs{b-a}^\alpha}
		=
		1\fall a,b\in(0,1)
	\end{align*}
	with the uncountable family $\set{u_c}_{c\in(0,1)}\subseteq C^\alpha[0,1]$. 
\end{ex}
\end{appendix}
\providecommand{\bysame}{\leavevmode\hbox to3em{\hrulefill}\thinspace}


\begin{thebibliography}{10}
	\bibitem{adams:fournier:02} R.\ Adams, J.\ Fournier, 
	\emph{Sobolev Spaces}, 
	Pure and Applied Mathematics, Elsevier, Amsterdam, 2002. 

	\bibitem{akhmerov:etal:92} R.~Akhmerov, M.~Kamenskij, A.~Potapov, A.~Rodkina, B.~Sadovskij, 
	{\em Measures of Noncompactness and Condensing Operators}, 
	Operator Theory: Advances and Applications~55, Birkh{\"a}user, Basel etc., 1992.

	\bibitem{amann:76} H.~Amann, 
	\emph{Fixed point equations and nonlinear eigenvalue problems in ordered Banach spaces}, 
	{SIAM} Review \textbf{18} (1976) 620--709. 

\bibitem{amar:jeribi:mnif:08} A.B.~Amar, A.~Jeribi, M.~Mnif, 
	\emph{Some fixed point theorems and applications to biological model}, 
	Numerical Functional Analysis and Optimization \textbf{29(1--2)} (2008) 1--23.

\bibitem{appell:81} J.\ Appell, 
	\emph{Implicit functions, nonlinear integral equations, and the measure of noncompactness of the superposition operator}, 
	J.\ Math.\ Anal.\ Appl.\ \textbf{83} (1981) 251--263. 



\bibitem{appell:etal:11} J.\ Appell, N.\ Guanda, N.\ Merentes, J.L.\ Sanchez, 
	\emph{Boundedness and continuity properties of nonlinear composition operators: A survey}, 
	Commun.\ Appl.\ Anal.\ \textbf{15(2--4)} (2011) 153--182.

\bibitem{appell:kalitvin:zabrejko:00} J.~Appell, A.~Kalitvin, P.~Zabrejko, 
	{\em Partial integral operators and integro-differential equations}, 
	Pure and Applied Mathematics. Marcel Dekker, New York etc., 2000.

\bibitem{appell:zabrejko:90} J.~Appell, P.\ Zabrejko, 
	\emph{Nonlinear superposition operators}, 
	University Press, Cambridge, 1990. 

\bibitem{atkinson:92} K.E.\ Atkinson, 
	\emph{A survey of numerical methods for solving nonlinear integral equations}, 
	J.\ Integral Equations Appl.\ \textbf{4(1)} (1992) 15--46.

\bibitem{banas:nalepa:13} J.~Bana{\'s}, R.\ Nalepa, 
	\emph{On the space of functions with growths tempered by a modulus of continuity and its applications}, 
	J.\ Funct.\ Spaces Appl.\ 2013, Article ID 820437, 13 p.\ (2013).

\bibitem{banas:nalepa:16} \bysame, 
	\emph{On a measure of noncompactness in the space of functions with tempered increments}, 
	J.\ Math.\ Anal.\ Appl.\ \textbf{435(2)} (2016) 1634--1651. 

\bibitem{bardaro:etal:03} C.~Bardaro, J.~Musielak, G.~Vinti, 
	\emph{Nonlinear integral operators and applications}, 
	Series in Nonlinear Analysis and Applications 9, Walter de Gruyter, Berlin, 2003.

\bibitem{chiappinelli:nugari:95} R.~Chiappinelli, R.~Nugari,
	\emph{The Nemitskii operator in H\"older spaces: Some necessary and sufficient conditions}, 
	J.\ Lond.\ Math.\ Soc., II.\ \textbf{51(2)} (1995) 365--372. 

\bibitem{cobzas:etal:19} S.~Cobza{\c{s}}, R.\ Miculescu, A.\ Nicolae, 
	\emph{Lipschitz functions}. 
	Lecture Notes in Math.~2241, Springer, Cham, 2019. 

\bibitem{cohn:80} D.~Cohn, 
	\emph{Measure Theory}.
	Birkh{\"a}user, Boston etc., 1980.

\bibitem{deimling:85} K.~Deimling, 
	\emph{Nonlinear functional analysis}, 
	Springer, Berlin etc., 1985.

\bibitem{dibenedetto:16} E.~DiBenedetto, 
	\emph{Real analysis} (2nd ed.), 
	Advanced Texts Basler Lehrb{\"u}cher, Birkh{\"a}user, New York, 2016.

\bibitem{drabek:75} P.~Dr{\'a}bek, 
	\emph{Continuity of Nemytskij's operator in H\"older spaces}, 
	Comment.\ Math.\ Univ.\ Carolinae \textbf{16} (1975) 37--57.

\bibitem{durdil:67} J.~Durdil, 
	\emph{On the differentiability of Urysohn and Nemyckii operators}, 
	Commentat.\ Math.\ Univ.\ Carol.\ \textbf{8(3)} (1967) 515--553. 

\bibitem{edmond:08} C.~Edmond, 
	\emph{An integral equation representation for overlapping generations in continuous time}, 
	J.\ Economic Theory \textbf{143} (2008) 596--609. 

\bibitem{fenyo:stolle:82} S.~Feny{\"o}, H.W.~Stolle, 
	\emph{Theorie und Praxis der linearen Integralgleichungen 1}, 
	VEB Deutscher Verlag der Wissenschaften, Berlin, 1982.

\bibitem{fiorenza:16} R.\ Fiorenza, 
	\emph{H\"older and Locally H\"older Continuous Functions and Open Sets of Class $C^k$, $C^{k,\lambda}$}, 
	Frontiers in Mathematics, Birkh\"auser, 2016. 



\bibitem{goebel:sachweh:99} M.\ Goebel, F.\ Sachweh, 
	\emph{On the autonomous Nemytskij operator in H\"older spaces}, 
	Z.\ Anal.\ Anwend.\ \textbf{18(2)} (1999) 205--229. 

\bibitem{gilbarg:trudinger:01} D.~Gilbarg, N.\ Trudinger, 
	\emph{Elliptic partial differential equations of second order}, 
	Grundlehren der mathematischen Wissenschaften 224, Springer, Berlin etc., 2001.

\bibitem{guo:lakshmikantham:liu:96} D.\ Guo, V.\ Lakshmikantham, X.\ Liu, 
	\emph{Nonlinear integral equations in abstract spaces}, 
	Mathematics and Its Applications 373, Kluwer, 1996. 

\bibitem{hackbusch:95} W.~Hackbusch, 
	\emph{Integral Equations -- Theory and Numerical Treatment}, 
	Birkh{\"a}user, Basel etc., 1995.

\bibitem{hardy:16} G.H.\ Hardy, 
	\emph{Weierstrass's non-differentiable function}, 
	Trans.\ Am.\ Math.\ Soc.~\textbf{17(3)} (1916) 301--325. 

\bibitem{huseyin:guseinov:18} N.~Huseyin, A.~Huseyin, K.~Guseinov, 
	\emph{Approximation of the set of trajectories of the nonlinear control systems with limited control resources}, 
	Mathematical Modelling and Analysis \textbf{23(1)} (2018) 152--166. 

\bibitem{kalton:04} N.J.\ Kalton,
	\emph{Spaces of Lipschitz and H\"older functions and their applications}, 
	Collect.\ Math.\ \textbf{55(2)} (2004) 171--217.

\bibitem{khatskevich:shoiykhet:94} V.~Khatskevich, D.~Shoiykhet, 
	\emph{Differentiable Operators and Nonlinear Equations}, 
	Operator Theory, Advances and Applications~66, Birkh\"auser, 1994. 

\bibitem{kot:schaeffer:86} M.~Kot, W.M.\ Schaffer, 
	\emph{Discrete-time growth-dispersal models}, 
	Math.\ Biosci.\ \textbf{80} (1986), 109--136.

\bibitem{krasnoselskii:etal:76} M.A.\ Krasnosel'skij, P.P.\ Zabreiko, E.I.\ Pustylnik, P.E.\ Sbolevskii, 
	\emph{Integral operators in spaces of summable functions}, Noordhoff, Leyden, 1976.

\bibitem{kress:14} R.~Kress, 
	\emph{Linear Integral Equations} (3rd ed.), 
	Applied Mathematical Sciences 82. Springer, Heidelberg etc., 2014.

\bibitem{krukowski:przeradzki:16} M.\ Krukowski, B.\ Przeradzki, 
	\emph{Compactness result and its applications in integral equations}, 
	J.\ Appl.\ Anal.\ \textbf{22(2)} (2016) 153--161. 

\bibitem{lang:93} S.~Lang, 
	\emph{Real and functional analysis}, 
	Graduate Texts in Mathematics~142, Springer, Berlin etc., 1993. 

\bibitem{lutscher:19} F.~Lutscher, 
	\emph{Integrodifference equations in spatial ecology}, 
	Interdisciplinary Applied Mathematics~49, Springer, Cham, 2019.

\bibitem{paret:nussbaum:11} J.\ Mallet-Paret, R.D.\ Nussbaum, 
	\emph{Inequivalent measures of noncompactness}, 
	Annali di Matematica \textbf{190} (2011) 453--488. 

\bibitem{martin:76} R.~Martin, 
	\emph{Nonlinear operators and differential equations in Banach spaces}, Pure and Applied Mathematics 11, 
	John Wiley \& Sons, Chichester etc., 1976.

\bibitem{matkowska:84} A.\ Matkowska, 
	\emph{On characterization of Lipschitzian operator of substitution in the class of H\"older's functions}, 
	Z.\ Nauk.\ Politech.\ \L{}{\'o}dz.\ Mat.\ \textbf{17} (1984) 81--85. 

\bibitem{matkowski:09} J.\ Matkowski,
	\emph{Uniformly continuous superposition operators in the Banach space of H\"older functions}, 
	J.\ Math.\ Anal.\ Appl.\ \textbf{359(1)} (2009) 56--61. 

\bibitem{misyurkeev:91} I.V.\ Misyurkeev, Yu.V.\ Nepomnyashchikh, 
	\emph{A criterion for the complete continuity of an Uryson operator}, 
	Sov.\ Math.\ \textbf{35(4)} (1991) 31--41.


\bibitem{nugari:93} R.~Nugari, 
	\emph{Further remarks on the Nemitskii operator in H\"older spaces}, 
	Comment.\ Math.\ Univ.\ Carolin.\ \textbf{34(1)} (1993) 89--95.

\bibitem{pachale:59} H.~Pachale, 
	\emph{\"Uber den Urysohnschen Integraloperator}, 
	Arch.\ Math.\ \textbf{10} (1959) 134--136.

\bibitem{pathak:18} H.K.~Pathak, 
	\emph{An Introduction to Nonlinear Analysis and Fixed Point Theory}, 
	Springer, Singapore, 2018. 

\bibitem{poluektov:polar:20} M.~Poluektov, A.~Polar, 
	\emph{Modelling non-linear control systems using the discrete Urysohn operator}, 
	J.\ Franklin Institute \textbf{357(6)} (April 2020) 3865--3892. 

\bibitem{poetzsche:18a} C.~P\"otzsche, 
	\emph{Numerical dynamics of integrodifference equations: Basics and discretization errors in a $C^0$-Setting}, 
	Appl.\ Math.\ Comput.\ \textbf{354} (2019) 422--443.

\bibitem{poetzsche:20} \bysame, 
	\emph{Uniform convergence of Nystr\"om discretizations on H\"older spaces}, 
	submitted (2021). 

\bibitem{precup:02} R.~Precup, 
	\emph{Methods in nonlinear integral equations}, 
	Springer, Dordrecht, 2002. 

\bibitem{saiedinezhad:18} S.\ Saiedinezhad, 
	\emph{Existence and asymptotically stable solution of a Hammerstein type integral equation in a H\"older space}, 
	Bull.\ Belg.\ Math.\ Soc.\ -- Simon Stevin \textbf{25(3)} (2018) 453--465. 

\bibitem{saiedinezhad:19} \bysame,
	\emph{On a measure of noncompactness in the Holder space $C^{k,\gamma}(\Omega)$ and its application}, 
	J.\ Comput.\ Appl.\ Math.\ \textbf{346} (2019) 566--571. 

\bibitem{vaeth:97} M.~V\"ath, 
	\emph{Complete continuity of the Uryson operator}, 
	Nonlinear Anal.\ (Theory Methods and Appl.) \textbf{30(1)} (1997) 527--534.

\bibitem{vaeth:00} \bysame, 
	\emph{Volterra and integral equations of vector functions}, 
	Marcel Dekker, New York, 2000.

\end{thebibliography}
\end{document}